\pdfoutput=1
\documentclass[10pt]{article}

\usepackage[hidelinks,colorlinks = true, citecolor = {teal}, breaklinks, backref=section, pagebackref=true, linktoc=all]{hyperref}

\usepackage{xcolor}

\usepackage[]{darkmode}	
\usepackage[numbers]{natbib} 

\usepackage[linesnumbered,lined,commentsnumbered,noend]{algorithm2e}
\SetKwComment{Comment}{\qquad/* }{ */}
\RestyleAlgo{ruled}
\DontPrintSemicolon
\SetAlgoSkip{}
\SetKwInOut{Input}{Input\hspace{-0.5em}}

\let\oldnl\nl
\newcommand{\nlnonumber}{\renewcommand{\nl}{\let\nl\oldnl}}
\SetAlCapFnt{\footnotesize}

\usepackage[utf8]{inputenc}
\usepackage[T1]{fontenc}
\usepackage{enumitem} 	
\usepackage{xkcdcolors}

\usepackage{geometry}
\usepackage{setspace}
\usepackage{parskip} 
\usepackage{graphicx}     
\usepackage{tcolorbox}    
\tcbuselibrary{breakable}
\usepackage{subcaption} 
\usepackage{wrapfig}	
\usepackage[font=footnotesize]{caption}
\usepackage{booktabs}	
\usepackage{multirow}
\usepackage{multicol}
\usepackage{orcidlink}	
\usepackage[color = xkcdWheat, bordercolor = xkcdWheat, textsize=tiny, textwidth=14mm]{todonotes}
\usepackage{soul}	
\setstcolor{xkcdRed}

\usepackage{amsfonts}
\usepackage{amsmath}
\usepackage{amssymb,amsthm}
\usepackage{mathtools} 
\usepackage{commath} 
\usepackage{xfrac} 
\usepackage{stmaryrd} 
\usepackage{dsfont}  
\usepackage{siunitx} 
\usepackage{arydshln} 
\usepackage{accents}	
\usepackage[toc,page]{appendix}
\AtBeginEnvironment{appendices}{\crefalias{section}{appendix}}

\usepackage[capitalise]{cleveref}	
\crefformat{equation}{#2(#1)#3}
\crefrangeformat{equation}{#3#1#4 to #5#2#6}
\crefmultiformat{equation}{#2(#1)#3}{ and~#2(#1)#3}{, #2(#1)#3}{, and~#2(#1)#3}

\theoremstyle{definition}
\newtheorem{definition}{Definition}[section]

\theoremstyle{remark}
\newtheorem{remark}{Remark}

\theoremstyle{definition}
\newtheorem{theorem}{Theorem}[section]
\newtheorem{proposition}[theorem]{Proposition}
\newtheorem{lemma}[theorem]{Lemma}
\newtheorem{corollary}{Corollary}[theorem]

\theoremstyle{definition}

\newtheorem{assumptions}{Assumptions}[section]

\makeatletter
\renewenvironment{proof}[1][\proofname]{\par
  \pushQED{\qed}%
  \normalfont \topsep0pt\relax 
  \trivlist
  \item[\hskip\labelsep
        \itshape
    #1\@addpunct{.}]\ignorespaces
}{%
  \popQED\endtrivlist\@endpefalse
}
\makeatother

\newtcolorbox{highlighter}{colback=xkcdGoldenrod, boxrule=0pt, sharp corners, boxsep=0pt, left=\fboxsep, right=\fboxsep, parbox=false, breakable}

\newcommand{\embh}[1]{%
\IfDarkModeTF{{\color{white!80!\thepagecolor}\textbf{#1}}}{{\color{black!80}\textbf{#1}}}%
}

\definecolor{algcolor}{rgb}{0.97,0.97,0.97} 
\definecolor{IMAcolor}{RGB}{187,39,113} 
\hypersetup{linkcolor = IMAcolor}
\colorlet{TocColor}{white!10!black}

\newcommand{\R}{\mathbb{R}}		
\newcommand{\C}{\mathbb{C}}		
\newcommand{\N}{\mathbb{N}}		

\newcommand{\Convh}{\mathbb{M}}	            

\newcommand{\llb}{\llbracket}
\newcommand{\rrb}{\rrbracket}
\newcommand{\bfu}{\mathbf{u}}
\newcommand{\bfv}{\mathbf{v}}

\newcommand{\rhob}{\pmb{\rho}}

\DeclareMathOperator{\diag}{diag}
\DeclareMathOperator{\dive}{div}

\newcommand{\Ones}[1][n]{\mathds{1}_{#1}}		
\newcommand{\into}{\int\limits_\Omega}			
\newcommand{\intdo}{\iint\limits_{\Omega\times\Omega}}		

\allowdisplaybreaks			
\usepackage{stackengine,scalerel}
\usepackage{dashbox}

\newcommand\dsquare{\ThisStyle{\ensurestackMath{%
  \stackinset{c}{}{c}{}{\scalebox{.45}{\color{\thepagecolor}$\SavedStyle\blacksquare$}}
  {\SavedStyle\blacksquare}}}}
  
\newcommand{\smallblacksquare}{\raisebox{.08\height}{$\scaleobj{0.5}{\blacksquare}$}}

\newcommand{\disc}{\raisebox{.05em}{$\bigcirc$}}
\newcommand{\smalldisc}{\scaleobj{0.9}{\newmoon}}
\newcommand\ddisc{\ThisStyle{\ensurestackMath{%
  \stackinset{c}{}{c}{}{\scalebox{.54}{\color{\thepagecolor}$\SavedStyle\newmoon$}}
  {\SavedStyle\scalebox{1.25}{\newmoon}}}}}

\usepackage[nointegrals]{wasysym} 
\usepackage{fontawesome} 

\title{Singularities in phase separation models: 
\\a spectral element  approach \\ for
the nonlocal Cahn--Hilliard equation}
\author{%
    Andrés Miniguano-Trujillo%
    \thanks{Maxwell Institute for Mathematical Sciences, The University of Edinburgh and Heriot-Watt University, Bayes Centre, Edinburgh, United Kingdom}
    \thanks{Department of Neuroimaging, King's College London, London, United Kingdom 
    (\texttt{Andres.Miniguano-Trujillo@kcl.ac.uk} \orcidlink{0000-0002-0877-628X})}
    \\
    Andrea Poiatti 
    \thanks{Dipartimento di Scienze Matematiche, Fisiche e Informatiche, Università di Parma, Parma, Italy
    (\texttt{andrea.poiatti@unipr.it} \orcidlink{0009-0000-2217-742X})}
        \\
    Maurizio Grasselli
    \thanks{Dipartimento di Matematica, Politecnico di Milano, Milano, Italy
    (\texttt{maurizio.grasselli@polimi.it} \orcidlink{0000-0003-2521-2926})}
    \\
    Benjamin D. Goddard\thanks{School of Mathematics and Maxwell Institute for Mathematical Sciences, The University of Edinburgh, Edinburgh, United Kingdom 
    (\texttt{b.goddard@ed.ac.uk} \orcidlink{0000-0002-8781-014X}, \texttt{j.pearson@ed.ac.uk} \orcidlink{0000-0002-6063-1766} )}
    \\
    John W. Pearson\footnotemark[4]
    }

\date{}

\begin{document}
\spacing{1.213}
\maketitle

\vspace{-1\baselineskip}

\begin{abstract}

    The nonlocal Cahn--Hilliard equation provides a natural extension of the classical model for phase separation by incorporating long-range interactions through a singular convolution kernel. While this formulation admits a rich existence and regularity theory, its numerical approximation remains challenging: discretisation of the nonlocal term leads to dense operators, and the singularity of the kernel requires special treatment in collocation-based schemes. In this work, we develop an efficient and error-controlled numerical framework for the nonlocal Cahn--Hilliard system with constant mobility, logarithmic potential, Newtonian interaction kernel, and no-flux boundary conditions. Our approach is based on a pseudospectral multishape method that accurately approximates the action of singular convolution operators. We present high-resolution numerical solutions for this nonlocal system of equations that can be achieved with limited computational resources.

\end{abstract}


%
%

\section{Introduction}\label{ch:NLCH_Intro}

The primary goal of this study is to develop an efficient numerical scheme to approximate solutions of the nonlocal Cahn--Hilliard system. In particular, we will focus on its variant with constant mobility, singular potentials, and two-component mixtures. Notwithstanding, our techniques will be general enough to be applicable to more wide-ranging systems. 

The Cahn--Hilliard system was proposed in \cite{Cahn1958} as a macroscopic model for the formation and evolution of microstructures during the phase separation of binary alloys. This process consists of an early stage where the spinodal decomposition takes place, and a coarsening process follows it. As a result, this process leads to the segregation of the system into spatial subdomains, where one of the constituents of the mixture prevails. 

In this work, we study the nonlocal Cahn--Hilliard (CH) system derived in \cite{Giacomin1996}, which describes the phase separation of a binary mixture in periodic settings. It can be understood as an instance of a generalised dynamical density functional theory (DDFT) system with no-flux boundary conditions. Specifically, we consider\footnote{The differentiability of \(\mathcal{F}\) should be understood in terms of the Gâteaux derivative and its Riesz representative; i.e., the gradient functional. 
}
\begin{equation}\label{eq:GNNFT_CH}
	\od{\rho}{t} = \nabla \cdot \del{ m(\rho) \nabla \frac{ \delta \mathcal{F}[\rho] }{ \delta \rho }  },
\end{equation}
where \(m\) is a mobility term, often constant or bounded but always non-negative, and the free energy \(\mathcal{F}\) is the difference between the \embh{internal} energy \( \mathcal{F}_{\text{int}} \) and an entropy term \(\mathcal{S}\):
\begin{align}
	\label{free}
	\mathcal{F}[\rho] 	&\coloneqq \mathcal{F}_{\text{int}}[\rho] - \mathcal{S}[\rho],
	\qquad
	\mathcal{F}_{\text{int}}[\rho]  \coloneqq  -\frac{1}{2} \intdo K(x-y) \rho(x) \rho(y) \dif x \dif y,
	\qquad
	\mathcal{S}[\rho] \coloneqq  -\into F(\rho) \dif x  ,
\end{align}
and also
\begin{equation}\label{eq:LogPotential}
	F: [-1,1] \ni s  \longmapsto 
	\frac{\theta}{2} \sbr{ (1+s) \log (1+s) + (1-s) \log (1-s)  } \in [ 0, \theta \, {\log 2} ],
\end{equation}
 where \(\theta >0\) is the temperature of the system. The free \embh{energy density}, also known as the \embh{logarithmic potential}, \(F\) is used to distinguish the pure states \( \pm 1\)\footnote{Here, we present the logarithmic potential as our baseline choice for the nonlocal CH system. However, we include a general existence framework that covers a general class of potentials such as the double-well potential \(F(s) = (s-1)^2\).}. Moreover,
\begin{equation}
\label{eq:DerivativesOfF}
	F': (-1,1) \ni s  \longmapsto  \frac{\theta}{2} \log \frac{1+s}{1-s} \in \R
	\qquad\text{and}\qquad
	F'': (-1,1) \ni s  \longmapsto   \frac{\theta}{1-s^2} \in [\theta,\infty).
\end{equation}
We delay for now the properties required by \(K\). 
%
%
Note that the nonlocal Cahn-Hilliard equation can be also seen as a suitable approximation of the classical local Cahn-Hilliard equation if one replaces the gradient square term in the free energy functional with its nonlocal counterpart (cf., e.g.,  \cite{Gal2017}, see also \cite{Scarpa2,Scarpa1} for a rigorous analysis of the relation between the classical local model and the nonlocal one).

 Let $\Omega\subset \R^d$ be a bounded domain with Lipschitz boundary, and set, for $T>0$, \( Q \coloneqq \Omega \times (0,T)\).  Evaluating the terms of \cref{eq:GNNFT_CH}, the general form of the system reads as follows:
\begin{equation}
	\label{PDS:NL-CH}
		\left\{
		\begin{aligned}
			\dod{\rho}{t} &= \dive \big( m(\rho) \nabla \mu \big)	\qquad	&&\text{in } Q,
			\\
			\mu &= F'(\rho) - K \star \rho							&&\text{in } Q,
			\\
			\partial_n \mu &= 0									&&\text{on } \partial\Omega \times (0,T),
			\\
			\rho(0) &= \rho_0									&&\text{in } \Omega.
		\end{aligned}
		\right.
\end{equation}
This partial differential equation system will be addressed as the \embh{nonlocal Cahn--Hilliard system}%
%
%
\footnote{
An alternative form is to fully expand the argument of the divergence form, which yields
\begin{equation}\label{eq:Expanded_CH}
	\od{\rho}{t} = \nabla \cdot \del{ m(\rho) F''(\rho) \nabla \rho  - m(\rho) \nabla K \star \rho  },
\end{equation}
see, e.g.,  \cite{Cavaterra2025a}  and references therein.
Also known as the \embh{nonlocal Cahn--Hilliard equation}, 
the right-hand side in \cref{eq:Expanded_CH} is a nonlinear diffusion term provided that \(mF''\) is strictly positive on \( (-1,1) \). The spatial convolution with a sufficiently smooth and fast-decaying kernel \(K\) models nonlocal aggregation. 
}.

Determining numerical solutions of \cref{PDS:NL-CH} leads to significant challenges. Specifically, the nonlocal nature of the convolution term \(K\), the nonlocal differential algebraic equation in \(\mu\), and the singularity of \(F'\) for pure states cause difficulties when employing standard numerical discretisation schemes.

The core contributions of this paper are summarised below:
\begin{itemize}[leftmargin=1.2em, itemsep=0.0em]
    \item We extend the mathematical analysis of the nonlocal Cahn--Hilliard model with singular potential $F$ to Lipschitz domains satisfying the cone condition and, exploiting the recent results on the strict separation property \cite{P}, we prove a convergence rate of each weak solution to the unique equilibrium also in three dimensions, which was not yet obtained in the literature.

    \item We develop a spectral element method to approximate the convolution against a singular kernel, which appears to be new in the literature. The approach is designed to be flexible to different kernels and domains.

    %
    \item We provide, for the nonlocal Cahn-Hilliard equation with a singular kernel $K$ and singular potential $F$, a detailed analytical study of the approximation scheme, establishing error estimates which are also computationally verified.

    \item We solve the nonlocal Cahn--Hilliard system and present extensive numerical tests, including a study of different parametric settings and examples.

    \item We provide open-source code for the above tests, available at  
\begin{center}
	\href{https://github.com/DDFT-Modelling/NLCH}{\texttt{https://github.com/DDFT-Modelling/NLCH}}
\end{center}
\end{itemize}

The rest of this paper is structured as follows. In \cref{ch:CH_WP_and_Setup}, we provide the existence and uniqueness theory for the nonlocal Cahn--Hilliard system \cref{PDS:NL-CH}  with constant mobility, together with some further refinement concerning the longtime behavior of trajectories. 
In \cref{ch:Spectral_Element_Newtonian_Potential}, we develop a spectral element method to numerically approximate the convolution operator of a kernel which is singular at the origin, yet integrable, through a decomposition technique.
We validate the approximation through several numerical experiments that showcase the effectiveness of the method. 
Later in \cref{ch:NLCH_Numerical_Experiments} we study several configurations of the system \cref{PDS:NL-CH} 
including scalings of the kernel, a set of initial conditions, a regularised potential for long-time approximations, and a linear combination of kernels.  
Finally, in \cref{ch:NLCH_Conclusions}, we present our conclusions.

\section{Existence theory and framework}\label{ch:CH_WP_and_Setup}

In this section, we discuss the background on the well-posedness of the nonlocal Cahn--Hilliard system \cref{PDS:NL-CH} for $d$-dimensional bounded domains, $d \in \{2,3\}$. In particular, we determine conditions for the convolution kernel \(K\), the potential \(F\), and the domain \(\Omega\) that yield a unique solution to the system. We then discuss some existing results about the well-posedness of strong solutions, together with the validity of the instantaneous strict separation property; i.e., the solution staying uniformly away from the pure phases $\pm 1$ from any positive time onwards. Then we give some results concerning the analysis of the long-time behavior of the solution, especially concerning the convergence to a unique equilibrium. Finally, we describe our setup; i.e., the choice of \( (K,F,\Omega) \) that will guide our numerical approximation methods.

\subsection{Well-posedness}\label{sec:CH_WP}

The existence and uniqueness of weak solutions of problem \cref{PDS:NL-CH} has been studied in the context of more general differential systems. 
A fixed point argument was used in \cite[Theorem 3.5]{Gajewski2003} for a particular choice of \(F\) and the mobility. Existence on the torus is studied in \cite[Theorem 4.1]{Giacomin1998} for globally continuously differentiable \(K\) and smooth \(F\). 
In \cite{Gal2017}, a general approach with some assumptions on the singular potential \(F\) and \(K\) is presented for the constant mobility case.
%

In what follows, we state some existence and uniqueness results mainly from \cite{Gal2017}. We discuss an extension of the results that follow from minor changes in the original proofs and where these have to be applied. 

\begin{assumptions}\label{assu:NLCH}
We focus our attention on an interaction kernel \(K\) and a singular potential \(F\) that satisfy
	\begin{enumerate}[label=(H.\arabic*),  itemsep=0.25em]
		\item \( K\in W_{\text{loc}}^{1,1} (\R^d) \) with \( K(x) = K(-x)\), for almost any $x\in\R^d$, \(d \in \{2,3\}\).
		
		\item \(F \in C[-1,1] \cap C^2(-1,1)\) such that
		\(
			\lim\limits_{s\to -1} F'(s) = -\infty
		\),
		\(
			\lim\limits_{s\to 1} F'(s) = \infty,
		\)
		and
		\(
			F''(s) \geq c_0 > 0.
		\)
	\end{enumerate}
\end{assumptions}

Let us further introduce the following notations and spaces: Let \( V = H^1(\Omega)\), and define the \embh{average value} of a function \( \overline{u} = \frac{1}{\mathcal{L}^d(\Omega)} \int\limits_\Omega u \dif x\), where $\mathcal L^d$ is the $d$-dimensional Lebesgue measure.


We are particularly interested in extending the Cahn--Hilliard theory to domains with corners as these naturally arise in the discretisation of differential equations, where computational meshes often involve polygonal or polyhedral elements. 

\begin{definition}[{\citealp[\S 1.1.9, Definition 2]{Mazya2011}}]
    A domain \(\Omega \subsetneq \R^d\) possesses the \embh{cone property} if each point of \(\Omega\) is the vertex of a cone contained in \(\Omega\) along with its closure, the cone being represented by the inequalities \( \sum\limits_{k=1}^{d-1} x_k^2 < b x_d^2 \) and \( x_d \in (0,a)\) in some Cartesian system with \( a,b\) being two positive constants\footnote{Notice that the constants are independent of the chosen vertex. This is also known as the uniform cone property.}.
\end{definition}

It can be proven that any bounded domain has the cone property if and only if it is a Lipschitz domain \cite[Theorem 1.2.2]{Grisvard2011a}. As a consequence, any bounded, open, and convex set is a Lipschitz domain.

Let us now state the following extension of the Poincaré--Wirtinger (also known as Poincaré--Steklov) inequality for Lipschitz domains:
\begin{proposition}[{\citealp[\S 1.1.11, p.\,20]{Mazya2011}, \citealp[Theorem 4.6.1]{Ziemer1989a}, \citealp[Part I, Lemma 3.24]{Ern2021a}}]\label{prop:PW}
	Let \(\Omega \subset \R^d\) be a bounded Lipschitz domain satisfying the cone condition and let \( p \in [1,\infty]\). Then there exists a constant \(C_{P}(p,\Omega)>0\) such that for every \( u \in W^{1,p}(\Omega)\) it holds:
	\[
		\| u - \overline{u} \|_{L^p(\Omega)} \leq C_P(p,\Omega) \| \nabla u \|_{ L^p(\Omega) }.
	\]
\end{proposition}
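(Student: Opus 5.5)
The plan is the classical compactness (contradiction) argument, treating $p\in[1,\infty)$ first and then $p=\infty$ separately. Suppose the inequality fails for some $p<\infty$. Then there is a sequence $u_k\in W^{1,p}(\Omega)$ with
\[
\|u_k-\overline{u_k}\|_{L^p(\Omega)} > k\,\|\nabla u_k\|_{L^p(\Omega)} .
\]
Setting $v_k := (u_k-\overline{u_k})/\|u_k-\overline{u_k}\|_{L^p(\Omega)}$, we obtain a sequence with
\[
\overline{v_k}=0,\qquad \|v_k\|_{L^p(\Omega)}=1,\qquad \|\nabla v_k\|_{L^p(\Omega)}<1/k .
\]
In particular $(v_k)$ is bounded in $W^{1,p}(\Omega)$.

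The key ingredient is the Rellich--Kondrachov theorem: the embedding $W^{1,p}(\Omega)\hookrightarrow L^p(\Omega)$ is compact for bounded domains with the cone property (Adams' formulation, for every $1\le p<\infty$). This is exactly where the cone condition (equivalently, the Lipschitz boundary) enters. Extracting a subsequence, $v_k\to v$ strongly in $L^p(\Omega)$. Consequently $\|v\|_{L^p}=1$, and $\overline v=0$ because $\Omega$ is bounded and $L^p\subset L^1$.

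Next I pass to the limit in the weak derivatives. For any $\varphi\in C_c^\infty(\Omega)$,
\[
\int_\Omega v\,\partial_i\varphi = \lim_k \int_\Omega v_k\,\partial_i\varphi = -\lim_k\int_\Omega \partial_i v_k\,\varphi = 0 .
\]
Hence $\nabla v=0$ in the distributional sense. Since a Lipschitz domain is connected (an implicit part of "domain"), $v$ is a.e. constant, and $\overline v=0$ forces $v=0$. This contradicts $\|v\|_{L^p}=1$. I expect the only delicate point to be the compactness step, specifically checking that the cited Rellich theorem applies for all $p\in[1,\infty)$ under merely the cone condition, including the endpoint $p=1$. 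It does apply in the cone-condition version, since $W^{1,1}\hookrightarrow L^q$ is compact for $q<d/(d-1)$ and in particular for $q=1$.

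For $p=\infty$ I will not use compactness. A bounded Lipschitz domain is a finite union of Lipschitz graph domains, and it is quasiconvex: any two points $x,y$ are joined by a rectifiable path in $\Omega$ of length at most $C_\Omega|x-y|$. Moreover $W^{1,\infty}(\Omega)$ functions are locally Lipschitz with constant $\|\nabla u\|_{L^\infty}$. Integrating along such paths gives
\[
|u(x)-u(y)|\le C_\Omega\operatorname{diam}(\Omega)\,\|\nabla u\|_{L^\infty(\Omega)} .
\]
Since $\overline u$ lies between $\operatorname{ess\,inf} u$ and $\operatorname{ess\,sup} u$, this yields $\|u-\overline u\|_{L^\infty}\le C_\Omega\operatorname{diam}(\Omega)\,\|\nabla u\|_{L^\infty}$. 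Alternatively, one can note that the constants $C_P(p,\Omega)$ obtained via the Riesz-potential representation on star-shaped pieces are uniform in $p$ and let $p\to\infty$. Either route finishes the proof.
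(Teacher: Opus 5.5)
Your proof is correct. Note that the paper does not prove this proposition at all: it is quoted from the cited monographs and used as a black box (e.g.\ for the constant $\widehat C$ in the convergence-rate lemma), so the comparison is with the standard literature rather than with an argument in the paper.

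For $p<\infty$ you give the classical compactness proof. It needs only the Rellich--Kondrachov embedding and connectedness, but it is non-constructive. It says nothing about how $C_P(p,\Omega)$ depends on $p$ or on the geometry of $\Omega$. That is why it cannot be pushed to the limit $p\to\infty$, and why you rightly handle $p=\infty$ separately.

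Constructive routes, such as integral representations on domains star-shaped with respect to a ball chained over a finite cover, give explicit constants that are uniform in $p$. This is the kind of information behind statements like $C_P=d\ell$ on a hypercube. Your quasiconvexity argument for $p=\infty$ is of this constructive type and yields $C_P(\infty,\Omega)\le C_\Omega\operatorname{diam}(\Omega)$.

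Two minor points:
\begin{itemize}
\item Since $\nabla u$ is only defined a.e., make ``integrating along paths'' rigorous. Cover the compact path by finitely many balls contained in $\Omega$, on each of which the continuous representative of $u$ is $\|\nabla u\|_{L^\infty}$-Lipschitz, and sum over a subdivision of the path.
\item The cone condition used in Rellich--Kondrachov is strictly weaker than Lipschitz regularity; the equivalence with Lipschitz boundary holds for Grisvard's \emph{uniform} cone property. This is harmless here, since the proposition assumes both hypotheses.
\end{itemize}
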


If \(\Omega\) is a hypercube with largest side-length \(\ell>0\) and \(u\) is continuously differentiable, it can be proven that the Poincaré constant in \cref{prop:PW} is \( C_{P}(p
,\Omega) = d \ell \) \cite[Theorem 3.1.2]{Krylov2023a}.


Additionally, we can present the following Gagliardo--Nirenberg inequality, which applies to bounded Lipschitz domains: 
\begin{proposition}\label{prop:GN}
	Let \(\Omega \subset \R^d\), with $d\in\{2,3\}$, be a bounded Lipschitz domain, and let \(u \in H^1(\Omega) \). Define the admissible range of $p$ as \( [2,\infty) \) for \(d = 2\), and \( [2,6] \) for \( d= 3\).
    Then there exists a constant $C_{p,\Omega} > 0$ such that:
    \[
        \|u\|_{L^p(\Omega)} 
        \leq 
        C_{p,\Omega} \|u\|_{L^2(\Omega)}^\alpha \|u\|_{H^1(\Omega)}^{1 - \alpha} ,
    \]
    where the exponent $\alpha$ and the constant $C_{p,\Omega}$ satisfy
    \begin{itemize}
        \item for $d = 2$: $\alpha = \sfrac{2}{p}$ and $C_{p,\Omega} = C_\Omega \sqrt{p}$, where $C_\Omega > 0$ is independent of $p$;
        
        \item for $d = 3$: $\alpha = \sfrac{3}{p} - \sfrac{1}{2}$, and $C_{p,\Omega} > 0$ may depend on both $p$ and $\Omega$.
    \end{itemize}
\end{proposition}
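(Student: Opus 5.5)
The plan is to reduce the inequality on \(\Omega\) to the whole space by extension, and then prove it on \(\R^d\) by Fourier analysis, tracking the \(p\)-dependence of the constant when \(d=2\).

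\emph{Step 1: extension.} Since \(\Omega\) is a bounded Lipschitz domain, we will use Stein's universal extension operator \(E\). It is bounded simultaneously from \(L^2(\Omega)\) to \(L^2(\R^d)\) and from \(H^1(\Omega)\) to \(H^1(\R^d)\), with constants depending only on \(\Omega\). Because of this, it suffices to prove the inequality for \(v=Eu\) on \(\R^d\):
\[
\|u\|_{L^p(\Omega)}\le \|Eu\|_{L^p(\R^d)}\le C\,\|Eu\|_{L^2(\R^d)}^\alpha\|Eu\|_{H^1(\R^d)}^{1-\alpha}\le C\,C_E\,\|u\|_{L^2(\Omega)}^\alpha\|u\|_{H^1(\Omega)}^{1-\alpha}.
\]
Any \(p\)-dependence therefore enters only through the whole-space constant.

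\emph{Step 2: \(d=3\).} The value \(\alpha=3/p-1/2\) is the exponent for which interpolating between \(L^2\) and \(L^6\) reproduces \(L^p\). We will apply Hölder (log-convexity of \(L^p\) norms), \(\|v\|_{L^p}\le\|v\|_{L^2}^{\alpha}\|v\|_{L^6}^{1-\alpha}\), and then the Sobolev embedding \(\|v\|_{L^6(\R^3)}\le C\|\nabla v\|_{L^2}\le C\|v\|_{H^1}\). The constant may depend on \(p\), as the statement allows, so this case is straightforward.

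\emph{Step 3: \(d=2\).} This is the main step, because we need the precise growth \(C\sqrt p\) uniformly in \(p\in[2,\infty)\). The plan is to work in Fourier variables.

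1. By Hausdorff–Young, \(\|v\|_{L^p}\le \|\hat v\|_{L^{p'}}\) with \(p'=p/(p-1)\in(1,2]\).
2. Split \(\hat v = (1+|\xi|^2)^{1/2}\hat v\cdot(1+|\xi|^2)^{-1/2}\) and apply Hölder with exponents \(2\) and \(q\), where \(1/p'=1/2+1/q\), i.e. \(q=2p/(p-2)\). This gives
\[
\|\hat v\|_{L^{p'}}\le \|v\|_{H^1}\,\|(1+|\xi|^2)^{-1/2}\|_{L^q(\R^2)}.
\]
3. Compute the remaining norm in polar coordinates:
\[
\|(1+|\xi|^2)^{-1/2}\|_{L^q}^q=\frac{2\pi}{q-2}=\frac{\pi(p-2)}{2}.
\]
Its \(q\)-th root stays bounded as \(p\to\infty\) and blows up as \(p\to 2\). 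This route alone does not yet produce the interpolation factor.

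To obtain both the interpolation structure and the \(\sqrt p\) constant, we will instead split frequencies at a radius \(R\).

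4. Write \(\|\hat v\|_{L^{p'}}\) as the sum of the contributions from \(|\xi|\le R\) and \(|\xi|>R\). Hölder on each region gives
\[
\|\hat v\|_{L^{p'}}\le \|\hat v\|_{L^2}\,|B_R|^{1/q}+\|\langle\xi\rangle\hat v\|_{L^2}\,\|\langle\xi\rangle^{-1}\|_{L^q(|\xi|>R)}.
\]
5. Estimate the tail: \(\|\langle\xi\rangle^{-1}\|_{L^q(|\xi|>R)}\lesssim (q-2)^{-1/q}R^{2/q-1}\). Since \(q-2=4/(p-2)\), the factor \((q-2)^{-1/q}\) grows like \(\sqrt p\) once the powers are tracked.
6. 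Choose \(R=\|v\|_{H^1}/\|v\|_{L^2}\ge1\). Then both terms become \(\|v\|_{L^2}^{2/p}\|v\|_{H^1}^{1-2/p}\), because \(2/q=1-2/p\).

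The obstacle is the bookkeeping of the \(p\)-dependence in this last step, namely verifying that \((q-2)^{-1/q}\cdot\text{const}\) is \(O(\sqrt p)\) uniformly in \(p\).

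If the Fourier route becomes messy, the fallback is the classical Trudinger–Yudovich argument. One applies the Ladyzhenskaya-type inequality \(\|w\|_{L^2}\le C\|w\|_{L^1}^{1/2}\|\nabla w\|_{L^1}^{1/2}\) to \(w=|v|^{p/2}\) and inducts in \(p\), tracking the constants. This also yields \(C_\Omega\sqrt p\), which is consistent with the exponential integrability of \(H^1\) functions in two dimensions.
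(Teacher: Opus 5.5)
Your argument is correct and follows the paper's route: the paper also extends $u$ from $\Omega$ to $\R^d$ and then cites the classical whole-space Gagliardo--Nirenberg inequalities, whereas you prove them (H\"older plus Sobolev for $d=3$, and for $d=2$ a Hausdorff--Young frequency split at $R=\|v\|_{H^1}/\|v\|_{L^2}$, which does give $C\sqrt{p}$), and you rightly make explicit that the extension operator must be bounded on $L^2$ and on $H^1$ simultaneously. Drop the fallback, though: as written, $\|w\|_{L^2}\le C\|w\|_{L^1}^{1/2}\|\nabla w\|_{L^1}^{1/2}$ is false on $\R^2$ (it fails for $w(\lambda\,\cdot)$ as $\lambda\to\infty$), and iterating the correct bound $\|w\|_{L^2}\le C\|\nabla w\|_{L^1}$ on $w=|v|^{p/2}$ only gives a constant of order $p$, not $\sqrt{p}$ (similarly, your remark in item~3 is reversed: that $q$-th root tends to $1$ as $p\to 2$ and grows like $\sqrt{p}$ as $p\to\infty$).
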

\begin{proof}
    Since $\Omega$ is a bounded Lipschitz domain, there exists a suitable function extension operator from $\Omega$ to $\R^d$; see for instance \cite[Proposition 2.3]{BM}. Therefore the results follow from the classical Gagliardo--Nirenberg's inequalities applied to $d\in\{2,3\}$; see, for instance, \cite[Theorem 2.44]{GN}. For a thorough exposition on the topic, see \cite{BM}.
\end{proof}

The existing proof of well-posedness of weak solutions to \cref{PDS:NL-CH} in \cite{Gal2017} under \cref{assu:NLCH} also assumes that \(m = 1\) and \(\Omega\) is at least of class \(C^1\). We can obviously generalise to any other constant \(m>0\) in \cref{PDS:NL-CH} through an appropriate scaling in time. As we need to consider a slightly less regular bounded domain \(\Omega\) for constructing numerical approximations, we can review the proof and extend it, for instance, to polygonal domains, which constitute a particular class of Lipschitz domains. In particular, \cref{prop:PW} replaces inequality (2.2) in \cite{Gal2017} and any other occurrences pertaining to the existence and uniqueness proofs that follow. Therefore, the existence and uniqueness of global-in-time weak solutions to \cref{PDS:NL-CH} are ensured in this case as well.

%
Additionally, the well-posedness of strong solutions is established analogously to \cite[Theorem 4.1]{AGGP} in the 2D case and \cite[Theorem 2.2]{PS} in 3D, both under zero advective velocity. Indeed, this proof is based on the compact Sobolev embedding $H^1(\Omega)\hookrightarrow L^2(\Omega)$, which clearly also holds in the case of bounded Lipschitz domains, as well as on \cref{prop:PW}. In the end, this also allows us to prove that any weak solution to the nonlocal Cahn--Hilliard equation instantaneously regularises initial data. We summarise all of these results next in \cref{known}. We refer the reader to \cite[Remark 3.4]{P} for some more precise references on the proof. All the results are here presented for the case of constant mobility $m=1$ for simplicity, but this can be easily generalized to general $m>0$ after suitable rescaling.

\begin{theorem}
	\label{known}
	Let \(\Omega \subset \R^d\), with $d\in\{2,3\}$, be a bounded Lipschitz domain satisfying the cone condition. Assume that \cref{assu:NLCH} hold, and also that $\rho_0\in L^\infty(\Omega)$ such that $\norm{\rho_0}_{L^\infty} \leq 1$ and $\norm{\overline{\rho}_0} < 1$. Assume also that the mobility $m=1$. Then, for any $T>0$, there exists a unique weak solution to \cref{PDS:NL-CH}, such that:
    \begin{subequations}
    \begin{align*}
        &\rho \in L^\infty \del{\Omega \times (0,T)}		\qquad \quad \text{ with } |\rho| < 1 \text{ almost everywhere in } \Omega \times (0,T),
        \\
		&\rho \in L^2(0,T;V) \cap H^1 (0,T; L^2(\Omega)),
		\\
		&\mu \in L^2(0,T; V),
        \hspace{4.75em}
        F'(\rho) \in L^2(0,T; V),
	\end{align*}
	and the following identities are satisfied:
	\begin{align}
        & \langle \partial_t \rho, v \rangle + (\nabla\mu,\nabla v) = 0
        &&\hspace{-2.5cm} \forall v \in V,\qquad \text{a.e. in }(0,T),\label{phi}
        \\
        & \mu = F'(\rho) - K \star \rho 
        &&\hspace{-2.5cm} \text{a.e. in }\Omega \times (0,T),
        \notag
	\end{align}
	together with $\rho(0)=\rho_0$ in $\Omega$.
	The weak solution also satisfies the energy identity
	\begin{equation}
    \label{dissipative}
        \mathcal{F}\del[0]{ \rho(t) } + \int\limits_s^{t} \norm{ \nabla\mu(\tau) }_{L^2(\Omega)}^2 \dif\tau
        = \mathcal{F}\del[0]{\rho(s)},
        \qquad \forall\ 0\leq s\leq t<\infty,
	\end{equation}
    where $\mathcal F$ is the free energy defined in \cref{free}. Moreover, for any $\tau\in (0,T)$:
	\begin{align}
        &\sup_{t\geq \tau} \, \norm{ \partial_t \rho(t) }_{V^\prime} + \sup_{t\geq \tau} \, \norm{ \partial_t \rho(t) }_{L^2(t, t+1;  L^2(\Omega))} \leq \frac{C_0}{\sqrt{\tau}},
        \notag
        \\
        &
        \sup_{t\geq \tau} \, \norm{ \mu(t) }_{V} + \sup_{t\geq \tau} \, \norm{ \rho(t) }_{V} \leq \frac{C_0}{\sqrt{\tau}},
        \label{H1}
        \\
        &
        \norm{ F'(\rho) }_{L^\infty(\tau,t;V)} \leq C_1,
        \qquad \forall t\geq \tau,
        \notag
	\end{align}
	where the positive constant $C_0$ depends only on the initial datum energy $\mathcal F(\rho_0)$, $\overline{\rho}_0$, $\Omega$, $T$, and the parameters of the system, whereas $C_1 = C_1(\tau)$ also depends on $\tau$.
    \end{subequations}

    Finally, we have the following continuous dependence estimate: for any two weak solutions \(\rho_1\) and \(\rho_2\) to \cref{PDS:NL-CH} on the interval \([0, T]\) with \(T > 0\), corresponding to the initial data \(\rho_{0,1}\) and \(\rho_{0,2}\), respectively, it holds for all \(t \in [0, T]\) that:
    \[
        \norm{ \rho_1(t) - \rho_2 (t) }_{ V^\prime }^2 
        \leq
        \norm{ \rho_{0,1} - \rho_{0,2} }^2_{V^\prime} 
        + 
        C_2 \, \abs[1]{ \overline{\rho}_{0,1} - \overline{\rho}_{0,2} } e^{C_3 T},
    \]
    where \(C_3 > 0\) is a constant and \(C_2\) is a constant that depends on \( \norm{ F'(\rho_1) }_{L^1 \del{0,T; L^1(\Omega)}} + \norm{ F'(\rho_2) }_{L^1 \del{0,T; L^1(\Omega)}} \).
\end{theorem}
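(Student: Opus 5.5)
The plan is to follow the proofs of \cite{Gal2017} (weak solutions), \cite[Theorem 4.1]{AGGP} and \cite[Theorem 2.2]{PS} (regularisation), checking at each step that only properties valid on bounded Lipschitz domains with the cone condition are used. Those properties are the Poincaré--Wirtinger inequality of \cref{prop:PW}, the compact embedding $H^1(\Omega)\hookrightarrow L^2(\Omega)$, the Gagliardo--Nirenberg inequality of \cref{prop:GN}, and Young's inequality for the convolution $K\star\rho$. For the last one, (H.1) and the boundedness of $\Omega$ give $K,\nabla K\in L^1(B_R)$ with $R=\operatorname{diam}\Omega$. Hence $\|K\star u\|_{W^{1,p}}\le C\|u\|_{L^p}$ for every $p\in[1,\infty]$, with no regularity of $\partial\Omega$ needed.

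\textbf{Existence.} I regularise $F$ by a family $F_\lambda\in C^2(\R)$ of Moreau--Yosida-type approximations. Each $F_\lambda$ has $F_\lambda''\ge c_0$ and quadratic growth. I then run a Faedo--Galerkin scheme with the Neumann eigenfunctions of $-\Delta$ on $\Omega$; these exist on Lipschitz domains via the compact resolvent on $V$.

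Testing with $\mu$ and $\partial_t\rho$ gives the energy identity \cref{dissipative} at the approximate level. Conservation of mass $\overline{\rho}(t)=\overline{\rho}_0$ follows by testing \cref{phi} with $v=1$.

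The key estimate is a bound on $\overline{\mu}$, hence on $\|\mu\|_V$ via \cref{prop:PW}. I obtain it by testing $\mu$ against $\rho-\overline{\rho}_0$ and using the inequality $F'(s)(s-\overline{\rho}_0)\ge c|F'(s)|-C$, which holds because $|\overline{\rho}_0|<1$. This yields $\|F'(\rho)\|_{L^1}\le C(1+\|\nabla\mu\|)$.

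Passage to the limit uses Aubin--Lions, via the compact embedding above, together with maximal monotonicity of $\partial F$. The same limit shows $|\rho|<1$ a.e., since $F'(\rho)\in L^1$.

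\textbf{Uniqueness and continuous dependence.} I take the difference $\rho=\rho_1-\rho_2$ and test with $\mathcal N(\rho-\overline{\rho})$, where $\mathcal N$ is the inverse Neumann Laplacian on zero-mean functions. It is well defined on Lipschitz domains by Lax--Milgram and \cref{prop:PW}. Monotonicity of $F'$ together with $F''\ge c_0$ absorbs the convolution term, which is bounded via $\|\nabla K\star\rho\|\le C\|\rho\|$.

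The mean difference $\overline{\rho}_{0,1}-\overline{\rho}_{0,2}$ produces a term $|\overline{\rho}_{0,1}-\overline{\rho}_{0,2}|\,\|F'(\rho_1)-F'(\rho_2)\|_{L^1}$. This is why $C_2$ depends on the $L^1$ norms of $F'(\rho_i)$. Gronwall's inequality then gives the stated estimate.

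\textbf{Instantaneous regularisation \cref{H1}.} I differentiate the equation in time (formally, or via difference quotients $\partial_t^h$) and test with $\mathcal N\partial_t\rho$. Using $F''\ge c_0$, this gives
\[
\tfrac{d}{dt}\|\partial_t\rho\|_{V'}^2+c_0\|\partial_t\rho\|^2\le C\|\partial_t\rho\|_{V'}^2 .
\]
Multiplying by $t$ and using $\int_0^T\|\partial_t\rho\|_{V'}^2\le C$ from the energy identity gives the $C_0/\sqrt{\tau}$ bounds on $\partial_t\rho$.

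The bound on $\mu$ in $V$ then follows from $\|\nabla\mu\|=\|\partial_t\rho\|_{V'}$ together with the bound on $\overline{\mu}$. Taking the gradient of $\mu=F'(\rho)-K\star\rho$ gives $F''(\rho)\nabla\rho=\nabla\mu+\nabla K\star\rho$. Testing this with $\nabla\rho$ and using $F''\ge c_0$ bounds $\nabla\rho$. Finally, $F'(\rho)=\mu+K\star\rho\in L^\infty(\tau,t;V)$.

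\textbf{Main obstacle.} I expect the hardest part to be the rigorous justification of the time-differentiated estimate and of the chain rule for $F'(\rho)$ without boundary regularity. In \cite{AGGP,PS} some steps use $H^2$ elliptic regularity of the Neumann problem, which fails in general on Lipschitz domains.

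My first attempt would be to work only at the level of $V$ and $V'$, where $\mathcal N:V'_0\to V$ suffices. I would use the Galerkin and regularised-$F_\lambda$ approximation, so that every test function is admissible, and derive all bounds uniformly in $\lambda$ and in the Galerkin dimension. The regularity $\rho\in H^1(0,T;L^2)$ is recovered in the limit from the energy estimate combined with $F''\ge c_0$. I would also check that \cref{prop:GN} is only needed in the $L^p$ form stated there, since this avoids any higher elliptic regularity.
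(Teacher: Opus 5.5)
Your overall strategy matches the paper's. The paper also establishes \cref{known} by re-running \cite{Gal2017} for existence, uniqueness and continuous dependence, and \cite{AGGP,PS} for the instantaneous regularisation. It observes that only \cref{prop:PW} and the compact embedding $H^1(\Omega)\hookrightarrow L^2(\Omega)$ are needed, and it gives up $H^2$-regularity of $\mu$. The details you supply are the standard ones and are fine: the bound on $\overline{\mu}$ via $F'(s)(s-\overline{\rho}_0)\ge c|F'(s)|-C$, the $\mathcal N$-test for the difference of two solutions, and the time-differentiated estimate multiplied by $t$.

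One step fails, however. You cannot recover $\rho\in H^1(0,T;L^2(\Omega))$ ``from the energy estimate combined with $F''\ge c_0$''. The energy identity controls only $\int_0^T\|\nabla\mu\|^2$, i.e.\ $\partial_t\rho=\Delta\mu\in L^2(0,T;V')$. The condition $F''\ge c_0$ converts this into $\rho\in L^2(0,T;V)$, but neither gives $\partial_t\rho$ in $L^2$ in space up to $t=0$. The estimate that would do so is testing with $\partial_t\mu$, and it requires $\nabla\mu(0)\in L^2(\Omega)$. That is not available for $\rho_0\in L^\infty$ with $\|\rho_0\|_{L^\infty}\le 1$.

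The claim is in fact false in general. Take $K\equiv 0$ and $\rho_0\in L^\infty(\Omega)\setminus H^1(\Omega)$ with $|\rho_0|\le\tfrac12$. The solution stays in $[-\tfrac12,\tfrac12]$, and
$\int_s^T(F''(\rho)\partial_t\rho,\partial_t\rho)\,\mathrm{d}t+\tfrac12\|\nabla F'(\rho(T))\|^2=\tfrac12\|\nabla F'(\rho(s))\|^2$.
The right-hand side blows up as $s\to0^+$, by lower semicontinuity and since $F'(\rho_0)\notin H^1(\Omega)$. Hence $\int_0^T\|\partial_t\rho\|^2=\infty$, exactly as for the heat equation with $L^2\setminus H^1$ data.

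What the cited works give, and what your argument actually yields, is $\rho\in H^1(0,T;V')$. Your $t$-weighted estimate adds $\partial_t\rho\in L^2(\tau,T;L^2(\Omega))$ for every $\tau>0$, consistent with the bound $C_0/\sqrt{\tau}$ degenerating as $\tau\to0$. So the $H^1(0,T;L^2(\Omega))$ in the statement should be read as $H^1(0,T;V')$, and you should not try to prove it.

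A smaller mismatch is in the continuous-dependence estimate. Your Gronwall argument also puts $e^{C_3T}$ in front of $\|\rho_{0,1}-\rho_{0,2}\|_{V'}^2$, because the convolution term can only be absorbed up to a multiple of the $V'$-norm of the mean-free difference. That is the correct form of the estimate. Without the exponential on the first term, it would make the flow a $V'$-contraction for equal masses, which fails in the phase-separating regime. So ``Gronwall gives the stated estimate'' holds only with the exponential on the whole right-hand side.
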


\begin{remark}
Observe that we have stated the theorem in the case of a bounded Lipschitz domain, and thus we cannot expect to apply elliptic regularity results leading to the \(H^2\)-regularity for $\mu$ (cf. with the additional regularity of \cite[Equation 3.7]{P}, in the case of smooth domains).
\end{remark}

\begin{remark}\label{Re:AlternativeAdmissibility}
    Notice that the following alternative hypotheses on \(K\) are also admissible for \(d \in \{2,3\}\) \cite[p.\,5288]{Gal2017}:
\begin{enumerate}[label=(\alph*), leftmargin=1.7em, itemsep=0.em]
	\item \( K \in W^{2,1} \del{ B(0;\xi) } \) for \( \xi \sim \text{diam} (\Omega)\) and \( \overline{\Omega} \subseteq B(0;\xi)\), and also \( K(-x) = K(x)\),  for almost any $x\in\R^d$, $d\in\{2,3\}$,
	\item \( K\in C^3(\R^d \setminus \{0\}) \cap W_{\text{loc}}^{1,1} (\R^d) \), radially symmetric \(K(x) = k\del{\|x\|} \), \(k\) is non-increasing, \(k''(r)\) and \( k'(r)/r\) are monotone in \( \R_{> 0}\), and \( |\partial^\alpha u| \leq C \|x\|^{-1} \) for \( |\alpha| = 3\).
\end{enumerate}

Option (c) is of particular interest, as it implies that \(K\) is reasonably well-behaved at the origin, in the sense that it can present, at worst, a logarithmic singularity \cite[Remark 1]{Bedrossian2011}. It can further be proven \cite[\S1.3, Lemma 2]{Bedrossian2011} that even though the second derivatives of \( K \) are not, in general, locally integrable, it is still possible to properly define \( \Delta K \star u\) as a bounded linear operator on \( L^p(\Omega)\) with \( p \in (1,\infty)\), and it involves a Cauchy principal value integral.
\end{remark}

If we now add some further assumptions on the singular potential $F$, there are many recent results allowing us to guarantee that the solution $\rho$ stays instantaneously uniformly away from the pure phases. In particular, when $d=2$, as first introduced in \cite{GP}, as $\delta\to 0^+$ we assume, for some $\beta > \sfrac 1 2$:
\begin{equation}
\label{Further_d=2}
    \frac{1}{F'(1-2\delta)} = O\del{ \, \abs{\log(\delta) }^{-\beta} }
    \qquad\text{and}\qquad
    \frac{1}{\abs{F'(-1+2\delta)}} = O\del{ \, \abs{\log(\delta) }^{-\beta} }.
\end{equation}
On the other hand, if $d=3$, we need slightly stronger assumptions. Specifically, as introduced in \cite{P}, as $\delta\to 0^+$ we assume
\begin{equation}
    \label{Further_d=3}
    \frac{1}{F'(1-2\delta)} = O\del{ \, \abs{ \log(\delta) }^{-1} },
    \qquad
    \frac{1}{F''(\pm 1 \mp 2\delta)} = O\del{ \delta },
    \qquad\text{and}\qquad
    \frac{1}{\abs{F'(-1+2\delta)}} = O\del{ \, \abs{\log(\delta) }^{-1} } .
\end{equation}
 Notice that conditions \cref{Further_d=2,Further_d=3} are verified by the logarithmic potential \cref{eq:LogPotential}. 
Thanks to the validity of \cref{prop:GN}, we can then follow word-by-word the results in \cite{GP} for the two-dimensional case or \cite{P} for the three-dimensional case. Then, we obtain the following essential result:

\begin{theorem}\label{sep1}
    Under the same assumptions of \cref{known}, assume additionally that \cref{Further_d=2} holds if $d=2$ or \cref{Further_d=3} holds if $d=3$. Then, for any $\tau>0$ there exists $\delta\in (0,1)$, which depends on $\tau$, $\abs{ \overline{\rho}_0}$, and the initial datum, such that the unique global weak solution to problem \cref{PDS:NL-CH} given in \cref{known} satisfies
    \[
        \abs{ \rho } \leq 1 - \delta 
        \qquad 
        \text{a.e. in } 
        \Omega\times(\tau,+\infty).
    \]
    In other words, the instantaneous strict separation property from the pure phases $\pm 1$ holds for all times. Notice that the quantity $\delta\in(0,1)$ can be explicitly expressed as depending on some constants related to the domain, the potential, and the parameters of the problem; see \cite{GP,P}.
\end{theorem}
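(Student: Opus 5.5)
The plan is a De Giorgi iteration on the chemical potential, using the uniform bounds of \cref{known} and the Gagliardo--Nirenberg inequality of \cref{prop:GN} on the Lipschitz domain. Fix $\tau>0$ and work on $(\tau,\infty)$. By \cref{known}, $\norm{F'(\rho)}_{L^\infty(\tau,t;V)}\le C_1(\tau)$, with $C_1$ uniform in $t$, and $\sup_{t\ge\tau}\norm{\rho(t)}_V\le C_0/\sqrt{\tau}$. Since $K\in W^{1,1}_{\text{loc}}$ and $|\rho|<1$, Young's inequality gives $\norm{K\star\rho}_{L^\infty}+\norm{\nabla K\star\rho}_{L^\infty}\le C$, and this bound is uniform in time. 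The key reduction is that $\mu+K\star\rho=F'(\rho)$. It therefore suffices to show $\norm{F'(\rho)}_{L^\infty(\Omega\times(2\tau,\infty))}\le C$; by monotonicity and the blow-up of $F'$ in (H.2), this gives $|\rho|\le 1-\delta$. We may shift $\tau$ freely.

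\textbf{$d=2$ (following \cite{GP}).} Work pointwise in time $t\ge\tau$ rather than in parabolic cylinders. Since $F'(\rho(t))\in V$ with uniform bounds, \cref{prop:GN} with $C_{p,\Omega}=C_\Omega\sqrt p$ yields
\[
\norm{F'(\rho(t))}_{L^p}\le C\sqrt p
\quad\text{for all } p\ge 2,
\]
which is a Trudinger--Moser type exponential integrability. Now choose levels $k_n=1-\delta-\delta 2^{-n}$ and test the relation for $F'(\rho)$ with $(\rho-k_n)^+$ to build a De Giorgi sequence for $y_n=\mathcal L^2(\{\rho(t)>k_n\})$. Chebyshev's inequality combined with $F'(\rho)\ge F'(k_n)$ on that set and the $\sqrt p$ growth gives
\[
y_n\le \left(\frac{C\sqrt p}{F'(1-2\delta)}\right)^p .
\]
Optimizing in $p$ against assumption \cref{Further_d=2}, where $\beta>1/2$ is exactly what beats $\sqrt p$, makes $y_0$ small enough that the recursion $y_{n+1}\le C b^n y_n^{1+\epsilon}$ converges to $0$. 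The recursion itself comes from testing the $\mu$-equation by $(\rho-k_n)^+$ and using $F''\ge c_0$ together with the $H^1$ bounds.

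\textbf{$d=3$ (following \cite{P}).} Here we have only $L^6$, so the space-time structure of the equation is needed. Test \cref{phi} with $v=(F'(\rho)-k)^+$, or a truncation of $\rho$ weighted by a time cutoff, to obtain energy inequalities on the sets $A_n=\{\rho>k_n\}$ in cylinders $(t_n,T]$ with $t_n\uparrow 2\tau$. The diffusion term $\nabla\mu=F''(\rho)\nabla\rho-\nabla K\star\rho$ gives coercivity through $F''$, while $\nabla K\star\rho$ is a bounded lower-order term. The condition $1/F''(1-2\delta)=O(\delta)$ controls the ratio between truncations of $\rho$ and of $F'(\rho)$, and $1/F'=O(|\log\delta|^{-1})$ controls the initial measure $y_0$. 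The parabolic embedding $L^\infty L^2\cap L^2H^1\subset L^{10/3}$, which follows from \cref{prop:GN} with $p=6$, closes the recursion $y_{n+1}\le C b^n y_n^{1+2/5}$. The same argument with $-\rho$ handles the lower phase.

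\textbf{Main obstacle.} The only place the geometry enters is through Poincaré and Gagliardo--Nirenberg. The hard step is therefore checking that every constant in the De Giorgi recursion is uniform in time and depends only on $\Omega$ through $C_P$ and $C_{p,\Omega}$. In $d=2$ this specifically requires the sharp $\sqrt p$ dependence. Uniformity over $(\tau,\infty)$ follows from the time-uniform bounds in \cref{H1}, together with iterating on unit-length windows $[t,t+1]$ for all $t\ge\tau$. The resulting $\delta$ depends only on $\tau$, $|\overline\rho_0|$ (through the bound on $\mu$ via its mean) and $\mathcal F(\rho_0)$.
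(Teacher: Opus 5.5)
Your overall reduction matches the paper's justification exactly. The paper simply says the De Giorgi arguments of \cite{GP} ($d=2$) and \cite{P} ($d=3$) can be followed word by word, because they use the geometry of $\Omega$ only through \cref{prop:PW} and \cref{prop:GN} (with the $\sqrt p$ constant in $d=2$), and both hold on Lipschitz domains.

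Your reconstruction of the $d=2$ argument, however, has a genuine gap: a De Giorgi iteration pointwise in time cannot work for the nonlocal equation.
\begin{itemize}
\item At fixed $t$ the relation $F'(\rho)=\mu+K\star\rho$ is algebraic in $\rho$. No elliptic operator acts on $\rho$, unlike the local Cahn--Hilliard equation where $-\Delta\rho$ appears in $\mu$. So testing it by $(\rho-k_n)^+$ produces no gradient term.
\item Testing \cref{phi} at fixed $t$ instead leaves $\langle\partial_t\rho(t),(\rho-k_n)^+\rangle$. This is only bounded by $\|\partial_t\rho(t)\|_{V'}\|(\rho-k_n)^+\|_V=O(1)$, not by a power of $y_n$.
\end{itemize}
Hence no recursion $y_{n+1}\le Cb^ny_n^{1+\epsilon}$ for the spatial measures $y_n=\mathcal L^2(\{\rho(t)>k_n\})$ comes out.

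In fact no fixed-time argument can succeed. The fixed-time information ($F'(\rho(t))$, hence $\mu(t)$, bounded in $V$) is compatible with non-separated profiles. For example, take $\rho=(F')^{-1}(w)$ with $w(x)=\log\log\bigl(e/|x-x_0|\bigr)$, which lies in $H^1(\Omega)$ but is unbounded.

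The iteration has to be parabolic:
\begin{itemize}
\item Test $\partial_t\rho=\operatorname{div}\bigl(F''(\rho)\nabla\rho-\nabla K\star\rho\bigr)$ by $(\rho-k_n)^+$ times time cut-offs on shrinking cylinders, and let $y_n$ be space--time measures.
\item Use $F''\ge c_0$, $\|\nabla K\star\rho\|_{L^\infty}\le C$ and the 2D embedding $L^\infty(L^2)\cap L^2(H^1)\hookrightarrow L^4(\Omega\times I)$. This gives $y_{n+1}\le C\delta^{-2}8^ny_n^{3/2}$, which requires $y_0\le c\,\delta^4$.
\end{itemize}
Your Chebyshev bound $|\{\rho(t)>1-2\delta\}|\le\bigl(C\sqrt p/F'(1-2\delta)\bigr)^p$ is exactly the right ingredient for that step. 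With $p\sim|\log\delta|$ and $\beta>\tfrac12$ it gives $y_0\le c\,\delta^4$ for small $\delta$.

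In $d=3$ your set-up is parabolic, as it should be, but it does not track how the constants depend on $\delta$. With the estimates you indicate, it does not close on unit-length windows:
\begin{itemize}
\item Using $F''\ge c/\delta$ on $\{\rho>k_n\}$ and $0\le(\rho-k_n)^+\le2\delta$, the energy inequality and the $L^{10/3}$ embedding give a recursion whose constant carries a negative power of $\delta$, e.g.\ $y_{n+1}\le C\delta^{-1}b^ny_n^{7/5}$.
\item So $y_0$ must be bounded by a positive power of $\delta$.
\item But $F'(\rho)\in L^\infty(\tau,\infty;L^6(\Omega))$ together with $F'(1-2\delta)\gtrsim|\log\delta|$ only yields $y_0\lesssim|\log\delta|^{-6}$.
\end{itemize}

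One way to close it is to observe that $F''\ge c/\delta$ makes $\delta$ the natural time scale. In the variable $s=t/\delta$, the inequality for $\psi_n=(\rho-k_n)^+/\delta\in[0,2]$ reads $\tfrac12\tfrac{d}{ds}\|\psi_n\|^2+c\|\nabla\psi_n\|^2\le C\|\nabla\psi_n\|_{L^1(\Omega)}$, plus cut-off terms, with constants independent of $\delta$. Run the iteration on windows of $t$-length $\delta$, slid over $(\tau,\infty)$. This gives $y_{n+1}\le Cb^ny_n^{7/5}$ with $\delta$-free constants, and then $y_0\le\sup_t|\{\rho(t)>1-2\delta\}|\to0$ is enough.
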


We can also state some results about the convergence of any weak solution to a unique equilibrium. This convergence, under the assumption of singular potential $F$,  was shown for the two-dimensional case in \cite[Corollary 5.15]{Gal2017}, and it was recently established in the case of three-dimensional bounded domains \cite[Theorem 5.8]{P}. All the analysis has been carried out in the case of a constant mobility $m>0$. Nevertheless, in the recent \cite{GP3} the result was extended to the case of nondegenerate continuous mobilities $m(\rho)$, exploiting and extending the seminal results in \cite{GP2}. Of course, the validity of Poincaré--Wirtinger's and Gagliardo--Nirenberg's inequalities of \cref{prop:PW,prop:GN} is again essential for carrying out the arguments. For a fixed $\lambda\in(0,1)$, we introduce the set
\[
    \mathcal{H}_\lambda 
    \coloneqq  
    \cbr{ \rho \in L^\infty(\Omega) : \, \norm{\rho}_{ L^\infty (\Omega) } \leq 1, \quad \abs{ \overline{\rho} } = \lambda }.
\]
We thus obtain the following result:

\begin{theorem}\label{equi}
	Let \(\lambda \geq 0\).
	Under the same assumptions as in \cref{sep1}, suppose additionally that $F$ is real analytic in $(-1,1)$. Then, the weak solution $\rho$ from \cref{known}, departing from the initial datum $\rho_0\in \mathcal{H}_\lambda$, converges to a unique equilibrium $\rho_\infty$. 
    More precisely
    \begin{equation}
    \label{equila}
        \lim_{t\to +\infty} \norm{ \rho(t) - \rho_\infty }_{L^2(\Omega)} = 0,
	\end{equation}
    where $\rho_\infty \in \mathcal{H}_\lambda \cap V$ satisfies the \embh{stationary nonlocal Cahn--Hilliard equation}:
	\begin{equation}
        \label{conv1a}
		-K \star \rho_\infty + F'(\rho_\infty) = \mu_\infty 
        \qquad \text{in }\Omega,
	\end{equation}
	with $\mu_\infty\in \R$ as a real constant.
\end{theorem}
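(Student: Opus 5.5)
The argument is the standard Łojasiewicz--Simon one. The separation result \cref{sep1} allows us to replace the singular potential by a smooth one near the trajectory. Fix $\tau>0$. By \cref{sep1} there is $\delta\in(0,1)$ with $|\rho|\le 1-\delta$ a.e. in $\Omega\times(\tau,\infty)$. On $[-1+\delta,1-\delta]$ the potential $F$ is real analytic, with $F''\ge c_0$ and all derivatives bounded. We may therefore modify $F$ outside this interval to a function $\tilde F$ that is smooth on $\R$, has $\tilde F''\ge c_0$, and is analytic near $[-1+\delta,1-\delta]$, without changing the dynamics for $t\ge\tau$.

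\textbf{Step 1: the $\omega$-limit set.} By \cref{H1}, the orbit $\{\rho(t)\}_{t\ge\tau}$ is bounded in $V$. By compactness of $V\hookrightarrow L^2(\Omega)$ it is precompact in $L^2(\Omega)$. Hence $\omega(\rho_0)$ is nonempty, compact and connected in $L^2$, and it lies in $\mathcal H_\lambda$ because mass is conserved.

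\textbf{Step 2: elements of $\omega(\rho_0)$ are equilibria.} The energy identity \cref{dissipative} shows that $\mathcal F(\rho(t))$ is nonincreasing and bounded below, since $F$ is bounded and $K\star\rho$ is bounded for $|\rho|\le1$ and $K\in L^1_{\rm loc}$. Hence $\int_\tau^\infty\|\nabla\mu\|^2\,dt<\infty$. Take $t_n\to\infty$ with $\rho(t_n)\to\rho_\infty$ in $L^2$. Consider the time translates on $[0,1]$. Passing to the limit, using separation to control $F'(\rho)$ and the continuity of $K\star$ on $L^2$, shows that $\nabla\mu_\infty=0$. Thus $\mu_\infty$ is constant and \cref{conv1a} holds. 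Moreover, $\mathcal F$ is constant, equal to $\mathcal F_\infty$, on $\omega(\rho_0)$. The $V$-regularity of $\rho_\infty$ follows from the $V$-bounds in \cref{H1}, by weak lower semicontinuity.

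\textbf{Step 3: Łojasiewicz--Simon inequality.} I would prove, for the energy restricted to the affine space $\{\overline\rho=\lambda\}$ in $L^2(\Omega)$, an inequality of the form
\[
|\mathcal F(\rho)-\mathcal F(\rho_\infty)|^{1-\theta}\le C\,\|\mu-\overline{\mu}\|_{L^2(\Omega)}
\]
for $\rho$ near $\rho_\infty$ in $L^2$ with $|\rho|\le 1-\delta$. Here $\theta\in(0,\tfrac12]$. The gradient of the energy in this space is $P_0(\tilde F'(\rho)-K\star\rho)$, where $P_0$ is the projection onto zero-mean functions. Its linearisation is $\tilde F''(\rho_\infty)\,\cdot-P_0K\star\cdot$. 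This is a multiplication operator bounded below by $c_0$, perturbed by a compact operator, so it is Fredholm of index zero on $L^2$. Together with analyticity of $\tilde F$ near the range of $\rho_\infty$, this is exactly the setting of the abstract Łojasiewicz--Simon theorem (Chill's version), as in \cite{Gal2017,P}. I expect this step to be the main obstacle, especially in $d=3$. The energy is analytic on $L^\infty$ but not on $L^2$, so one works in $L^\infty$ using the uniform separation. The Lipschitz (non-smooth) boundary is harmless, since no elliptic regularity is needed: the operator involves only multiplication and convolution. One must also check that $\rho_\infty\in L^\infty$ with $|\rho_\infty|\le1-\delta$, which follows from the a.e. bounds.

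\textbf{Step 4: conclusion.} Set $H(t)=(\mathcal F(\rho(t))-\mathcal F_\infty)^\theta$. Then
\[
-\tfrac{d}{dt}H=\theta(\mathcal F-\mathcal F_\infty)^{\theta-1}\|\nabla\mu\|^2\ge c\,\|\nabla\mu\|,
\]
by the Łojasiewicz--Simon inequality combined with Poincaré--Wirtinger (\cref{prop:PW}), which bounds $\|\mu-\overline\mu\|\le C_P\|\nabla\mu\|$. This holds whenever $\rho(t)$ lies in the Łojasiewicz neighbourhood. A standard continuation argument, using a uniform neighbourhood of the compact set $\omega(\rho_0)$, then gives $\int_T^\infty\|\nabla\mu\|\,dt<\infty$. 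Since $\|\partial_t\rho\|_{V'}\le\|\nabla\mu\|$ by \cref{phi}, $\rho(t)$ is Cauchy in $V'$. Precompactness in $L^2$ upgrades this to convergence $\rho(t)\to\rho_\infty$ in $L^2(\Omega)$, which is \cref{equila}.
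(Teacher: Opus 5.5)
Your Steps 1, 2 and 4 follow the standard route, and they are what the paper relies on. The paper does not reprove this theorem: it defers to \cite{Gal2017} and \cite{P}, and your Step 4 is the argument run in the proof of \cref{Conv_Rate}.

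The gap is Step 3, in the way you propose to carry it out. You flag it as the obstacle, but the tool you name does not give the inequality you state. A Chill-type abstract {\L}ojasiewicz--Simon theorem needs the energy to be analytic (at least $C^2$) on the Banach space in which the neighbourhood of $\rho_\infty$ is taken.
\begin{itemize}
\item On $L^2(\Omega)$ this fails. The superposition map $\rho\mapsto \tilde F'(\rho)$ is not Fr\'echet differentiable $L^2\to L^2$ unless $\tilde F'$ is affine. So the Fredholm property of $\tilde F''(\rho_\infty)-P_0K\star$ on $L^2$ is beside the point.
\item If you work in $L^\infty$, you obtain an inequality valid only for $\|\rho-\rho_\infty\|_{L^\infty}$ small. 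The separation $|\rho|\le 1-\delta$ gives $L^\infty$-boundedness, not $L^\infty$-closeness. The set $\{\|\rho-\rho_\infty\|_{L^2}<\eta,\ |\rho|\le1-\delta\}$ is not an $L^\infty$-neighbourhood.
\end{itemize}
An $L^\infty$-local inequality cannot be used in Step 4. What you know is precompactness of the orbit in $L^2$ (hence in $L^p$, $p<\infty$), not in $L^\infty$. On a Lipschitz domain you have no $H^2$-regularity for $\mu$, so you get no uniform H\"older bound on $\rho=(F')^{-1}(\mu+K\star\rho)$. Making $\rho(t)$ $L^\infty$-close to $\rho_\infty$ would essentially require $\mu(t)\to\mu_\infty$ in $L^\infty$. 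The dissipation bound $\int_\tau^\infty\|\nabla\mu\|_{L^2}^2\dif t<\infty$ does not provide that.

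The missing ingredient is a non-smooth {\L}ojasiewicz--Simon inequality valid on an $L^2$-ball intersected with the pointwise constraint $\rho\in[-1+\gamma,1-\gamma]$. This is exactly \cref{Lojaw} in the paper, taken from \cite{DellaPorta,Loja}. Its proof does not apply an abstract theorem in a single space. It uses the structure of $\mathcal F$: a pointwise, uniformly convex part plus a compact, smoothing nonlocal part. This lets one reduce to comparison functions that are $L^\infty$-close to $\rho_\infty$. Note that $K\star$ maps $L^2$-small, $L^\infty$-bounded perturbations to $L^\infty$-small ones.

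To repair the proof:
\begin{itemize}
\item Replace Step 3 by an appeal to \cref{Lojaw}, with $\gamma=\delta$ taken from \cref{sep1}.
\item Bound $\|P_0(F'(\rho)-K\star\rho)\|_{V'}\le C\|P_0\mu\|_{L^2}\le C\,C_P\|\nabla\mu\|_{L^2}$, using \cref{prop:PW}.
\end{itemize}
With these changes, the rest of your argument goes through as written.
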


\begin{remark}
    Note that, for instance, the logarithmic potential $F$ in \cref{eq:LogPotential} is indeed real analytic in $(-1,1)$, and thus the additional assumption of the theorem above is satisfied.
\end{remark}

At this point, let us recall the validity of the following version of the {\L}ojasiewicz--Simon inequality:

\begin{proposition}[{\citealp[Proposition 6.2]{DellaPorta}, {\citealp[Theorem 6]{Loja}}}]\label{Lojaw}
	Let $P_0: L^2(\Omega)\to L^2_0(\Omega)$ be the projector operator from $ L^2(\Omega)$ to the subspace $L^2_0(\Omega) \coloneqq \cbr{ u\in  L^2(\Omega):\ \overline u=0 }$. Assume that $F$ satisfies \cref{assu:NLCH} and is real analytic in $(-1,1)$, $\rho\in V\cap L^\infty(\Omega)$ is such that $ \rho(x) \in [-1+\gamma,  1-\gamma]$, for any $x\in\overline{\Omega}$ and for some $\gamma\in(0,1)$, and $\rho_\infty$ satisfies \cref{conv1a}. Then, there exist $\vartheta\in \del{0,\sfrac{1}{2} }$, $\eta>0$, and a positive constant $\overline C$ such that
    \begin{equation*}
        \abs{ \mathcal{F}(\rho) - \mathcal{F}(\rho_\infty) }^{1-\vartheta} 
        \leq
        \overline C\,
        \norm{ P_0\del{ F'(\rho) - K \star \rho } }_{V^\prime},
    \end{equation*}
\end{proposition}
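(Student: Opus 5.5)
The proposition is the standard {\L}ojasiewicz--Simon gradient inequality for the nonlocal energy $\mathcal F$. It holds only in a neighbourhood of $\rho_\infty$; presumably the omitted quantifier is $\norm{\rho-\rho_\infty}_{L^2(\Omega)}\le\eta$. My plan is to reduce it to the abstract analytic version in \cite[Theorem 6]{Loja}, as done in \cite[Proposition 6.2]{DellaPorta}.

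\textbf{Step 1: localise.} Both $\rho$ and $\rho_\infty$ take values in $[-1+\gamma,1-\gamma]$. For $\rho_\infty$ this follows from \cref{conv1a}: $K\star\rho_\infty\in L^\infty$ because $K\in L^1_{\text{loc}}$ and $\Omega$ is bounded, so $F'(\rho_\infty)$ is bounded and $F'$ is singular at $\pm1$. I would therefore replace $F$ by a modification $\widetilde F$ that is real analytic on $\R$, or at least analytic on a complex neighbourhood of $[-1+\gamma/2,1-\gamma/2]$, and that coincides with $F$ there. Then I work on the affine space $\rho_\infty+L^2_0(\Omega)$, since mass is conserved by construction.

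\textbf{Step 2: analyticity and Fredholm structure.} Set $E(u)=\mathcal F(\rho_\infty+u)$ for $u\in L^2_0\cap L^\infty$. The map $u\mapsto\int_\Omega \widetilde F(\rho_\infty+u)\,dx$ is analytic on $L^\infty$, because it is a Nemytskii operator of an entire-type function on bounded sets. The quadratic term $-\tfrac12(K\star u,u)$ is trivially analytic.

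The gradient is $E'(u)=P_0\bigl(F'(\rho_\infty+u)-K\star(\rho_\infty+u)\bigr)$. Its linearisation at $0$ is
\[
L v=P_0\bigl(F''(\rho_\infty)v-K\star v\bigr).
\]
Here $F''(\rho_\infty)\ge c_0>0$ is a bounded, invertible multiplication operator. The convolution $v\mapsto K\star v$ is compact on $L^2(\Omega)$ since $K\in W^{1,1}_{\text{loc}}$. Hence $L$ is Fredholm of index zero on $L^2_0$, and it is also Fredholm on $L^\infty\cap L^2_0$. This is exactly the structure needed by the abstract theorem in the $L^2$-based version, with the gradient taken in $L^2$.

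\textbf{Step 3: apply the abstract theorem.} The abstract result gives
\[
|E(u)-E(0)|^{1-\vartheta}\le C\norm{E'(u)}_{L^2}\qquad\text{for }\norm{u}_{L^\infty}\text{ small}.
\]
I then need to upgrade this to smallness in $L^2$ and a $V'$-norm on the right. To do so, I use the pointwise identity $F'(\rho)-F'(\rho_\infty)=\mu-\mu_\infty+K\star(\rho-\rho_\infty)$ together with the uniform separation by $\gamma$. These make $F'$ bi-Lipschitz on the relevant range, which controls the $L^\infty$ distance through an $L^2$ distance plus the gradient.

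To pass to the dual norm $\norm{\cdot}_{V'}$, I would use that $\rho\in V$. Interpolating $\norm{w}_{L^2}^2\le\norm{w}_{V'}\norm{w}_V$ with $\norm{w}_V$ bounded turns the $L^2$ bound into a $V'$ bound, at the cost of decreasing $\vartheta$. This is allowed because $|\mathcal F(\rho)-\mathcal F(\rho_\infty)|$ is bounded.

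\textbf{Main obstacle.} The hardest step is this last passage, from $L^\infty$-closeness with an $L^2$ gradient to the $V'$ gradient norm with $L^2$-closeness. On Lipschitz domains no $H^2$ regularity is available. My plan is to rely only on the boundedness of $\norm{\rho}_V$ and $\norm{F'(\rho)}_V$ near equilibrium, which is supplied by \cref{H1}, and on the Gagliardo--Nirenberg inequality of \cref{prop:GN}.
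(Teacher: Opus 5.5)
The paper does not prove this proposition itself; it imports it from the cited works. Your Steps 1--2 are fine: the separation of $\rho_\infty$ read off from \cref{conv1a}, the analyticity of the Nemytskii part on $L^\infty$, and the structure of $L=P_0(F''(\rho_\infty)\,\cdot\,-K\star\cdot)$ as an invertible multiplication plus a compact operator. The neighbourhood upgrade in Step 3, however, does not work. The identity $F'(\rho)-F'(\rho_\infty)=(\mu-\mu_\infty)+K\star(\rho-\rho_\infty)$ only gives $\|\rho-\rho_\infty\|_{L^\infty}\le c_0^{-1}\big(\|\mu-\mu_\infty\|_{L^\infty}+\|K\star(\rho-\rho_\infty)\|_{L^\infty}\big)$. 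Neither the hypotheses nor the right-hand side of the inequality control $P_0\mu$ in $L^\infty$. A $\gamma$-separated $\rho$ can be $L^2$-close to $\rho_\infty$ while differing from it by an amount of order $\gamma$ on a tiny set, and for such $\rho$ an inequality proved on an $L^\infty$-ball says nothing.

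The missing idea, which is how the non-smooth {\L}ojasiewicz--Simon argument behind such results handles this, is to use $L^\infty$ only for an auxiliary point. Let $Z=\ker L\subset L^\infty$, which is finite-dimensional, with $L^2$-orthogonal projection $\Pi$. Then $z=\Pi(\rho-\rho_\infty)$ is automatically $L^\infty$-small. The analytic implicit function theorem in $L^\infty$ gives $\tilde\rho$ with $\overline{\tilde\rho}=\overline{\rho_\infty}$ and $P_0(F'(\tilde\rho)-K\star\tilde\rho)+\Pi(\tilde\rho-\rho_\infty)=z$, and the finite-dimensional {\L}ojasiewicz inequality applies to the analytic map $z\mapsto\mathcal F(\tilde\rho)$. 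Set $w=P_0(F'(\rho)-K\star\rho)$, $\delta=\rho-\tilde\rho$ and $a=\int_0^1F''(\tilde\rho+s\delta)\,ds\in[c_0,C_\gamma]$. Then $P_0(F'(\tilde\rho)-K\star\tilde\rho)=\Pi\delta$ and $P_0(a\delta-K\star\delta)+\Pi\delta=w$. A compactness argument makes this operator uniformly coercive in $L^2$ on an $L^2$-neighbourhood: $K\star\delta_n$ converges strongly, $a_n\to F''(\rho_\infty)$ a.e.\ along subsequences, and $L+\Pi$ is injective. Hence $\|\delta\|_{L^2}\lesssim\|w\|_{L^2}$. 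Since $\mathcal F$ is $C^{1,1}$ with respect to $L^2$ on $\gamma$-separated functions, also $|\mathcal F(\rho)-\mathcal F(\tilde\rho)|\lesssim\|w\|_{L^2}^2$. Combining the two pieces gives the inequality with $\|w\|_{L^2}$ on the right, for all $\gamma$-separated $\rho$ in an $L^2$-ball around $\rho_\infty$.

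Your passage to $V'$ also fails, and it cannot be repaired. Interpolating, $|\mathcal F(\rho)-\mathcal F(\rho_\infty)|^{1-\vartheta}\lesssim\|w\|_{L^2}\le(\|w\|_{V'}\|w\|_V)^{1/2}$ gives $|\mathcal F(\rho)-\mathcal F(\rho_\infty)|^{2-2\vartheta}\lesssim\|w\|_{V'}$. That is the exponent $1-\vartheta'$ with $\vartheta'=2\vartheta-1<0$. Boundedness of the energy gap lets you raise the exponent on the left, not lower it. Moreover, the $V$-bounds of \cref{H1} are properties of a trajectory, not hypotheses of the proposition.

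In fact the $V'$ form is false with uniform constants. Take $\rho_k=\rho_\infty+\epsilon\sin(2\pi k x_1)$ on the unit square; it preserves mass, is separated for small $\epsilon$, and satisfies $\|\rho_k-\rho_\infty\|_{L^2}=\epsilon/\sqrt2$. Convexity and compactness of $K\star$ give $\mathcal F(\rho_k)-\mathcal F(\rho_\infty)\ge\frac{c_0}{4}\epsilon^2-o(1)$ as $k\to\infty$. Meanwhile $F'(\rho_k)$ converges weakly in $L^2$ to $\int_0^1F'(\rho_\infty+\epsilon\sin 2\pi s)\,ds=F'(\rho_\infty)+\frac{\epsilon^2}{4}F'''(\rho_\infty)+O(\epsilon^4)$, and $K\star\sin(2\pi k x_1)\to0$. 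By the compact embedding $L^2\hookrightarrow V'$, $\limsup_k\|P_0(F'(\rho_k)-K\star\rho_k)\|_{V'}=O(\epsilon^2)$. Since $2(1-\vartheta)<2$, no $\vartheta>0$ with $\rho$-independent $\overline C$ can work. The correct target is therefore the $L^2$ form. This is also all that \cref{Conv_Rate} uses: its proof immediately bounds the right-hand side by $\|P_0\mu\|_{L^2}$.
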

    whenever \( \norm{\rho - \rho_\infty}_{L^2(\Omega)} \leq \eta \).

In addition to the convergence to stationary states results, we aim to find a rate of convergence for \cref{equila} as $t\to \infty$. This is indeed possible after applying the {\L}ojasiewicz--Simon inequality, since the solution is strictly separated from any positive time onwards. The result reads as follows:

\begin{lemma}\label{Conv_Rate}
    Under the same assumptions of \cref{equi}, the weak solution $\rho$ from \cref{known}, departing from the initial datum $\rho_0\in \mathcal{H}_\lambda$, converges to a unique equilibrium $\rho_\infty$, satisfying \cref{equila}, with polynomial decay rate in the $L^2$ distance:
	\[
        \norm{ \rho(t) - \rho_\infty }_{L^2(\Omega)}
        \leq C (1+t)^{ -\frac{\vartheta}{2(1-2\vartheta)} }
	   \qquad \forall t\geq 1,
	\]
    where $C>0$ and $\vartheta\in (0,\sfrac{1}{2})$ are suitable constants depending on the initial data, $\Omega$, $F$, and the other parameters of the system.
\end{lemma}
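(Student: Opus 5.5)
We follow the standard Łojasiewicz--Simon scheme. It is carried out first in the $V'$ norm, obtained via the energy identity \cref{dissipative}, and then transferred to $L^2$ by interpolation with the uniform $V$ bound \cref{H1}.

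\emph{Setup.} By \cref{sep1} (with $\tau=1$), there is $\delta\in(0,1)$ such that $|\rho|\le 1-\delta$ a.e. in $\Omega\times(1,\infty)$. Hence the hypotheses of \cref{Lojaw} hold for $\rho(t)$, $t\ge1$, with $\gamma=\delta$. By \cref{equi} we have $\rho(t)\to\rho_\infty$ in $L^2$, so there is $t_0\ge1$ with $\|\rho(t)-\rho_\infty\|_{L^2}\le\eta$ for $t\ge t_0$. Since $\mathcal F(\rho(t))$ is nonincreasing and $\mathcal F$ is continuous along the trajectory (thanks to separation, $F$ is smooth on $[-1+\delta,1-\delta]$, and $K\star$ is bounded on $L^2$), we get $\mathcal F(\rho(t))\downarrow\mathcal F(\rho_\infty)$. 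Set $H(t)=\mathcal F(\rho(t))-\mathcal F(\rho_\infty)\ge0$. If $H$ vanishes at some time, then $\nabla\mu\equiv0$ afterwards and $\rho$ is stationary, so we may assume $H>0$. Since $\overline{\partial_t\rho}=0$, \cref{phi} yields $\|P_0\mu\|_{V'}\le C\|\nabla\mu\|_{L^2}$: write $(P_0\mu,v)=(\mu-\overline\mu,v-\overline v)$ and use \cref{prop:PW}. Likewise $\|\partial_t\rho\|_{V'}\le\|\nabla\mu\|_{L^2}$.

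\emph{Differential inequality.} From \cref{dissipative}, $H'=-\|\nabla\mu\|^2_{L^2}$ a.e. Combining this with \cref{Lojaw} gives
\[
-\frac{d}{dt}H^{\vartheta}=\vartheta H^{\vartheta-1}\|\nabla\mu\|_{L^2}^2\ge \frac{\vartheta}{C\overline C}\|\nabla\mu\|_{L^2}\ge c\|\partial_t\rho\|_{V'} .
\]
Two consequences follow. First, $H'\le -cH^{2(1-\vartheta)}$, and integrating gives $H(t)\le C(1+t)^{-1/(1-2\vartheta)}$ (here $2(1-\vartheta)>1$ because $\vartheta<\sfrac12$). Second, integrating the displayed inequality on $(t,\infty)$ gives
\[
\|\rho(t)-\rho_\infty\|_{V'}\le\int_t^\infty\|\partial_t\rho\|_{V'}\,ds\le CH(t)^\vartheta\le C(1+t)^{-\vartheta/(1-2\vartheta)} .
\]
The constants valid for $t\ge t_0$ extend to all $t\ge1$ by enlarging $C$, using $\|\rho\|_{L^\infty}\le1$.

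\emph{Transfer to $L^2$.} Use the interpolation $\|u\|_{L^2}^2=\langle u,u\rangle\le\|u\|_{V'}\|u\|_V$ with $u=\rho(t)-\rho_\infty$. By \cref{H1} with $\tau=1$, $\|\rho(t)\|_V\le C_0$ for $t\ge1$, and $\rho_\infty\in V$; so $\|u\|_V$ is uniformly bounded. Therefore
\[
\|\rho(t)-\rho_\infty\|_{L^2}\le C(1+t)^{-\frac{\vartheta}{2(1-2\vartheta)}},
\]
which is exactly the claimed rate; the factor $\sfrac12$ is the cost of interpolation.

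\emph{Main obstacle.} The delicate points are twofold. One is justifying the chain rule $\frac{d}{dt}H^\vartheta$ for the weak solution; we handle this via the energy identity \cref{dissipative}, which holds on every subinterval, so that $H$ is absolutely continuous. The other is ensuring that the Łojasiewicz neighbourhood is entered and never left. Here the full convergence already proved in \cref{equi} saves us: no trapping argument is needed, and the uniform strict separation from \cref{sep1} makes \cref{Lojaw} applicable with a fixed $\gamma$, hence with fixed constants $\vartheta,\eta,\overline C$.
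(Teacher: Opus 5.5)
Your proposal is correct and follows essentially the same route as the paper's proof. Both arguments apply the {\L}ojasiewicz--Simon inequality with $\gamma=\delta$ from \cref{sep1}. The differential inequality for $H^\vartheta$ then gives the energy decay $(1+t)^{-1/(1-2\vartheta)}$. Integrating $\|\partial_t\rho\|_{V'}\le\|\nabla\mu\|_{L^2}$ over $(t,\infty)$ gives the $V'$ rate $(1+t)^{-\vartheta/(1-2\vartheta)}$. Finally, $V'$--$V$ interpolation with the uniform bound \cref{H1} yields the $L^2$ rate.

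The only difference is cosmetic. You bound $\|P_0\mu\|_{V'}$ by $\|\nabla\mu\|_{L^2}$ directly via duality and \cref{prop:PW}, whereas the paper passes through $\|P_0\mu\|_{L^2}$. Your appeal to $\overline{\partial_t\rho}=0$ and \cref{phi} at that step is unnecessary.
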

\begin{proof}
    By reproducing the proof of \cite[Theorem 5.8]{P}, let us fix $\vartheta\in \del{0,\sfrac{1}{2} }$ and $\eta>0$ given in \cref{Lojaw}, where we choose $\gamma$ equal to the value of $\delta$ given in \cref{sep1}. 
    It is then possible to show that there exists $t^*>0$ such that $\norm{ \rho(t) -\rho_\infty }_{ L^2(\Omega)} \leq \eta$, for all $t\geq t^*$, where $\rho_\infty$ is defined in \cref{equi}. Therefore, since the solution $\rho$ enjoys the separation property by \cref{sep1} and thanks to the choice of $\gamma$, by \cref{Lojaw} we get, for any $t\geq t^*$, that
	\begin{align}
        \label{LSineq}\del{ \mathcal{F}(\rho) - \mathcal{F}(\rho_\infty) }^{1-\vartheta} 
        \leq
        \norm{ P_0\del{ F'(\rho) - K \star \rho } }_{V^\prime} 
        \leq
        \overline C\, \norm{ P_0\mu}_{L^2(\Omega)} 
        \leq
        \widehat{C} \, \norm{ \nabla \mu }_{ L^2(\Omega)},
	\end{align}
    where $\widehat{C} > 0$ depends on $\overline C$ and on the Poincaré--Wirtinger constant; see \cref{prop:PW}. Recall that $\mathcal F(\rho(\cdot))$ is nonincreasing in time, and thus $\mathcal F(\rho(t))-\mathcal F(\rho_\infty)\geq 0$ for any $t\geq0$. We can actually assume the strict positivity of this quantity, as otherwise, if there exists $t_1>0$ such that $\mathcal F(\rho(t_1))=\mathcal F(\rho_\infty)$, then $\rho$ would be invariant in time for $t\geq t_1$, and the proof is finished. As a consequence, thanks to the energy identity \cref{dissipative} and using \cref{LSineq}, we deduce, for any $t\geq t^*$, that
    \begin{align}
        -\dod{ }{t}     \del{ \mathcal{F}(\rho) - \mathcal{F}(\rho_\infty) }^{\vartheta} 
        &= -\vartheta \del{ \mathcal{F}(\rho) - \mathcal{F}(\rho_\infty) }^{\vartheta - 1} \dod{ }{t} \mathcal{F}(\rho)
        \notag
        \\
        &\geq
        \del{ \mathcal{F}(\rho) - \mathcal{F}(\rho_\infty) }^{\vartheta - 1} \vartheta \norm{ \nabla \mu }^2_{L^2(\Omega)} 
        \geq
        C_\text{A} \del{ \mathcal{F}(\rho) - \mathcal{F}(\rho_\infty) }^{1 - \vartheta},
        \label{i1}
    \end{align}
    where $C_\text{A}>0$ is a positive constant independent of $t$. By \cref{LSineq}, we get
    	\begin{align*}
        \norm{ \nabla \mu }_{ L^2(\Omega)}^2\geq \frac1{\widehat C^2}\del{ \mathcal{F}(\rho) - \mathcal{F}(\rho_\infty) }^{2-2\vartheta}.
	\end{align*}
    Also, we analogously infer that
    \[
        -\dod{ }{t}     \del{ \mathcal{F}(\rho) - \mathcal{F}(\rho_\infty) }^{\vartheta} 
        = -\vartheta \del{ \mathcal{F}(\rho) - \mathcal{F}(\rho_\infty) }^{\vartheta - 1} \dod{ }{t} \mathcal{F}(\rho)
        \geq
        C_\text{B} \norm{ \nabla \mu }_{ L^2(\Omega) },
    \]
    where $C_\text{B} > 0$ is again a positive constant independent of $t$.
    We first immediately see from \cref{i1} that 
    \begin{equation}
        \label{estf}
        \mathcal{F}\del{\rho(t)} - \mathcal{F}(\rho_\infty) 
        \leq
        C_\text{C} (1+t)^{ -\sfrac{1}{(1-2\vartheta)} }
        \qquad \forall t\geq t^*,
    \end{equation}
    for some $C_\text{C} > 0$.
Indeed, it is enough to set $z(t):=\mathcal F(\rho(t))-\mathcal F(\rho_\infty)>0$ to obtain from \cref{i1} that
\begin{align*}
    \dod{ }{t} z(t)^\vartheta + C_\text{A} z(t)^{1-\vartheta}\leq 0,
\end{align*}
    and thus, since $z(t)>0$, 
    \begin{align*}
    \vartheta z(t)^{2(\vartheta-1)}\dod{ }{t} z(t) + C_\text{A} \leq 0,
\end{align*}
entailing
    \begin{align*}
    \frac{\vartheta}{2\vartheta-1} \dod{ }{t}z(t)^{2\vartheta-1} + C_\text{A} \leq 0,
\end{align*}
so that, since $\vartheta\in(0,\tfrac12)$, we infer
    \begin{align*}
     \dod{ }{t}z(t)^{2\vartheta-1}\geq C_\text{A} \frac{1-2\vartheta}{\vartheta},
\end{align*}
which can be integrated over $(t^*,t)$, $t\geq t^*$, giving 
  \begin{align*}
     z(t)\leq \left(z(t^*)^{2\vartheta-1}+{C_\text{A}}\frac{1-2\vartheta}{\vartheta}(t-t^*)\right)^{-\frac 1{1-2\vartheta}}.
\end{align*}
Since $t^*>0$ is fixed, this implies \cref{estf}, after rearranging the constants.

 By comparison in \cref{phi}, recall that the following inequality holds:
    \begin{equation*}
        \int\limits_t^\infty \norm{ \partial_t \rho }_{V^\prime} \dif s 
        \leq
        \int\limits_t^\infty \norm{\nabla \mu}_{L^2(\Omega)} \dif s.
    \end{equation*}
    Integrating \cref{i1} over $(t,\infty)$, for $t\geq t^*$, where we recall that $\lim\limits_{s\to \infty} \mathcal F\del{\rho(s)}  = \mathcal F(\rho_\infty)$, and using \cref{estf}, we then obtain
    \[
        \norm{\rho(t)-\rho_\infty}_{V'} 
        \leq 
        \int\limits_t^\infty \norm{\partial_t\rho}_{V^\prime} \dif s
        \leq  \tilde C(1+t)^{-\sfrac{\vartheta}{(1-2\vartheta)}},
    \]
    for some $\tilde C>0$.
    Additionally, since $\rho(t)$ is uniformly bounded in $V^\prime$ for any $t\geq0$, we also have, up to changing constants,
    \[
        \norm{\rho(t)-\rho_\infty}_{V^\prime} 
    \leq 
    \tilde {\tilde C}(1+t)^{-\sfrac{\vartheta}{(1-2\vartheta)} }
    \qquad \forall t\geq 1,
    \]
    for $\tilde {\tilde C}>0$.
    Therefore, using standard interpolation results (see, e.g., \cite[Proposition 2.1]{LionsMagenes}),
    we infer that there exists $C>0$ such that 
    \[
        \norm{\rho(t)-\rho_\infty}_{L^2(\Omega)}
        \leq 
        C_I \norm{\rho(t)-\rho_\infty}_{V^\prime}^{\sfrac{1}{2}} 
        \norm{\rho(t)-\rho_\infty}_{V}^{\sfrac{1}{2}}
        \leq  
        C(1+t)^{-\sfrac{\vartheta}{2(1-2\vartheta)}}
        \qquad \forall t\geq 1,
    \]
where $C_I>0$ and we exploited the uniform bound in $V$ for $\rho$, for any $t\geq1$, given by \cref{H1}.
\end{proof}

\subsection{Choice of the model}

As stated in \cite{Frigeri2021}, the numerical simulation, analysis, and optimal control of problems involving the nonlocal and singular potentials have not received much attention in the literature. This work aims to provide a computational approach to determine solutions of the nonlocal Cahn--Hilliard system where a logarithmic potential \(F\) is employed alongside a convolution kernel \(K\) displaying a singularity at the origin. 
Due to the computational challenges of the convolution step, we will lay out the setup for a pseudospectral method.

\subsubsection{Domain selection}

Let \( \Omega \) be the unit square \( \Omega \coloneqq B_\infty (\sfrac{1}{2} \Ones[2];\sfrac{1}{2}) = (0,1) \times (0,1)\) and \(T>0\), and recall the definition \( Q \coloneqq \Omega \times (0,T)\). Observe that by definition \(\Omega\) is a Lipschitz domain that satisfies the cone condition, and the results derived from \cref{prop:PW} in \cref{sec:CH_WP} follow. 

Notice that any other domain diffeomorphic to \(\Omega\) can be easily adapted to our methodology; for examples, see \cref{app:diffs}. To see this, suppose that there exists a Lipschitz domain \(\Theta\) and a map \( \Psi: \overline{\Omega} \mapsto \overline{\Theta} \) with \(\Psi\) invertible, with a smooth inverse. 
Consider a point \( y \in \Theta\) and its corresponding preimage \( x = \Psi^{-1}(y)\), then
\begin{align}
	(K \star \rho) \, (y) &= \int\limits_\Theta K(y-u) \rho(u) \dif u 
	=  \int\limits_{\Psi (\Omega)} K(y-u) \rho(u) \dif u 
	=  \int\limits_\Omega K( y - \Psi s ) \, \rho\del{\Psi s}  \abs{ \det\del{J_\Psi (s)} } \dif s		\notag
	\\
	&= \int\limits_\Omega K( \Psi x - \Psi s ) \, \rho\del{\Psi s}  \abs{ \det\del{J_\Psi (s)} } \dif s
	=  \del{ (K \circ \Psi) \star \abs{ \det\del{J_\Psi} } (\rho \circ \Psi) } (x),\label{eq:Conv_Change_of_Vars}
\end{align}
where \(J_\Psi\) is the Jacobian of \(\Psi\). This yields an algorithm for computing a discretisation of the convolution of \(K\) with any function \(u\) over \(\Theta\). Suppose that \(u\) is pointwise discretised over a mesh yielding a vectorial representation \(\bfu\). The convolution matrix, labelled \( C_{K,\Theta} \in \mathcal{M}_n(\R)\), is constructed from the discretisation of a different convolution kernel \(G\) defined over \(\Omega\). Defining \( G \coloneqq K \circ \Psi\) and \( \mathbf{j} \) as the vector  of determinants of \(J_\Psi\) evaluated at the same collocation mesh as \(\bfu\). By virtue of \cref{eq:Conv_Change_of_Vars}, we have that
\(
	K \star u  \approx C_{K,\Theta} \bfu = C_{K\circ \Psi,\Omega} \diag(\mathbf{j}) \, \bfu.
\)
As a result, we only need a procedure to compute the action of a general convolution kernel for functions defined in \(\Omega\). 
This is a standard procedure in pseudospectral methods, where we can define most pseudospectral structures for a fixed domain, which can in turn be associated with other domains via a combination of the chain rule and a conformal map \cite{Nold_2017,Roden2022a}.

We will focus most of our analysis and tests on \(\Omega\); however, we will further exemplify two additional domains. Specifically, these are the unit ball \( B_2( 0; 1 ) \eqqcolon \bigcirc \) and the three dimensional box 
\(  B_\infty (0;1)  \eqqcolon \text{\faCube} \).

\subsubsection{Energy potential}

We recall the logarithmic potential from \cref{eq:LogPotential}:
\(
	F(s) = \frac{\theta}{2} \sbr{ (1+s) \log (1+s) + (1-s) \log (1-s)  }.
\)
Notice that the second hypothesis in \cref{assu:NLCH} is satisfied by \(F\). Additionally, from \cref{eq:DerivativesOfF}, we have that \cref{Further_d=2,Further_d=3} hold as well.

The interest in potentials like \(F\) arises from the fact that certain physical quantities such as tumour mass densities are only defined if the phase field variable \(\rho\) belongs to the physical interval \( [-1,1]\), which cannot be guaranteed for other choices like the double-well potential \cite{Frigeri2021}; see also \cite[\S 1]{Miranville2010}.

\subsubsection{Kernel selection}\label{ssec:Kernel_Selection}

We will mostly employ the Newtonian potential:
\begin{equation}\label{eq:Newtonian_potential}
	K: \R^2 \setminus\{0\} \ni x  \longmapsto  \frac{1}{2\pi} \log \|x\| = \frac{1}{4\pi} \log ( x_1^2 + x_2^2 ) \in \R.
\end{equation}
Notice that \(K\) is differentiable; its gradient is given by
\(
	\nabla K(x) = \frac{1}{2\pi \|x\|^2}  \begin{bsmallmatrix} x_1 & x_2 \end{bsmallmatrix}^\top \hspace{-0.3em}
\).
Additionally, we have that
\begin{align*}
	\cbr{ \partial^\alpha K: |\alpha| = 2 } 
	&\subseteq \frac{1}{2\pi \|x\|^4} \cbr{ \pm(x_1-x_2)(x_1+x_2) , -2x_1 x_2 },
	\\
	\cbr{ \partial^\alpha K: |\alpha| = 3 } 
	&\subseteq \frac{1}{2\pi \|x\|^6} \cbr{ \pm 2x_1 (x_1^2 - 3x_2^2), \pm 2x_2 (3x_1^2 - x_2^2) }.
\end{align*}
%

Moreover, \(-K\) is the fundamental solution of the Poisson equation in \(\R^2\); i.e., \( v = -K \star f\)  satisfies \( -\Delta v = f\) for \( f \in C^2_c (\R^2)\). Alternatively, we can formally write \( \Delta K = \delta_0\). For a more detailed discussion of this construction, we refer the reader to \cite[\S 2.2]{Evans_2010} and \cite[\S2.4 \& Chapter 4]{Gilbarg2001}.
Observe that \(K\) is also radially symmetric with \( k = \log\), and the other hypotheses for condition (c) on the alternative admissibility in \cref{Re:AlternativeAdmissibility} follow as consequence of the following result:

\begin{lemma}[{\citealp[Lemma 18.2.1]{AMT_Th_2024}}]
\label{lem:Suitability_of_Kernel}
	The functions \(K\) and \(\nabla K\) are continuous, and the singularity at the origin is well defined in terms of singular values, thus locally integrable in \( \R^2\) and \(K \in W_{\text{loc}}^{1,1}(\R^2)\).
\end{lemma}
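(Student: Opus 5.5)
The plan is to verify local integrability of $K$ and $\nabla K$ directly in polar coordinates, and then to show that the classical gradient away from the origin is also the distributional gradient on all of $\R^2$. Continuity is immediate: on $\R^2\setminus\{0\}$, $K(x)=\frac{1}{4\pi}\log(x_1^2+x_2^2)$ is a composition of smooth maps. The same holds for $\nabla K(x)=\frac{x}{2\pi\|x\|^2}$. So continuity and smoothness hold away from the singular point, and the only issue is the behaviour at $0$.

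First step: local integrability. Fix a ball $B(0;R)$; away from the origin both functions are bounded on compacts, so it suffices to treat $B(0;R)$. In polar coordinates,
\[
\int_{B(0;R)}|K|\dif x=\int_0^R |\log r|\, r\dif r<\infty,
\qquad
\int_{B(0;R)}|\nabla K|\dif x=\int_0^R \frac{1}{2\pi r}\, 2\pi r\dif r=R<\infty,
\]
since $r\log r\to0$ as $r\to0^+$. Hence $K,\nabla K\in L^1_{\text{loc}}(\R^2)$. This is the sense in which the singularity at the origin is mild.

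Second step: identify the pointwise gradient with the weak gradient. Take $\varphi\in C_c^\infty(\R^2)$ and excise $B(0;\varepsilon)$. On $\R^2\setminus B(0;\varepsilon)$, Green's formula gives
\[
\int_{\|x\|>\varepsilon} K\,\partial_i\varphi\dif x=-\int_{\|x\|>\varepsilon}\partial_iK\,\varphi\dif x-\int_{\|x\|=\varepsilon}K\,\varphi\,\nu_i\dif S,
\]
where $\nu$ is the unit normal on the circle. The boundary term is bounded by $\frac{1}{2\pi}|\log\varepsilon|\,\|\varphi\|_\infty\,2\pi\varepsilon\to0$. Letting $\varepsilon\to0$ and using dominated convergence, justified by the first step, yields $\int K\partial_i\varphi=-\int\partial_iK\,\varphi$. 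Thus $K\in W^{1,1}_{\text{loc}}(\R^2)$. Evenness $K(-x)=K(x)$ is obvious, so (H.1) holds.

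This limiting argument is the only delicate point. It works precisely because $\varepsilon\log\varepsilon\to0$. The analogous argument fails for second derivatives: there the boundary term produces the Dirac mass, $\Delta K=\delta_0$, and the second derivatives are of order $\|x\|^{-2}$, hence not locally integrable. This is consistent with the principal-value remark in \cref{Re:AlternativeAdmissibility}.

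For completeness, condition (c) can then be checked from the displayed third derivatives, which are $O(\|x\|^{-3})$. The monotonicity requirements follow from $k(r)=\frac{1}{2\pi}\log r$: both $k''(r)=-\frac{1}{2\pi r^2}$ and $k'(r)/r=\frac{1}{2\pi r^2}$ are monotone on $\R_{>0}$.
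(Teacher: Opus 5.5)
Your argument is correct and complete. The paper gives no proof of this lemma in the text; it only cites Lemma~18.2.1 of \cite{AMT_Th_2024}, so there is no in-text argument to set yours against. What you supply is a self-contained verification of (H.1) in place of that citation: polar-coordinate bounds for $\int_{B(0;R)}|K|$ and $\int_{B(0;R)}|\nabla K|$, then excising $B(0;\varepsilon)$ and using $\varepsilon\log\varepsilon\to0$ to kill the boundary term, so that the pointwise gradient $x/(2\pi\|x\|^2)$ is the weak gradient on all of $\R^2$. It also shows why the paper needs principal values at second order: the same excision leaves $\Delta K=\delta_0$, and $\|x\|^{-2}\notin L^1_{\text{loc}}(\R^2)$.

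One caution about your closing ``for completeness'' paragraph, which goes beyond the lemma. As printed in \cref{Re:AlternativeAdmissibility}, condition (c) asks for $|\partial^\alpha K|\le C\|x\|^{-1}$ when $|\alpha|=3$. Your $O(\|x\|^{-3})$ bound does not give this near the origin; it only matches the $\|x\|^{-d-1}$ bound in the original admissibility condition of \cite{Bedrossian2011}. Moreover, (c) requires $k$ to be non-increasing. That fails for $k(r)=\frac{1}{2\pi}\log r$ and holds only for $-K$ (equivalently $K_\eta$ with $\eta<0$). Neither point affects the lemma itself, since (H.1) involves no sign or third-derivative condition.
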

\section[A spectral element discretisation for the Newtonian potential]{A spectral element discretisation  for the Newtonian potential}
\label{ch:Spectral_Element_Newtonian_Potential}

Let \(K\) be a kernel satisfying any of the assumptions from the previous section. 
The main objective of the present section is to develop a method to approximate \( K\star \rho\) via the cutoff decomposition of the integral; i.e., for \(\varepsilon>0\), we write
\begin{align}
	(K \star \rho) \, (x) &= \int\limits_{\square} K(x-y) \, \rho(y) \dif y
	\label{eq:Newtonian_Volume}
	\\
	&=  
	\int\limits_{\dsquare}  K(x-y)\, \rho(y) \dif y + 
	\int\limits_{\smallblacksquare} K(x-y)\, \rho(y) \dif y 
	\eqqcolon \mathcal{I}_1[\rho](x) + \mathcal{I}_2[\rho](x) ,
	\label{eq:Newtonian_Decomposition}
\end{align}
where we have introduced the graphical notation
\( \Omega \eqqcolon \square\), \( \Omega \setminus B_\infty (x; \varepsilon)  \eqqcolon \dsquare \), and \( \Omega \cap B_\infty (x; \varepsilon)  \eqqcolon \scaleobj{0.75}{\blacksquare} \). 

The integral term \( \mathcal{I}_1\) does not contain any singularity and thus can be computed by any quadrature method\footnote{In fact, notice the principal value relation
\[
	(K \star \rho) \, (x) = \lim_{\varepsilon \downarrow 0} \int\limits_{\dsquare} K(x-y) \, \rho(y) \dif y.
\]
}.
For the second term, we will use the zero-order Taylor expansion \cite{Cavaliere2014}, also known as a piecewise constant approximation:
\begin{equation}\label{eq:Constant_Approximation_NP}
	\mathcal{I}_2[\rho](x)
	=
	\int\limits_{x - \smallblacksquare} K(u)\, \rho(x-u) \dif u \approx 
	\int\limits_{x - \smallblacksquare} K(u)\, \rho(x)  \dif u =  \rho(x) \int\limits_{x - \smallblacksquare} K(u)  \dif u,
\end{equation}
which is reasonable (as we will see) when \(\varepsilon\) is close to zero due to the region of self-interaction. Here, the Minkowski difference \(x - \scaleobj{0.75}{\blacksquare} \) emerges as a result of the change of variable \( y \equiv x-u\).

The use of a cutoff or truncation approximation is a standard technique in real analysis \cite{Grafakos2014} and in the discretisation of singular integro-differential operators under the presence of a weak (i.e., integrable) singularity. In \cite[Chapter 7]{Li2019}, Taylor approximations are employed along with quadrature methods to overcome the singularity for fractional Laplacians. This approach has further been extended to the variable exponent case \cite{Alzahrani2022,Lei2023} based on the methods developed in \cite{Pozrikidis2016,Minden2020}. Different quadrature-based ﬁnite diﬀerence discretisations are introduced in \cite{Tian2013} for the study of peridynamic models. Later, in \cite{Tian2015}, the authors introduced a nonconforming Galerkin method that replaces the singularity by a constant value approximation. Calculus of differential forms can be used to convert the area integral to a suitable boundary integral. A review with applications in optimal control is presented in \cite{Zhu2021}, and collocation-based approaches are discussed in \cite[Chapter 12]{Martinsson2019}. For uniform meshes, general-purpose methods are available using the Fourier transform and its fast variants; see \cite{Guan2014a,Vico2016,Guan2020,Tiwari2022,Klinteberg2024} and the references therein. Many of the aforementioned methods, extensions, improvements, and alternative approaches are discussed in the thorough monograph \cite{D_Elia2020a}. Other available methods are the Discrete Singular Convolution algorithm \cite{Wei1999,Zhao2003} and wavelet transform methods \cite{Panja2020,Assari2018,Wang2024,Gantumur2005}.

%
%

In the context of the Newtonian potential \cref{eq:Newtonian_potential}, one of the main uses of this function is to determine a particular solution of Poisson's equation. This particular solution can be obtained by direct evaluation of the integral \cref{eq:Newtonian_Volume}. To name some works in this direction, in \cite{Anderson2023} singular and near-singular quadrature is handled by converting integrals on volumetric regions to line integrals using a dilation integral representation; a similar procedure is used in \cite{Shen2022} where a coordinate transformation decomposes the self-interacting region, which is then integrated using a generalised Gaussian quadrature; these methods are based on a differential form approach \cite{Zhu2021,NintcheuFata2012}. Alternatively, the potential can be evaluated over a larger and regular domain \( \Omega_+ \supset \Omega\) for an extended density function \(\rho_+\) such that \( \rho_+ |_\Omega = \rho|_\Omega  \). This is often based on the Fast Multipole Method \cite{Askham2017} (see also \cite[Chapter 7]{Martinsson2019}) or the Fast Fourier Transform \cite{Tiwari2022}.

Spectral methods have also been used in this context; see \cite[\S 8--9]{D_Elia2020a}. The fractional Laplacian has been discretised in Fourier space in \cite{Antil2017}. The classical Cahn--Hilliard system was discretised using a pseudospectral method in \cite{Chen1998} and many variants have been analysed; see \cite{Li2016} and the references therein. Later on, in \cite{Ainsworth2017}, a fractional Cahn--Hilliard system (where the divergence form is replaced with a fractional Laplacian and periodicity is assumed at the boundary) was studied using the Fourier--Galerkin method. A similar system was considered in \cite{Weng2017} (where the chemical potential term involves a fractional Laplacian instead jointly with Dirichlet and Neumann conditions), where a spectral decomposition was considered. A time fractional variant with Dirichlet conditions was also covered in \cite{Du2020} with spectral elements in space.
More recently, viscous Cahn--Hilliard models with non-negative nonlocal diffusion operators have been studied using the Fourier pseudospectral method combined with operator splitting techniques; see 
\cite{Chen2024a,Zhai2021a}.
Finally, let us mention that some recent work on stabilised Fourier spectral schemes for the case of regular potentials, see \cite{Li2023a} and the references therein.

To the best of our knowledge, the numerical study of nonlocal systems like \cref{PDS:NL-CH} (namely, the nonlocal Cahn--Hilliard system with a weakly singular kernel, a singular potential, and no-flux boundary conditions) is scarce in the literature, and the present work provides the first such computation. Following the spirit of \cite{Tian2013}, we use the approximation \cref{eq:Constant_Approximation_NP} to develop a simple, easy-to-follow, and reproducible numerical procedure for \cref{eq:Newtonian_Volume}. This task is carried out in two steps. First, we explicitly compute the integral approximation \(\mathcal{I}_2\) in \cref{eq:Constant_Approximation_NP}, which serves both as a correction to \(\mathcal{I}_1\) and as an error indicator for the method. Second, we employ a spectral element method to precompute a matrix representation of \(\mathcal{I}_1\).

\subsection{The Newtonian potential over a square region}
\label{sec:CH_Newtonian_Exact_Formulae}

In this section, we focus on the evaluation of the integral term at the end of \cref{eq:Constant_Approximation_NP}. This task requires a rigorous definition of the convolution operator. Convolution is often defined over the full space \cite[\S 4.4]{Brezis_2011} due to its relationship with topological groups \cite[\S 1.2]{Grafakos2014}, yet this is a challenge for our framework as we can only determine the values of \(\rho\) from \cref{PDS:NL-CH} which, in turn, provides information up to the boundary (in the sense of traces) of the unit square\footnote{In the following discussion, we will omit the fact that \(\rho\) depends on time and will treat it as a variable that depends merely on space.}. As a result, we can introduce the zero extension operator defined in terms of the indicator function of \(\Omega\):
\(
	\mathcal{E}_1 : L^p(\Omega) \ni \rho  \longmapsto 
	\rho \mathsf{1}_\Omega 
	\in L^p(\R^2)
\).
Since the boundary of \(\Omega\) is a set of zero measure in \(\R^d\), we have that \(\mathcal{E}_1\) is an isometric embedding. This allows us to define, for any \( K \in L^{1}_{\text{loc}} (\R^2)\) and \( x\in \Omega\):
\begin{equation}
\label{eq:Convolution_CH_Extension}
	(K \star \rho) (x) \coloneqq 
	\int\limits_{\R^d} K(x-y) \, \mathcal{E}_1[\rho](y) \dif y 
	= \int\limits_{\Omega} K(x-y) \, \rho(y) \dif y .
\end{equation}
The question of whether \cref{eq:Convolution_CH_Extension} is well defined requires the introduction of a second extension operator 
\(
	\mathcal{E}_2 : L_{\text{loc}}^1(\R^2) 
	\ni K \longmapsto K \mathsf{1}_\Xi 
    \in 
	L^1(\R^2)
\), 
where \( \Xi \subsetneq \R^2\) is the support of the convolution for the difference inside the kernel in \cref{eq:Convolution_CH_Extension}. Specifically, \( \Xi \) is the Minkowski difference of \(\Omega\) with itself, which in this case coincides with the unit \(\ell_\infty\) ball; i.e., 
\(
	\Xi \coloneqq \Omega - \Omega = B_\infty(0;1)
\).
Consequently, Young's convolution inequality \cite[Theorem 4.15]{Brezis_2011} allows us to determine that 
the estimate 
\(
	\| K \star \rho \|_{L^p(\Omega)} \leq \| K \|_{L^1(\Xi)} \| \rho \|_{L^p(\Omega)}
\)
holds for any \( p \in [1,\infty]\). Finally, for any solution of \cref{PDS:NL-CH}, we have that
\( \| K \star \rho \|_{L^\infty(\Omega)} \leq \| K \|_{L^1(\Xi)} \).

%

The computation of \cref{eq:Constant_Approximation_NP} will be done in three stages. First, we compute a simplified version inside a subset of \(\Omega\). Second, we use a closed formula to derive a general result regarding the integral of \(K\) over \(\Xi\). Lastly, we combine the steps used to compute these results to achieve the required quantity.

\subsubsection{Integration in the positive quadrant}

Although the appropriate domain to understand the convolution in \cref{eq:Newtonian_Volume} is the unit square ball centred at the origin, let us first assume that \(K\) is nonzero only within \( \Omega\). 
Employing the graphical notation introduced at the beginning of this section, we have that the difference \(x-y\) is only included in \(\square\) for \(y \in [0,x_1] \times [0,x_2]\), hence we introduce the positive quadrant convolution operator \( \star_+\) as
\begin{equation}\label{eq:1stQuadrant_Int}
	(K \,\star_+ \rho)\, (x) 
	= \int\limits_{\square} \mathcal{E}_1 [K] (x-y) \, \rho(y) \dif y 
	= \int\limits_0^{x_1} \int\limits_0^{x_2} K (u) \, \rho(x-u) \dif u.
\end{equation}
%

\begin{lemma}\label{lem:I_analytical_expression}
	Let \( \mathtt{I}: \overline{\square} \ni x \longmapsto (K \, \star_+ \mathsf{1}_\square) (x)  \in \R\), where the convolution operator is defined as in \cref{eq:1stQuadrant_Int}. Then, \(\mathtt{I}\) admits the closed-form expression
	\begin{align*}
		\mathtt{I}(x) = 
		\frac{x_1 x_2}{4\pi} \big[ 2 \log \|x\| - 3 \big] 
		+ \frac{1}{16} \|x\|^2	
		- \frac{1}{8\pi} ( x_2^2 - x_1^2 ) 	\operatorname{atan2}\del{x_2^2 - x_1^2, 2 x_1 x_2}.
		%
		%
	\end{align*}
	Furthermore, \( \mathtt{I}\) is continuous, symmetric, and nonpositive on the closure \( \overline{\square}\), with \( \mathtt{I}(x) = 0\) along the truncated coordinate cross \(  \{0\} \times [0,1] \cup [0,1] \times \{0\} \). A global minimum is attained at \( x = s \Ones[2] \), where \( s = (\sfrac{\sqrt 2}{2}) \exp(1-\sfrac{\pi}{4}) \). 
\end{lemma}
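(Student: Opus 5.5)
The plan is to compute $\mathtt{I}$ directly from an explicit antiderivative and then derive the qualitative properties from the signs of its partial derivatives. Since $\mathtt{I}(x) = \frac{1}{4\pi}\int_0^{x_1}\int_0^{x_2} \log(u_1^2+u_2^2)\dif u_2 \dif u_1$, I first verify by differentiation that
\[
  \Phi(u) = u_1u_2\big(\log(u_1^2+u_2^2)-3\big) + u_1^2\arctan(u_2/u_1) + u_2^2\arctan(u_1/u_2)
\]
satisfies $\partial_{u_1}\partial_{u_2}\Phi = \log(u_1^2+u_2^2)$ for $u_1,u_2>0$. The function $\Phi$ extends continuously by zero to the coordinate axes, since $u^2\arctan(\cdot)\le \pi u^2/2$ and $u_1u_2\log\|u\|\to 0$. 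Inclusion--exclusion on the rectangle then gives $\mathtt{I}(x) = \Phi(x)/(4\pi)$.

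To reach the stated form, set $\alpha = \arctan(x_2/x_1)$. Then $\arctan(x_1/x_2) = \pi/2-\alpha$, and $\varphi \coloneqq \operatorname{atan2}(x_2^2-x_1^2,2x_1x_2) = 2\alpha-\pi/2$, because $\tan(2\alpha-\pi/2) = -\cot 2\alpha = (x_2^2-x_1^2)/(2x_1x_2)$ and both angles lie in $(-\pi/2,\pi/2)$. Substituting,
\[
  x_1^2\alpha + x_2^2(\pi/2-\alpha) = \tfrac{1}{2}(x_1^2-x_2^2)\varphi + \tfrac{\pi}{4}\|x\|^2,
\]
and dividing by $4\pi$ produces exactly the claimed closed form. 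The elementary properties follow from this expression. Continuity holds because every term is continuous on $\overline{\square}\setminus\{0\}$ and tends to $0$ at the origin. Symmetry holds because swapping $x_1\leftrightarrow x_2$ flips the sign of both $x_2^2-x_1^2$ and $\varphi$. On $\{0\}\times[0,1]$ we have $\varphi=\pi/2$, so $\mathtt{I} = x_2^2/16 - x_2^2/16 = 0$, and the other axis follows by symmetry.

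For the sign and the minimum I use $\partial_{x_1}\mathtt{I}(x) = \frac{1}{4\pi}\int_0^{x_2}\log(x_1^2+u^2)\dif u$, together with its symmetric counterpart. This derivative is strictly increasing in $x_1$, so $\mathtt{I}$ is convex in each variable separately. On any horizontal segment, the maximum of $\mathtt{I}$ is therefore attained at $x_1=0$, where $\mathtt{I}=0$, or at $x_1=1$. On the edge $x_1=1$, the same convexity in $x_2$ bounds $\mathtt{I}(1,x_2)$ by $\max\{\mathtt{I}(1,0),\mathtt{I}(1,1)\} = \max\{0,\mathtt{I}(1,1)\}$. Here
\[
  \mathtt{I}(1,1) = \frac{\log 2-3}{4\pi} + \frac{1}{8} < 0,
\]
and hence $\mathtt{I}\le 0$ on $\overline{\square}$.

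The minimum exists by compactness and is not attained on the far edges, since $\partial_{x_1}\mathtt{I}(1,x_2) = \frac{1}{4\pi}\int_0^{x_2}\log(1+u^2)\dif u > 0$ for $x_2>0$. It is not attained on the axes either, since interior values are negative there. So it is an interior critical point. On the diagonal, $4\pi\,\partial_{x_1}\mathtt{I}(s,s) = s\big(\log(2s^2)-2+\pi/2\big)$, using $\int_0^s \log(s^2+u^2)\dif u = s\log(2s^2) - 2s + 2s\arctan 1$. This vanishes exactly at $s = \frac{\sqrt2}{2}\exp(1-\pi/4)$.

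The step I expect to be the main obstacle is showing that this diagonal critical point is the only interior one, so that it is indeed the global minimiser. My approach would be to use monotonicity: by strict monotonicity of $\partial_{x_1}\mathtt{I}$ in $x_1$, the equation $\partial_{x_1}\mathtt{I}=0$ defines a graph $x_1 = g(x_2)$, and $\partial_{x_2}\mathtt{I}=0$ defines the reflected graph $x_2 = g(x_1)$. I would then show that $g$ is decreasing, via implicit differentiation: the $x_2$-derivative of $\partial_{x_1}\mathtt{I}$ is $\log(x_1^2+x_2^2)/(4\pi)$, which is negative on the relevant set. A decreasing $g$ forces the two curves to meet only on the diagonal. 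If that sign control fails, the fallback is to reduce the critical-point system to one variable along the level curve and argue directly.
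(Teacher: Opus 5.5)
Your derivation of the closed form is correct but takes a different route from the paper, which splits $[0,x_1]\times[0,x_2]$ along the diagonal into two triangles and integrates in polar coordinates; you instead use the mixed antiderivative $\Phi$ with inclusion--exclusion. Continuity, symmetry, the vanishing on the axes, the separate-convexity proof of nonpositivity, the exclusion of boundary minimisers, and the diagonal computation of $s$ are also correct. The gap is in the step you flagged: showing that $(s,s)$ is the only interior critical point. Without that you have not shown it is the global minimiser, and your argument fails twice.

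First, the sign is wrong. On $\{\partial_{x_1}\mathtt{I}=0\}$ we have $\int_0^{x_2}\log(x_1^2+u^2)\,du=0$ with an integrand strictly increasing in $u$, so the integrand is positive at $u=x_2$. Hence $\log(x_1^2+x_2^2)>0$ on the relevant set (e.g.\ $2s^2=e^{2-\pi/2}>1$). Together with $\partial_{x_1}^2\mathtt{I}=\frac{1}{2\pi}\arctan(x_2/x_1)>0$, implicit differentiation gives $g'<0$. With the sign you stated you would have obtained $g'>0$, so that sentence contradicts your own conclusion.

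Second, and more seriously, a decreasing $g$ does not force the graphs $x_1=g(x_2)$ and $x_2=g(x_1)$ to meet only on the diagonal. That implication holds for increasing $g$: if $x_1<x_2$ then $x_2=g(x_1)<g(x_2)=x_1$. For decreasing $g$, however, $g\circ g$ can have fixed points that are not fixed points of $g$; for $g(t)=1-t$ the two curves coincide.

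A clean fix is to divide the two critical-point equations by $x_2$ and $x_1$ respectively. With $t=x_2/x_1$ they read
\[
\log\|x\|^2-2+2\,\frac{\arctan t}{t}=0,
\qquad
\log\|x\|^2-2+2\,\frac{\arctan(1/t)}{1/t}=0 .
\]
Subtracting gives $h(t)=h(1/t)$ for $h(y)=\arctan(y)/y$. This $h$ is strictly decreasing on $(0,\infty)$, since $\arctan y>y/(1+y^2)$ there. Hence $t=1$, every interior critical point lies on the diagonal, and there your equation $\log(2s^2)=2-\pi/2$ has a unique root.

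Alternatively, keep your curve $g$ and prove $|g'|<1$. Using the first equation above, this reduces on the curve to $\arctan t>t/(1+t)$, which holds because the derivatives compare as $1/(1+t^2)>1/(1+t)^2$. Then a critical point $(x_1,x_2)$ and its reflection give $|x_1-x_2|=|g(x_2)-g(x_1)|<|x_1-x_2|$ unless $x_1=x_2$.
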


The proof of \cref{lem:I_analytical_expression} follows by splitting \(\square\) into two triangular regions represented in polar coordinates: one below the hypotenuse of the triangle given by the origin, $x$, and $(0,x_1)$, and the other being its complement in $\square$; i.e., the triangle with vertices in the origin, $x$, and $(x_2,0)$. The associated integral quantities have exact primitives which can then be expressed back in Cartesian coordinates. Symmetry follows from the fact that \( \R^2 \ni x \mapsto \operatorname{atan2}(x_2,x_1) \to [-\pi,\pi] \) gives the angle (in radians) between the positive horizontal axis and the vector \(x\), ensuring the correct quadrant is considered. For a complete detailed derivation, we refer the reader to \cite[\S 19.1.1]{AMT_Th_2024}.

\subsubsection{Integration in the whole domain}\label{ssec:NLCH_Int_Conv_Whole}

Consider once again \cref{eq:Convolution_CH_Extension} with \(K\) defined in the whole of \(\Xi\). Under the change of variables \( u \equiv x - y\):
\[
	(K \star \rho) \,(x) 
	= \int\limits_{\square} \mathcal{E}_2 [K] (x-y) \, \rho(y) \dif y 
	= \int\limits_{x-\square} K (u) \, \rho(x-u) \dif u
	 = \int\limits_{x_2-1}^{x_2} \int\limits_{x_1-1}^{x_1} K (u) \, \rho(x-u) \dif u.
\]
If \(\rho\) is a constant or radially symmetric (recall that our aim is to use the local approximation \cref{eq:Constant_Approximation_NP}), we can then decompose \( x - \square\) into four integration regions in the positive quadrant. Thus, we obtain the following result:

\begin{lemma}[{\citealp[\S 19.1.2]{AMT_Th_2024}}]
\label{lem:J_analytical_expression}
	Let \( \mathtt{J}: \overline{\square} \ni x \longmapsto K \star \mathsf{1}_\square (x)  \in \R\), where the convolution operator is defined as in \cref{eq:Convolution_CH_Extension}. Then, \(\mathtt{J}\) admits a closed-form formula in terms of \( \mathtt{I}\):
\begin{equation}
\label{eq:CH_Conv_Whole_Domain}
	\mathtt{J}(x) = 
	\begin{cases}
		\mathtt{I}(x_1,x_2) + \mathtt{I}(1-x_1,x_2) + \mathtt{I}(x_1,1-x_2) + \mathtt{I}(1-x_1,1-x_2)
			& \text{if } x \in \square,
		\\
		\mathtt{I}(1,s) + \mathtt{I}(1,1-s) 	& \text{if } x_s \in \partial \square, 
	\end{cases}
\end{equation}
	where \( s\) in the last expression is the entry of \(x\) not in the set \( \{0,1\}\) if \(x\) is not a corner, else \( s = 0\). 

	Additionally, \( \mathtt{J}\) is continuous, symmetric, and strictly negative on the closure \( \overline{\square}\). There are four global maxima at the corners of \(\overline{\square}\) and there is a global minimiser at the centre of the domain.
\end{lemma}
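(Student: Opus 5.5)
The plan is to reduce everything to \cref{lem:I_analytical_expression} by splitting the integration rectangle at the singular point. For \(x\in\square\), the convolution is
\[
\mathtt{J}(x)=\int_{x_2-1}^{x_2}\int_{x_1-1}^{x_1}K(u)\,\mathrm{d}u .
\]
Because \(x\) lies in the interior, we have \(x_i-1<0<x_i\). We therefore split the rectangle \(x-\square\) along the coordinate axes into four sub-rectangles, one in each quadrant:
\[
[0,x_1]\times[0,x_2],\qquad [x_1-1,0]\times[0,x_2],\qquad [0,x_1]\times[x_2-1,0],\qquad [x_1-1,0]\times[x_2-1,0].
\]
\(K(u)=\frac{1}{4\pi}\log(u_1^2+u_2^2)\) is invariant under the reflections \(u_1\mapsto-u_1\) and \(u_2\mapsto -u_2\). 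Each sub-rectangle is therefore mapped onto a positive-quadrant rectangle \([0,a]\times[0,b]\) with \((a,b)\in\{x_1,1-x_1\}\times\{x_2,1-x_2\}\). By \cref{eq:1stQuadrant_Int} with \(\rho=\mathsf 1_\square\), each such integral equals \(\mathtt I(a,b)\), and this gives the first line of \cref{eq:CH_Conv_Whole_Domain}. The integrals are finite since \(K\in L^1_{\mathrm{loc}}\) by \cref{lem:Suitability_of_Kernel}.

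For the boundary case, we use the same decomposition. Take for instance \(x_1=0\): two of the four rectangles become degenerate, and \(\mathtt I\) vanishes on the coordinate cross by \cref{lem:I_analytical_expression}. The formula then collapses to \(\mathtt I(1,x_2)+\mathtt I(1,1-x_2)\), after using the symmetry \(\mathtt I(a,b)=\mathtt I(b,a)\) where needed. At corners we get \(\mathtt I(1,0)+\mathtt I(1,1)=\mathtt I(1,1)\). Continuity on \(\overline\square\) follows from continuity of \(\mathtt I\) on \(\overline\square\). Alternatively, one can note that \(x\mapsto\int_{x-\square}K\) is continuous because \(K\in L^1(\Xi)\) and translation is continuous in \(L^1\); this also shows that the boundary formula is the limit of the interior one.

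Symmetry is immediate from the formula: \(\mathtt J\) is invariant under \(x_1\mapsto 1-x_1\), \(x_2\mapsto1-x_2\), and \(x_1\leftrightarrow x_2\). Strict negativity comes from \(\mathtt I\le 0\) on \(\overline\square\). Indeed \(K<0\) on \(B_2(0;1)\setminus\{0\}\), and every positive-quadrant rectangle \([0,a]\times[0,b]\) with \(a,b\le 1\) intersects it in positive measure whenever \(a,b>0\). At least one of the four terms (for the boundary case, \(\mathtt I(1,\cdot)\) with a nondegenerate side) therefore has \(a,b>0\) and is strictly negative.

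The extremal claims are the main obstacle. I would compute \(\nabla\mathtt J\) directly. Differentiating under the integral, \(\partial_{x_1}\mathtt J(x)=\int_{x_2-1}^{x_2}\big[K(x_1,u_2)-K(x_1-1,u_2)\big]\,\mathrm{d}u_2\). Since \(K\) is radially increasing, this is negative for \(x_1<\tfrac12\), zero at \(x_1=\tfrac12\), and positive for \(x_1>\tfrac12\), for every fixed \(x_2\); the same holds in \(x_2\). Hence \(\mathtt J\) is strictly decreasing toward the centre in each coordinate. This gives a unique global minimiser at \((\tfrac12,\tfrac12)\). The maximum over \(\overline\square\) is attained where each coordinate is in \(\{0,1\}\), namely at the four corners, all with value \(\mathtt I(1,1)\) by symmetry. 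The delicate point is justifying the monotonicity comparison \(|(x_1,u_2)|<|(x_1-1,u_2)|\) pointwise. This holds exactly when \(x_1<1-x_1\), so the argument should go through cleanly.
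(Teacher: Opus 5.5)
Your derivation of the closed form follows the route the paper indicates: split $x-\square$ along the axes into four rectangles and fold each into the positive quadrant using the reflection invariance of $K$. The boundary case, continuity and symmetry are fine. Your extremal argument is also correct: the sign of $\partial_{x_1}\mathtt J(x)=\int_{x_2-1}^{x_2}[K(x_1,u_2)-K(x_1-1,u_2)]\,\mathrm{d}u_2$, obtained from the radial monotonicity of $K$, cleanly locates the maxima and the minimiser.

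The step that fails is strict negativity. That $[0,a]\times[0,b]$ meets $B_2(0;1)$, where $K<0$, in a set of positive measure does not make $\int_0^a\int_0^b K$ negative. Whenever $a^2+b^2>1$, the rectangle also has a part of positive measure outside the unit disc, where $K>0$, and the sign depends on which contribution wins. This is exactly the decisive case. At the corners the only surviving term is $\mathtt I(1,1)$, the integral over $[0,1]^2$, which pokes out of the unit disc. Since the corners are the global maxima, strict negativity of $\mathtt J$ on $\overline\square$ is equivalent to $\mathtt I(1,1)<0$. The same objection applies if your remark about $K<0$ on the unit disc was meant to justify $\mathtt I\le 0$; that fact should simply be quoted from \cref{lem:I_analytical_expression}.

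The repair needs an actual quantitative input. With your own monotonicity result, $\mathtt J(x)\le \mathtt J(0,0)=\mathtt I(1,1)$ for all $x\in\overline\square$. The closed form in \cref{lem:I_analytical_expression} then gives $\mathtt I(1,1)=\frac{1}{4\pi}\bigl(\log 2-3+\frac{\pi}{2}\bigr)\approx -0.0586<0$.

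Alternatively, note that $\mathtt I$ is convex in each variable, because $\partial_a\mathtt I(a,b)=\int_0^b K(a,u_2)\,\mathrm{d}u_2$ is increasing in $a$ (and symmetrically in $b$). Since $\mathtt I$ also vanishes on the axes, you get $\mathtt I(a,b)\le a\,\mathtt I(1,b)\le ab\,\mathtt I(1,1)<0$ for $a,b\in(0,1]$. This yields nonpositivity and strictness termwise.
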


Combining \(\mathtt{I}\), \(\mathtt{J}\), and the fact that \(K\) is negative inside the unit ball centred at the origin yields:

\begin{corollary}
	The convolution against \(K\) is a contraction in \(L^p(\square)\). Specifically, it holds that
	\(
		\|K\|_{L^1(\Xi)} 
		< 1
	\).
\end{corollary}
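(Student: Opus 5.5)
The plan is to compute $\|K\|_{L^1(\Xi)}$ almost explicitly from the closed form of \cref{lem:I_analytical_expression}. We control the sign change of $K$ on $\Xi$ separately, and then conclude the contraction property from Young's inequality, which is already recorded in \cref{sec:CH_Newtonian_Exact_Formulae}.

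\textbf{Step 1: reduce to one quadrant.} Since $\Xi = B_\infty(0;1) = (-1,1)^2$ and $K$ is radially symmetric, $|K|$ is invariant under the reflections $x_1\mapsto -x_1$ and $x_2\mapsto -x_2$. Hence
\[
\|K\|_{L^1(\Xi)} = 4\int\limits_{\square}|K(u)|\dif u .
\]

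\textbf{Step 2: split by sign.} On $\square$ we have $K\le 0$ inside the unit Euclidean disc and $K>0$ only on $A \coloneqq \square\setminus \overline{B_2(0;1)}$. Writing $K_+ = \max\{K,0\}$, we get
\[
\int\limits_{\square}|K| = -\int\limits_{\square}K + 2\int\limits_{\square}K_+ .
\]
The first term is $-\mathtt{I}(1,1)$ by definition of $\mathtt{I}$ with $x=(1,1)$. From \cref{lem:I_analytical_expression}, using $\|x\|=\sqrt2$ and the fact that the $\operatorname{atan2}$ term vanishes because $x_2^2-x_1^2=0$, this gives
\[
\mathtt{I}(1,1)=\frac{\log 2-3}{4\pi}+\frac18 \approx -0.0586 .
\]

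\textbf{Step 3: bound the positive part.} This is the only place where some care is needed, since $\mathtt{I}$ does not see the sign change. The crude bound suffices: on $A$ we have $\|u\|\le\sqrt2$, so $K\le \frac{1}{2\pi}\log\sqrt2=\frac{\log 2}{4\pi}$, and $A$ has area $1-\pi/4$. Therefore
\[
\int\limits_{\square}K_+\le \frac{\log 2}{4\pi}\Bigl(1-\frac{\pi}{4}\Bigr)\approx 0.0118 .
\]
If one wants an exact value instead, it can also be computed in polar coordinates.

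\textbf{Step 4: conclude.} Combining the three steps,
\[
\|K\|_{L^1(\Xi)}\le 4\bigl(0.0586+2\cdot 0.0119\bigr)\approx 0.33<1 .
\]
These numbers should be written as rigorous rational upper bounds, for instance using $\log 2<0.6932$ and $\pi>3.1415$. Young's convolution inequality, as stated earlier in the paper, then gives $\|K\star\rho\|_{L^p(\Omega)}\le \|K\|_{L^1(\Xi)}\|\rho\|_{L^p(\Omega)}$ for every $p\in[1,\infty]$. Since the convolution is linear, this is a strict contraction on $L^p(\square)$.
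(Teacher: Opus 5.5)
Your proof is correct and follows essentially the route the paper indicates. The closed form of \(\mathtt{I}\) at \((1,1)\) gives the signed integral of \(K\) over each quadrant of \(\Xi\), and the sign of \(K\) relative to the unit disc turns that signed integral into the \(L^1\) norm. Your sup-times-area bound on the positive part is cruder than necessary (and when you make the numerics rigorous, the upper bound on \(-\mathtt{I}(1,1)\) needs a lower bound on \(\log 2\), not the upper one you quote): since \(B_2(0;1)\subset\Xi\) and \(\int_{B_2(0;1)}K = -\tfrac14\), one gets exactly \(\|K\|_{L^1(\Xi)} = 4\mathtt{I}(1,1)+\tfrac12 = 1-\tfrac{3-\log 2}{\pi}\approx 0.266\).
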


\subsubsection{Integration over a small neighbourhood}

To evaluate \( \mathcal{I}_2[\rho] \), as introduced in \cref{eq:Constant_Approximation_NP}, we now focus on computing the integral factor that encodes the interaction between the kernel and the geometry near \( x \). This quantity is defined by
\begin{equation}\label{eq:G_Diagonal_Approximation_Newtonian}
	\mathtt{G}_\varepsilon(x) 
	\coloneqq
	\int\limits_{x - \smallblacksquare} K(u) \dif u,
\end{equation}
where \( \scaleobj{0.75}{\blacksquare} = \Omega \cap B_\infty (x; \varepsilon) \) and \( \varepsilon \leq \sfrac{1}{2} \). The function \( \mathtt{G}_\varepsilon \) depends only on the position \( x \), the shape of the kernel \( K \), and the intersection geometry.

In analogy with \(\mathtt{J}\), it is possible to derive a closed-form expression for \(\mathtt{G}_\varepsilon\) in terms of \(\mathtt{I}\).
For convenience, we introduce the binary\footnote{%
	If \(a\) and \(b\) are vectors, we overload this operator by applying it entrywise in a broadcastable fashion. For instance, if \(a \in \R^2\) and \(b\) is a constant, then \( a\sqcap b = (a_1 \sqcap b, a_2 \sqcap b)^\top \hspace{-0.2em} \). 
	}
operator \( a \sqcap b \coloneqq \min \{a,b\}\). 

\begin{lemma}[{\citealp[\S 19.1.3]{AMT_Th_2024}}]
\label{lem:G_analytical_expression}
	Let \(x \in \overline\square\) and select a neighbourhood size \(\varepsilon \leq \sfrac{1}{2}\). The Minkowski difference between \(x\) and the portion of \( B_\infty (x; \varepsilon) \) inside \(\square\) is given by 
	\(
		x - \scaleobj{0.75}{\blacksquare} =
		\del{-(1-x_1 \sqcap \varepsilon), x_1 \sqcap \varepsilon}
		\times 
		\del{-(1-x_2 \sqcap \varepsilon), x_2 \sqcap \varepsilon}.
	\)
	Thus, \(\mathtt{G}_\varepsilon\) admits a closed-form formula in terms of \( \mathtt{I}\):
\begin{equation}\label{eq:CH_Conv_Neighbourhood_eps}
	\mathtt{G}_\varepsilon (x) = 
	\mathtt{I}\del{ x \sqcap \varepsilon }
	+
	\mathtt{I}\del{ (1-x_1, x_2) \sqcap \varepsilon }
	+
	\mathtt{I}\del{ (x_1,1-x_2) \sqcap \varepsilon }
	+
	\mathtt{I}\del{ (\Ones[2]-x) \sqcap \varepsilon }.
\end{equation}
	Additionally, \( \mathtt{G}_\varepsilon \) is strictly negative, symmetric, and there are four global maxima at the corners of \( \overline{\square}\). 
\end{lemma}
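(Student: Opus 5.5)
The natural route is to reduce everything to the positive-quadrant primitive \(\mathtt{I}\) of \cref{lem:I_analytical_expression}, exactly as in \cref{lem:J_analytical_expression}. First I will identify the integration region. By definition, \(\scaleobj{0.75}{\blacksquare} = \Omega\cap B_\infty(x;\varepsilon)\) is the box
\[
(x_1-x_1\sqcap\varepsilon,\ x_1+(1-x_1)\sqcap\varepsilon)\times(x_2-x_2\sqcap\varepsilon,\ x_2+(1-x_2)\sqcap\varepsilon),
\]
because \(\Omega=(0,1)^2\) clips the \(\varepsilon\)-box coordinatewise. Applying \(u=x-y\) reflects and translates this box. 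This gives the stated Minkowski difference: in each coordinate \(u_i\) ranges over \((-(1-x_i)\sqcap\varepsilon,\ x_i\sqcap\varepsilon)\), which is how the parenthesisation in the statement should be read.

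Next I split this rectangle along the coordinate axes into (at most) four subrectangles, one in each quadrant, each having a corner at the origin. Since \(K(u)=\tfrac{1}{4\pi}\log(u_1^2+u_2^2)\) is invariant under \(u_1\mapsto -u_1\) and \(u_2\mapsto -u_2\), each quadrant piece can be reflected into \([0,a]\times[0,b]\). Taking \(\rho\equiv1\) in \cref{eq:1stQuadrant_Int}, the integral of \(K\) over \([0,a]\times[0,b]\) is exactly \(\mathtt{I}(a,b)\). The four pieces have side lengths
\[
(x_1\sqcap\varepsilon,\,x_2\sqcap\varepsilon),\quad
((1-x_1)\sqcap\varepsilon,\,x_2\sqcap\varepsilon),\quad
(x_1\sqcap\varepsilon,\,(1-x_2)\sqcap\varepsilon),\quad
((1-x_1)\sqcap\varepsilon,\,(1-x_2)\sqcap\varepsilon).
\]
Summing them gives \cref{eq:CH_Conv_Neighbourhood_eps}. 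Degenerate pieces on \(\partial\square\), where a side length is zero, contribute \(0\) because \(\mathtt{I}\) vanishes on the coordinate cross. The formula therefore holds on all of \(\overline\square\), and \(\varepsilon\le\sfrac12\) guarantees every argument lies in \(\overline\square\), where \(\mathtt{I}\) is defined.

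For the qualitative claims, symmetry under \(x_1\mapsto1-x_1\), \(x_2\mapsto1-x_2\) and \(x_1\leftrightarrow x_2\) follows directly from the formula, since these maps permute the four terms and \(\mathtt{I}\) is symmetric. Strict negativity follows because \(\mathtt{G}_\varepsilon(x)\) is the integral of \(K\) over a set of positive measure contained in \(B_\infty(0;\sfrac12)\subset B_2(0;1)\), where \(K<0\) almost everywhere. Alternatively, each \(\mathtt{I}\le0\), and at least one term has both arguments positive and hence is strictly negative.

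The main obstacle is locating the maxima at the corners. My plan is a monotonicity argument. The function \(a\mapsto\mathtt{I}(a,b)\) is decreasing for \(a\in[0,\varepsilon]\) when \(\varepsilon\le\sfrac12\), since \(\partial_a\mathtt{I}(a,b)=\int_0^b K(a,u_2)\dif u_2<0\) because \(K<0\) there. Now consider the region of integration \(x-\scaleobj{0.75}{\blacksquare}\) as a function of \(x\).
\begin{itemize}
\item It always contains a quadrant-corner rectangle of side \(\varepsilon\times\varepsilon\) when \(x\) is a corner.
\item More generally, its total side lengths per coordinate are \(x_i\sqcap\varepsilon+(1-x_i)\sqcap\varepsilon\ge\varepsilon\), with equality exactly when \(x_i\in\{0,1\}\).
\end{itemize}
Because \(K<0\), enlarging the region strictly decreases the integral. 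The delicate point is comparing rectangles that are not nested but only have equal or greater side lengths. I would handle it by showing that, for a fixed side length \(L=\alpha+\beta\), the integral of \(K\) over \((-\beta,\alpha)\) is maximised (least negative) when the interval is furthest from the origin, that is, one-sided. This uses that \(|K|\) is radially decreasing. Then, per coordinate, the corner configuration \((0,\varepsilon)\) dominates, which yields the global maxima exactly at the four corners.
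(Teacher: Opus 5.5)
Your derivation of the closed form follows the paper's route for $\mathtt{J}$. You clip $B_\infty(x;\varepsilon)$ to $\square$, substitute $u=x-y$, cut $x-\scaleobj{0.75}{\blacksquare}$ along the axes and reflect each piece into the positive quadrant. Your reading $-\bigl((1-x_i)\sqcap\varepsilon\bigr)$ of the left endpoints is the correct one, and your symmetry and strict-negativity arguments are fine.

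The paper does not argue the corner maxima in the text. Your rearrangement plan can be made to work, but the obstacle you anticipate does not arise: the rectangles are in fact nested.
\begin{itemize}
\item Since $x_i+(1-x_i)=1$ and $\varepsilon\le\sfrac{1}{2}$, one of $x_i$, $1-x_i$ is at least $\varepsilon$. Hence $\max\{x_i\sqcap\varepsilon,\,(1-x_i)\sqcap\varepsilon\}=\varepsilon$ in each coordinate.
\item So $x-\scaleobj{0.75}{\blacksquare}$ always contains one of the four squares $(\pm[0,\varepsilon])\times(\pm[0,\varepsilon])$. These are exactly the regions $c-\scaleobj{0.75}{\blacksquare}$ for the four corners $c$.
\item Because $K<0$ on $B_\infty(0;\varepsilon)$, this gives $\mathtt{G}_\varepsilon(x)\le\mathtt{I}(\varepsilon,\varepsilon)=\mathtt{G}_\varepsilon(c)$. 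Your own side-length count makes the inequality strict away from the corners.
\end{itemize}
In terms of the formula, one of the four terms always equals $\mathtt{I}(\varepsilon,\varepsilon)$. The other three are nonpositive and vanish together only at a corner.

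If you keep the rearrangement version, shrink inside the original rectangle before sliding. Sliding first to a one-sided interval $(0,L)$ with $L$ up to $2\varepsilon$ can leave the unit disc when $\varepsilon$ is close to $\sfrac{1}{2}$ (the corner of the slid rectangle has norm up to $\sqrt{5}\,\varepsilon$). There $K>0$, so the step ``enlarging the region decreases the integral'' is no longer justified.
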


\subsubsection{Quality of the approximation}

For any neighbourhood radius \(\varepsilon \leq \sfrac{1}{2}\), let us define \( \widehat{\mathcal{I}}_2[\rho](x;\varepsilon)  \coloneqq \rho(x) \mathtt{G}_\varepsilon(x)\) for all \( x \in \overline{\square}\).
Combining the previous lemmas, we arrive at the main result of this section:

\begin{theorem}\label{th:QualityApprox}
	Let \(\rho \in {L^\infty(\square)} \) be bounded inside \([0, \alpha]\) for some \(\alpha > 0\). 
	Additionally, let \(x \in \overline\square\) and select a neighbourhood size \(\varepsilon \leq \sfrac{1}{2}\). The expression
	\begin{align*}
	(K \star \rho) \, (x) &\approx 
	\mathcal{I}_1[\rho](x) + \widehat{\mathcal{I}}_2[\rho](x;\varepsilon) 
	=  
	\int\limits_{\dsquare}  K(x-y)\, \rho(y) \dif y + 
	\rho(x) \mathtt{G}_\varepsilon(x),
	\end{align*}
	provides a \( O( \varepsilon^2 \log \varepsilon ) \) pointwise approximation of \cref{eq:Newtonian_Volume}, which is exact for constant \(\rho\).
\end{theorem}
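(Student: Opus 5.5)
The error of the approximation is carried entirely by the near field. Since $\mathcal{I}_1$ is computed exactly, the decomposition \cref{eq:Newtonian_Decomposition} gives
\[
(K\star\rho)(x) - \mathcal{I}_1[\rho](x) - \widehat{\mathcal{I}}_2[\rho](x;\varepsilon)
= \int\limits_{x-\smallblacksquare} K(u)\,\bigl(\rho(x-u)-\rho(x)\bigr)\dif u .
\]
Exactness for constant $\rho$ is then immediate, because the integrand vanishes identically. More concretely, \cref{lem:G_analytical_expression} shows that $\mathtt{G}_\varepsilon(x)$ is exactly $\int_{x-\smallblacksquare}K(u)\dif u$, so $\rho(x)\mathtt{G}_\varepsilon(x)=\mathcal{I}_2[\rho](x)$ whenever $\rho$ is constant.

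For the rate, I will first use only the stated hypothesis $0\le\rho\le\alpha$. This gives $|\rho(x-u)-\rho(x)|\le\alpha$, so the error is at most $\alpha\int_{x-\smallblacksquare}|K(u)|\dif u$. Because $\varepsilon\le\sfrac12$, the region $x-\smallblacksquare$ lies inside $B_\infty(0;\varepsilon)\subset B_2(0;1)$, where $K\le 0$. Hence
\[
\int\limits_{x-\smallblacksquare}|K(u)|\dif u = -\mathtt{G}_\varepsilon(x) \le -\int\limits_{B_\infty(0;\varepsilon)}K(u)\dif u = -4\,\mathtt{I}(\varepsilon,\varepsilon),
\]
using the four-quadrant decomposition of \cref{lem:G_analytical_expression} with $x$ interior at distance at least $\varepsilon$ from $\partial\square$.

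Next I evaluate $\mathtt{I}(\varepsilon,\varepsilon)$ from the closed form in \cref{lem:I_analytical_expression}. The $\operatorname{atan2}$ term vanishes because $x_2^2-x_1^2=0$. What remains is
\[
\mathtt{I}(\varepsilon,\varepsilon)=\frac{\varepsilon^2}{4\pi}\bigl[2\log(\sqrt2\,\varepsilon)-3\bigr]+\frac{\varepsilon^2}{8}.
\]
Therefore $|\mathtt{G}_\varepsilon(x)|\le\frac{2}{\pi}\varepsilon^2|\log\varepsilon|+C\varepsilon^2$, which is $O(\varepsilon^2\log\varepsilon)$ as $\varepsilon\to0$. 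I take $O(\varepsilon^2\log\varepsilon)$ to mean a bound of the form $C\varepsilon^2|\log\varepsilon|$. For boundary points the region is smaller, and I treat it by monotonicity: each term $\mathtt{I}(\cdot\sqcap\varepsilon)$ is nonpositive and, being an integral of a nonpositive kernel over a shrinking rectangle, is decreasing in modulus. This bound is uniform in $x\in\overline\square$, which gives the pointwise estimate with constant proportional to $\alpha$.

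The main obstacle is whether this crude bound is really what the statement means. It shows that the approximation error is no worse than the size of the discarded near-field integral, which is itself $O(\varepsilon^2\log\varepsilon)$. If a finer statement were intended, I would assume $\rho$ Lipschitz near $x$, bound $|\rho(x-u)-\rho(x)|\le L\|u\|$, and integrate $\|u\|\,|\log\|u\||$ in polar coordinates over $B_2(0;\sqrt2\varepsilon)$, obtaining $O(\varepsilon^3|\log\varepsilon|)$. Under the stated $L^\infty$ hypothesis only the $\varepsilon^2\log\varepsilon$ rate is available, so I will present that argument, with the explicit evaluation of $\mathtt{I}(\varepsilon,\varepsilon)$ as the key step.
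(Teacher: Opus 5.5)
Your proposal is correct and follows essentially the paper's own argument: you bound the near-field error by $\alpha\,|\mathtt{G}_\varepsilon(x)|$ using $K\le 0$ on $x-\bigl(\Omega\cap B_\infty(x;\varepsilon)\bigr)$, then take the worst case $\mathtt{G}_\varepsilon=4\mathtt{I}(\varepsilon,\varepsilon)=\frac{\varepsilon^2}{2\pi}\bigl[4\log\varepsilon+\log 4-6+\pi\bigr]$ from the closed form for $\mathtt{I}$. Your inclusion $x-\bigl(\Omega\cap B_\infty(x;\varepsilon)\bigr)\subseteq B_\infty(0;\varepsilon)$ also justifies something the paper only asserts, namely that this interior value is the global extremum of $\mathtt{G}_\varepsilon$.
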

\begin{proof}
	The result follows by construction. For the error term, notice that there is a region of global minima determined by the closure of the ball \( B_\infty( \sfrac{1}{2} \Ones[2]; \sfrac{1}{2}-\varepsilon )\). To see this, let \( x \) be an element of this set, then 
	its entries lie in the interval \( [\varepsilon, 1-\varepsilon] \) for which \cref{eq:CH_Conv_Neighbourhood_eps} reduces to
	\begin{equation}
		\label{eq:CH_Conv_Neighbourhood_eps_min}
		\mathtt{G}_\varepsilon (x) = 
		4\mathtt{I}(\varepsilon, \varepsilon)
		=
		\frac{\varepsilon^2}{2\pi} \big[ 4 \log \varepsilon + \log 4 - 6 + \pi \big]
		\sim O( \varepsilon^2 \log \varepsilon ).
	\end{equation}	
	
	Observe that if \( u \in x - \scaleobj{0.75}{\blacksquare}\), then \( \|u\|_2 \leq \sqrt{2} \varepsilon < 1  \), then \( \|K\|_{L^1 (x - \scaleobj{0.75}{\blacksquare})} = \abs{ \mathtt{G}_\varepsilon(x)} < 1  \). Then
	\begin{align*}
		\abs{ \mathcal{I}_2[\rho](x) - \widehat{\mathcal{I}}_2[\rho](x;\varepsilon) }
		&
		\leq 
		\int\limits_{x - \smallblacksquare} \abs{ K(u) } \abs{ \rho(x-u) - \rho(x) } \dif u
		\leq 
		\|K\|_{L^1 (x - \scaleobj{0.75}{\blacksquare})}	
		\operatorname*{ess\,sup}_{y \in \smallblacksquare} \abs{ \rho(y) - \rho(x) }.
	\end{align*}
	The bounds on \(\rho\) and the sign of \( \mathtt{G}_\varepsilon\) yield
	\begin{equation}\label{ineq:Pointwise_Bound}
		\abs{ \mathcal{I}_2[\rho](x) - \widehat{\mathcal{I}}_2[\rho](x;\varepsilon) } \leq -\alpha \mathtt{G}_\varepsilon (x) .
	\end{equation}
	Maximising \cref{ineq:Pointwise_Bound} over \(\square\) and recalling \cref{eq:CH_Conv_Neighbourhood_eps_min} yields the final error estimate.
\end{proof}

The absolute value of \cref{eq:CH_Conv_Neighbourhood_eps_min} decreases as \(\varepsilon\) becomes smaller. In fact, dividing \(\varepsilon\)  by \(10\) approximately reduces the value of \( |\mathtt{G}_\varepsilon | \) by a factor of \(100\). 
%
%
Thus \( |\mathtt{G}_\varepsilon | \) already reaches machine precision for \( \varepsilon \lesssim 10^{-8}\).



Our first set of numerical experiments will test the validity of the approximation
\begin{equation}\label{eq:Conv_Singular_Approximation_Scheme}
	(K \star \rho) \, (x) \approx
	\int\limits_{\dsquare}  K(x-y)\, \rho(y) \dif y + 
	\rho(x) \mathtt{G}_\varepsilon(x),
\end{equation}
where the first integral will be computed numerically using a spectral element method, which is introduced in the next section. 
The reader may notice that 
\cref{eq:CH_Conv_Neighbourhood_eps_min} yields the additional result that the approximation scheme \cref{eq:Conv_Singular_Approximation_Scheme} displays an error that behaves like \(\varepsilon^2\) and is mesh-independent. As a result, for small values of \(\varepsilon\), we could, in principle, just compute the integral over \({\dsquare}\). 
Notwithstanding, as we know an exact expression for constant terms, there is no harm in including \(\rho(x) \mathtt{G}_\varepsilon(x)\) for extra accuracy.

\subsection{The spectral element method}\label{sec:NLCH_SEM}

The spectral element method (SEM) combines the flexibility of the finite element method (FEM) to study complicated geometries while also achieving high convergence rates that are only limited by the regularity of the solution and not by the underlying functional expansions.
To achieve this, spectral element methods employ a pseudospectral grid on each element and combine the elements to create a more complicated domain. 

The way different elements interact with each other determines the design of a SEM. Patching methods generalise collocation by dealing with the strong form of the PDE directly, imposing matching conditions (patching) between individual elements \cite{Komatitsch1998}; see also \cite[Chapter 22]{Boyd2001}, \cite[Chapter 24]{Martinsson2019}, and \cite{Bernardi2001} and the references therein. Galerkin SEM solves the weak form of the PDE system \cite{Patera1984}; see also \cite[Chapter 10]{Quarteroni2017} and \cite{Gopalakrishnan2008}. For certain classes of PDE systems, these two approaches are essentially equivalent \cite{Funaro1986,Young2019}. 
For additional reviews and references on SEM, we refer the reader to \cite{Deville2002,Karniadakis2005,Canuto2006,Canuto2007} and the more recent works \cite{Shen2011,Giraldo2020}.

We will use the package \texttt{MultiShape} \cite{Roden2022,Roden2022a}, which implements the SEM using a patch method built upon the pseudospectral \texttt{2DChebClass} package \cite{Goddard-2017}. This link allows for precomputing several quantities on individual geometric elements that make up a multishape\footnote{\texttt{MultiShape}, with capital letters, refers to the software package, while a lower-case \embh{multishape} refers to a complex domain of interest.}.
The complicated domain, i.e., multishape, is constructed by combining these elements and imposing continuity matching conditions at the intersection boundaries between them. The resulting package has shown remarkable effectiveness in solving complicated nonlinear and nonlocal problems \cite{Roden2023}. Detailed documentation of the package can be found in \cite[Chapter 6]{Roden2023a}.

\begin{wrapfigure}[8]{r}{0.2\textwidth} 
    \centering
    \vspace{-1.25em}
    \includegraphics[scale=0.75]{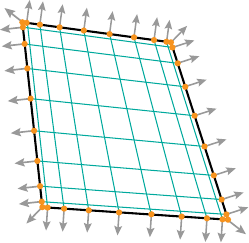}
    \caption{\texttt{Quadrilateral}.}
    \label{sub:MS_Quadrilateral}
\end{wrapfigure}
Among the shapes available in \texttt{MultiShape}, we will rely on the \texttt{Quadrilateral} class
to efficiently compute the Newtonian potential term \(\mathcal{I}_1\) in \cref{eq:Newtonian_Decomposition}. As its name suggests, this class allows us to build planar quadrilaterals that together compose a multishape; see \cref{sub:MS_Quadrilateral}. 
This tool allows us to compute the action of the convolution kernel in \( \Omega \setminus B_\infty (x; \varepsilon)  = \dsquare \) for different collocation points of \(x \in \mathcal{X}\). 
The action determines the rows of a matrix, which we denote by \( \Convh \). 

Throughout this study, we will assume that two-dimensional spectral meshes are generated by the same number of points in the horizontal and vertical axes, which we will label \(N\) and sometimes will refer to as the \embh{grid resolution}. 
We stress that this symmetry is introduced solely to avoid introducing separate resolution parameters for each axes; the theory, constructions, and computational results remain unchanged if the two directions are discretised with a different numbers of points.

\begin{center}
\begin{tcolorbox}[colback=algcolor, width=0.9\linewidth, colframe=white, boxrule=0pt, breakable=true]
\fontsize{9}{11}\selectfont
\begin{algorithm}[H]
\caption{Multishape method for singular kernels}
\label{alg:MultiShape}
\KwIn{%
	\(N \in \N\) (number of collocation points by dimension), 	\newline
	\(\varepsilon \in (0,\sfrac{1}{2})\) (neighbourhood radius), 
	\(\alpha \in \R_{>0}\) such that \(\alpha N \in \N\) (scaling factor).
}
\KwResult{\( \Convh \in \mathcal{M}_{N^2}(\R) \) approximating \cref{eq:Conv_Singular_Approximation_Scheme}. }
	Discretise \( \overline{\square} \) using a spectral mesh, yielding the collocation points \( \mathcal{X} = \{x_i\}_{i \in \llb 1,N^2 \rrb } \) \; 
	Create a floating box \( \boxdot  \gets \overline{B}_\infty(0;\varepsilon) \)		\;
\For{\(i\in \llb 1,N^2 \rrb \)}{
	%
	Compute the local box \( \scaleobj{0.75}{\blacksquare} \gets \overline{\square} \cap (x_i + \boxdot) \) 	\;
	Discretise \( \scaleobj{0.75}{\blacksquare} \) using \((\alpha N)^2\) collocation points 		\;
	Determine the holed box \( \dsquare \gets \overline{\square} \setminus (x_i + \boxdot)  \)  	\;
	Construct a multishape over  \(\dsquare\) with \( (\alpha N)^2 \)  collocation points per element		 	\;
	%
	%
	\( \Convh_i \gets \sbr{ \mathbb{I}_{\dsquare} \circ \del{ K(x_{1,i} - \dsquare, x_{2,i} - \dsquare)  } } \Pi_{\dsquare \to \square}  \)	 	\; 
	%
}
\end{algorithm}
\end{tcolorbox}
\end{center}

\cref{alg:MultiShape} proposes a procedure to iteratively build \( \Convh \in \mathcal{M}_{N^2}(\R)\). 
In a nutshell, a neighbourhood is traced around each collocation point, and a multishape is determined over its complement. We then approximate the matrix-row by interpolating the convolution matrix evaluated on the multishape to the original box.

The operator \( \Pi_{\dsquare \to \square}  \) interpolates the multishape discrete convolution to the baseline collocation points in \(\square\). As a result, we do not need to interpolate any quantity into the multishape to compute its partial convolution with \(K\). The operator \( \mathbb{I}_{\dsquare} \) consists of the Clenshaw--Curtis quadrature weights (see \cite[Equation 35]{Nold_2017}), extended to the multishape.

After evaluating \cref{alg:MultiShape}, we can compute the correction factor \cref{eq:G_Diagonal_Approximation_Newtonian} exactly from \cref{eq:CH_Conv_Neighbourhood_eps} and use a time stepping algorithm or other time-dependent PDE solver to solve \cref{PDS:NL-CH}.

The location of the collocation point and the size of the floating box \( \boxdot\) with respect to the base box \(\overline{\square}\) determines the geometry of \(\dsquare\).  We next present and test one method for computing the multishape over \(\dsquare\).

\subsubsection{Partition strategy}\label{ssec:Part_MS}

\begin{figure}[htbp]
\begin{center}
	\includegraphics[scale=0.8, page=2]{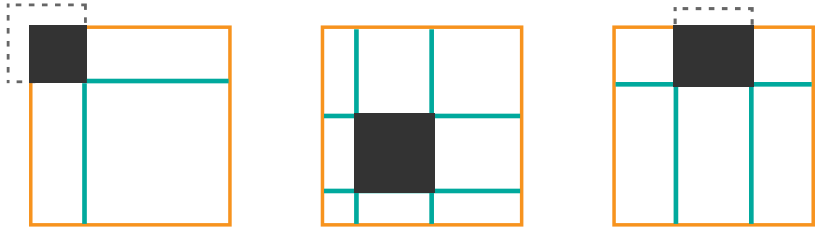}
	\caption{Maximal quadrilateral partitions based on 
	a neighbourhood around a collocation point.}
\label{fig:Multishape_Maximal}
\end{center}
\vspace{-0.5\baselineskip}
\end{figure}

We propose a partition of \(\dsquare\) based on the minimal number of rectangles required to determine a multishape. It turns out that this is the maximal number of quadrilaterals needed for such task that 
subdivide \(\dsquare\) into additional subregions whose sides extend exactly two sides of \( \scaleobj{0.75}{\blacksquare} \) to the boundary of \(\square\). We call this a \embh{maximal partition}\footnote{We could consider instead a partition based on the minimal number of quadrilaterals required to determine a multishape. This is the focus of \cref{App:MinPart_MS}, where we see that degenerate vertices limit the accuracy of the method.}.

We distinguish three partition methods based on the intersection with \( \scaleobj{0.75}{\blacksquare} \):
\begin{enumerate}[label=(\alph*), leftmargin=1.7em, itemsep=0.em]
\item
\( \scaleobj{0.75}{\blacksquare} \) is located at a corner of \(\overline{\square}\). Then, we only require \embh{3} elements to partition \(\dsquare\). These elements are determined by extending the sides of \( \scaleobj{0.75}{\blacksquare} \) inside \(\overline{\square}\) towards the border of the latter.
%
\item
\( \scaleobj{0.75}{\blacksquare} \) is completely embedded inside \(\square\) (i.e., \( \scaleobj{0.75}{\blacksquare} \Subset \square \)). Then we require \embh{8} elements determined by extending all sides of \( \scaleobj{0.75}{\blacksquare} \).
%
\item
\( \scaleobj{0.75}{\blacksquare} \) shares one side, and one side only, with \(\overline{\square}\). Then we need \embh{5} elements, which can be determined similarly as in case \textbf{(b)}.
\end{enumerate}

The procedure is depicted in \cref{fig:Multishape_Maximal}. There, a black-solid square represents \( \scaleobj{0.75}{\blacksquare} \) while the transversal segments inside \(\square\) (in teal) represent the additional lines required to determine a multishape in \(\dsquare\). 
Observe that if \( (\alpha N)^2\) is the number of collocation points used to discretise \( \scaleobj{0.75}{\blacksquare} \), then we will deal with at most \( 8 (\alpha N)^2 \) points at once for most iterations within the inner loop of \cref{alg:MultiShape}. 


\subsubsection{Quality of approximation against constant functions}
\label{ssec:Quality_test}

We previously derived in \cref{sec:CH_Newtonian_Exact_Formulae} a closed-form expression for the convolution of the Newtonian potential against the constant function \( \mathsf{1}_\square\), which we denoted \(J\); see \cref{eq:CH_Conv_Whole_Domain}. 
We now present a set of numerical tests employing \cref{alg:MultiShape} to test the validity of the approximation \cref{eq:Conv_Singular_Approximation_Scheme} and assess its accuracy against its analytical counterpart. For this purpose, we use the pointwise absolute error
\begin{equation}
\label{eq:MS_error_NP_conv_abs}
	e_{\varepsilon} (x) = {  \big| \mathtt{J}(x) - \mathcal{I}_1[1_\square](x) - \mathtt{G}_\varepsilon(x) \big|  }.
\end{equation}
We do not report the relative pointwise error, since \( \mathtt{J}(x) \) vanishes at certain points of the domain. In such regions, division by small values would artificially inflate the relative error and obscure the actual accuracy of the approximation.

First, we evaluated \cref{eq:MS_error_NP_conv_abs} for \(\varepsilon \in \{10^{-2},10^{-5}\} \).
We selected comparable scaling factors, thus for \( N=10\) we have \( \alpha \in \{2,10,16\}\), while for \( N=20\) we have \( \alpha \in \{1,5,8\}\), thus the same number of points per element is considered when running \cref{alg:MultiShape}.



%
%

\begin{figure}[h!]
\centering
\setlength{\unitlength}{1cm}  
    \subcaptionbox{Swarm plots for scalings of \(N=10\).}[7.9cm]{
    \begin{picture}(8,6.6)   
	\put(0,0){
	    \includegraphics[scale=0.98]{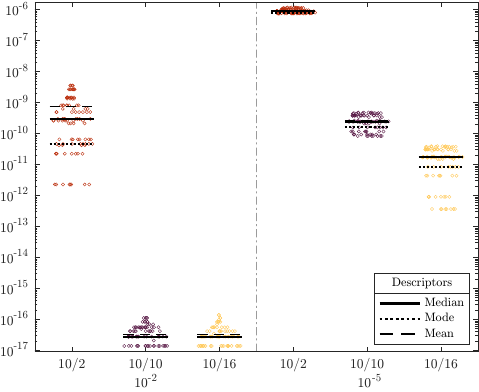}
	}
	\put(4.2,-0.3){\makebox(0,0)[c]{\fontsize{6.5}{7.5}\selectfont Instance configuration for $\alpha   \in \{2,10,16\}$ }}
	\put(-0.35,3.){\rotatebox{90}{ \fontsize{6.5}{7.5}\selectfont Error $e_{\varepsilon}$} }
	\end{picture}
	\vspace{0.25em}
	}
    \hspace{2em}
    \subcaptionbox{Swarm plots for scalings of \(N=20\).}[7.9cm]{
    \begin{picture}(8,6.6)   
	\put(0,0){
	    \includegraphics[scale=0.98]{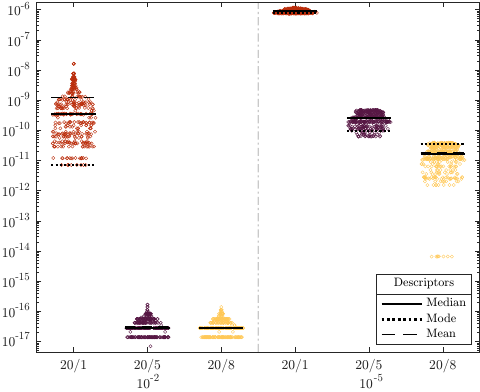}
	}
	\put(4.2,-0.3){\makebox(0,0)[c]{\fontsize{6.5}{7.5}\selectfont Instance configuration for $\alpha   \in \{1,5,8\}$ }}
	\put(-0.35,3.){\rotatebox{90}{ \fontsize{6.5}{7.5}\selectfont Error $e_{\varepsilon}$} }
	\end{picture}
	\vspace{0.25em}
	}
	%
	%
    \caption{
	Swarm plots of the relative errors \(e_{\varepsilon}\) for approximating the Newtonian potential under refinement for the maximal partition strategy. The horizontal labels are presented in the format \( N/\alpha\), corresponding to the grid resolution 
	$N \in \{10,20\}$ and the scaling factor $\alpha$. Additionally, two options for the neighbourhood radius are presented, specifically $\varepsilon \in \{ \num{e-2}, \num{e-5} \}$.
    }
\label{fig:K_Test_MaxQ_NP_alpha_10}
\vspace{-0.5\baselineskip}
\end{figure}


The results are presented in \cref{fig:K_Test_MaxQ_NP_alpha_10}. We observe that the refinement strategy improves the convergence of the method, and that, in essence, both choices of \(N\) generate the same error distribution. In particular, the errors are mostly clustered around the mean (i.e., they vary in a small range). Furthermore, we observe that spectral convergence is attained for two cases of \(\varepsilon = 10^{-2}\). The case \( \varepsilon = 10^{-5}\) suggests that further refinement can achieve the same level of accuracy. However, the attained error values are already below common thresholds used for solving differential systems numerically, and thus suggest that the approximation is sufficient for solving \cref{PDS:NL-CH}.

As a second numerical test, let us fix \(N=20\) and study the maximum error as \(\varepsilon\) decreases. 
\cref{fig:Multishape_Maximal_Q_MaxError} 
contains two panels that display this evolution for five different scaling factors \( \alpha \in \{1,2,3,4,8\}\). 
Panel \textbf{(a)} displays the maximum value of \(e_{\varepsilon}\).
If we write the scaling factor as \(\alpha = 2^k\) for some \(k \in \N\) and fix the value of \(\varepsilon\), then increasing \(k\) by one unit 
results in a reduction of \(e_{\varepsilon}\) of about one order of magnitude.
The dashed line represents the maximum value of \(\mathtt{G}_\varepsilon \), as computed in \cref{eq:CH_Conv_Neighbourhood_eps_min}. 
The results of an additional computation are presented in panel \textbf{(b)}. Here, we present the error of computing the convolution just inside the multishape; i.e., we report the maximum value of 
\(
	\big| \mathtt{J}(x) - \mathcal{I}_1[1](x) \big|  
\).
In this case, it is clear that the approximated integral inside the multishape  quickly approaches \(\mathtt{G}_\varepsilon \) for moderate values of \(\varepsilon\). For smaller values, we can increase the scaling factor and thus reduce the impact of the correction term. 
Taking into account the fact that the maximum of \(\mathtt{G}_\varepsilon \) decreases as \( O(\varepsilon^2 \log\varepsilon) \), we can thus select a value of \(\varepsilon\) and a scaling factor \(\alpha\) that significatively reduces the error term while also keeping the size of \( \scaleobj{0.75}{\blacksquare} \) as small as possible.

Let us finalise this section with some additional comments. As we noted at the beginning of this section, the cost of computing the convolution matrix \(\Convh\) increases as the scaling factor grows. For the case \( N= 20\) and \( \alpha = 8\), this yields a multishape with around \( 2 \times 10^{5}\) degrees of freedom for the case \( \scaleobj{0.75}{\blacksquare} \Subset \square \). Notwithstanding, our implementation is able to perform the kernel approximation for this point in a matter of seconds using a personal computer. 
Moreover, the resulting size of the convolution matrix is independent of any of the choices of discretisation; it is just the size of the multishape. As a result, in cases where the kernel is constant in time (as is the case for the Cahn--Hilliard system), the precomputation of \( \Convh \) is only done once and does not affect the computational cost of, e.g., solving the resulting PDE. For our specific setting, we will test the precomputed matrices associated with the Newtonian potential and the configuration parameters \( N\in \{10,20,40\} \) and \(\varepsilon \in \{10^{-2}, 10^{-5} \}  \). On the other hand, our results suggest that we could aim to build a more efficient partition of \( \dsquare\) that still showcases a significant error reduction. 
This is left as an avenue for future research.

%
%
\begin{figure}[h!]
\centering
\vspace{1.75cm}
\setlength{\unitlength}{1cm}  
    \subcaptionbox{Absolute error in \(\square\).}[7.9cm]{
    \begin{picture}(8,4.8)  
	\put(0,0){
	    \includegraphics[scale=0.98]{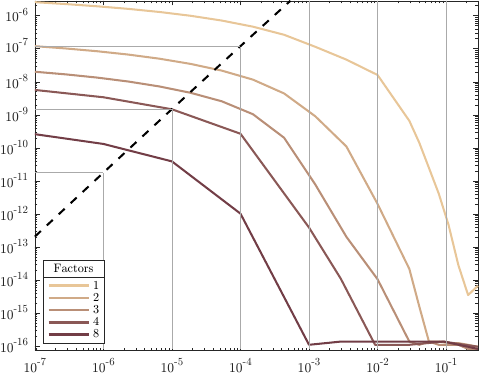}
	}
	\put(4.2,-0.3){\makebox(0,0)[c]{\fontsize{6.5}{7.5}\selectfont Neighbourhood radius $\varepsilon$ }}
	\put(-0.35,3.0){\rotatebox{90}{ \fontsize{6.5}{7.5}\selectfont $\max \, e_{\varepsilon}$} }
	\end{picture}
	\vspace{0.25em}
	}
    \hspace{2em}
    \subcaptionbox{Absolute error in \(\dsquare\).}[7.9cm]{
    \begin{picture}(8,4.8)  
	\put(0,0){
	    \includegraphics[scale=0.98]{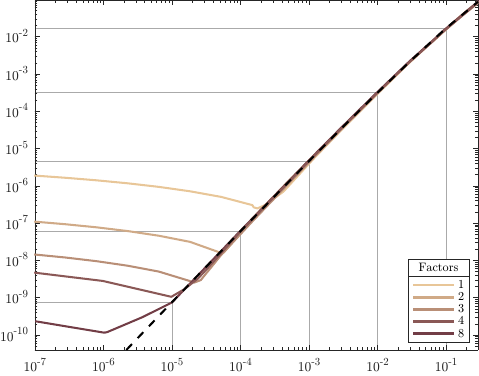}
	}
	\put(4.2,-0.3){\makebox(0,0)[c]{\fontsize{6.5}{7.5}\selectfont Neighbourhood radius $\varepsilon$ }}
	\put(-0.35,2.6){\rotatebox{90}{ \fontsize{6.5}{7.5}\selectfont $\max \, \big| \mathtt{J} - \mathcal{I}_1[1] \big|  $} }
	\end{picture}
	\vspace{0.25em}
	}
	%
	%
    \caption{
	Evolution (in log-log scale) of the absolute error (vertical axes) as \(\varepsilon\) varies in \( [10^{-7}, \sfrac{3}{10}]\) (horizontal axes). The dashed line is given by the maximum of \( |\mathtt{G}_\varepsilon|\); see \cref{eq:CH_Conv_Neighbourhood_eps_min}.
    }
\label{fig:Multishape_Maximal_Q_MaxError}
\vspace{-1\baselineskip}
\end{figure}


\section{Numerical results}\label{ch:NLCH_Numerical_Experiments}

In this section, we present a collection of numerical experiments designed to validate the performance of the pseudospectral multishape method introduced in \cref{ch:Spectral_Element_Newtonian_Potential} when applied to the nonlocal Cahn--Hilliard system \cref{PDS:NL-CH}. Rather than aiming at an exhaustive numerical analysis%
\footnote{%
Additional tests studying exact solutions and mesh refinement can be found in \cite[Chapter 20]{AMT2025a}.
},
our objective is to illustrate the predicted qualitative (and quantitative) behaviour of solutions under representative modelling scenarios.

Since the existence and uniqueness theory from \cref{known} does not depend on the sign of the interaction kernel, we consider the family of scaled kernels \( K_\eta \coloneqq \eta K \), where \( \eta \in \R\) is a constant that allows us to explore different dynamical regimes by tuning a single parameter. Accordingly, we focus on numerical solutions of the system
\begin{equation}
	\label{sys:NL-CH_Particular}
	\left\{
	\begin{aligned}
		\dod{\rho}{t} &= \Delta \mu						&&\text{in } Q,
		\\
		\mu &= F'(\rho) - K_\eta \star \rho		\qquad		&&\text{in } Q,
		\\
		\partial_n \mu &= 0							&&\text{on } \partial\Omega \times (0,T),
		\\
		\rho(0) &= \rho_0							&&\text{in } \Omega.
	\end{aligned}
	\right.
\end{equation}

Throughout all experiments, we assume a constant mobility \(m \equiv 1\) and employ the logarithmic potential \cref{eq:LogPotential} with scaling parameter \(\theta = 2\), for which
\( 
	F'(s) = \log \frac{1+s}{1-s}
\).
In addition, we compute the associated nonlocal free energy
\begin{equation}
	\label{eq:Nonlocal_Energy_NP}
	\mathcal{E}_\eta(t)
	\coloneqq \mathcal{F}[\rho](t)
	= \into F(\rho) \dif x - \frac{1}{2} \into \del{K_\eta \star \rho} \rho(x) \dif x
	\qquad (\forall t > 0),
\end{equation}
which is expected to be non-increasing along trajectories of the system.

We also monitor several additional quantities of interest during the simulations. The existence and qualitative behaviour of these quantities follow from the analytical results established in \cref{ch:CH_WP_and_Setup} under assumptions on \(F\) and its derivatives that are satisfied by the logarithmic potential and some of its variants. In particular, we are interested in parameter regimes (i.e., initial conditions and values of \(\eta\)) for which the \embh{separation property} holds. As established in \cref{sep1}, solutions to the nonlocal Cahn--Hilliard system become instantaneously and uniformly separated from the pure phases \(\pm1\), in the sense that there exists \(\delta>0\) such that for large-enough \(t\) the \(L^\infty\) norm of a solution \( \rho\) is bounded above by \( 1-\delta\). 
Qualitatively, this implies that an initially mixed phase \(\rho_0\) may segregate into distinct, nearly homogeneous regions in finite time, dissipate via diffusion, or evolve towards a stationary mixed state.
Furthermore, the long-time behaviour of the system is characterised by convergence to a stationary state. In particular, \cref{equi} guarantees the existence of a stationary solution \(\rho_\infty\) and a constant \(\mu_\infty\) such that \( \mu_\infty = F'(\rho_\infty)  - K \star \rho_\infty \) and the average estimate \( \overline{\rho}_\infty = \overline{\rho}_0\) holds. Finally, \cref{Conv_Rate} asserts that solutions converge to \(\rho_\infty\) in the \(L^2(\Omega)\) norm at a polynomial rate in time.

In what follows, we present three sets of experiments. In \cref{sec:NLCHnum_Newtonian_Potential}, we solve the nonlocal Cahn--Hilliard system with logarithmic potential, for different scalings of the Newtonian potential that showcase interesting dynamics for two different initial conditions. Then, in \cref{sec:NLCHnum_N_Regular_Potential}, we present a regularised version of the logarithmic potential, which can be used to approximate the solution of the original system for large times. Finally, \cref{sec:NLCHnum_Mixed_Kernel} explores a linear combination of the Newtonian potential with a mollifier (an approximation of unity) to obtain a discretised convolution kernel that displays a wide range of, entry-wise, positive and negative values. The resulting dynamics induced by this kernel are showcased for several values of \(\eta\). Additional tests on the unit disc and the unit cube are presented in \cref{App:Disc,App:Cube}.

All the experiments were performed on a MacBook Pro 2020 M1 with 16 GB RAM. 
Unless otherwise stated, we employed the variable-step, variable-order (1 to 5) stiff solver \texttt{ode15s} from \textsc{Matlab} 2025b. The method is based on backward differentiation and a modified Rosenbrock method \cite{Shampine1997} and includes an optional differential-algebraic equation (DAE) solver, which we use to enforce boundary conditions \cite{Shampine1999}. We set the method's absolute and relative tolerances to \num{e-7}.
Additionally, we used the configuration \( (N,\alpha,\varepsilon) = (40, 4, 10^{-5}) \) for approximating the Newtonian potential. 

The code, the associated kernels, energy and convergence plots, and additional experiments throughout scales can be found in  the open source repository:
\begin{center}
	\href{https://github.com/DDFT-Modelling/NLCH}{\texttt{https://github.com/DDFT-Modelling/NLCH}}
\end{center}


\subsection{Scalings of the Newtonian potential}\label{sec:NLCHnum_Newtonian_Potential}

In this subsection, we investigate how the scaling of the Newtonian interaction kernel influences the qualitative behaviour of solutions to the nonlocal Cahn--Hilliard system. In particular, we illustrate how varying the parameter \(\eta\) modulates the competition between diffusion and nonlocal attraction, leading to markedly different dynamical regimes, including pure diffusion, phase separation, and convergence to nontrivial stationary states.

To this end, we consider numerical simulations based on the scaled Newtonian kernel
\[
	K_\eta : \R^2 \setminus\{0\} \ni x \longmapsto \frac{\eta}{2\pi}\log\|x\| \in \R,
\]
where \(\eta \in \R\) is treated as a tunable parameter.

\subsubsection{Periodic wave}

\begin{figure}[ht]
\begin{center}
    \subcaptionbox{
    	Diffusion for \( \eta = 1\).		\label{sfig:A_Solution_PS_a}
	\vspace{0.5em}
    }
    {
    \includegraphics[scale=0.7]{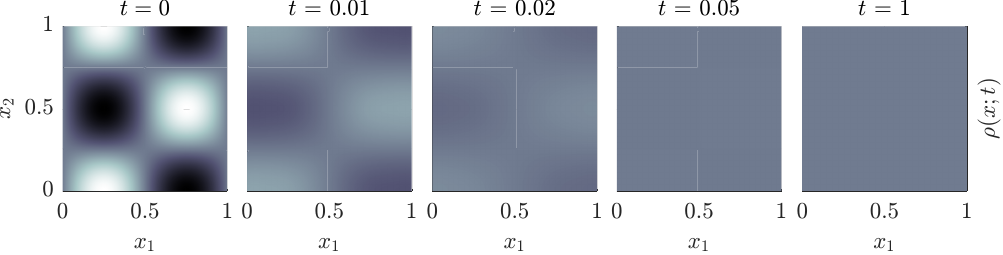}
    }
    \\[0.5\baselineskip]
    \subcaptionbox{
    	Slow phase separation for \( \eta = -50\).		\label{sfig:A_Solution_PS_b}
	\vspace{0.5em}
    }
    {
    \includegraphics[scale=0.7]{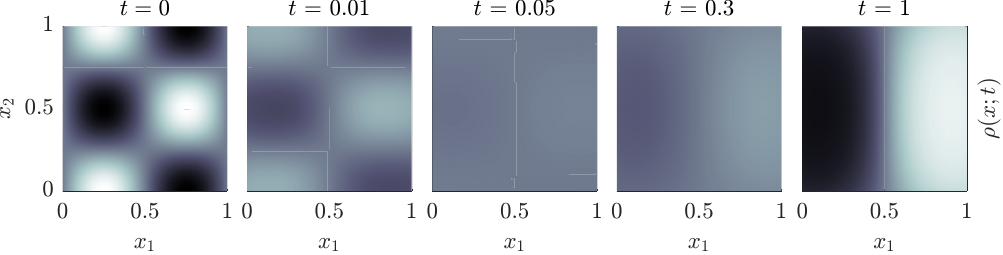}
    }
    \\[0.5\baselineskip]
    \subcaptionbox{
    	Fast phase separation for \( \eta = -150\).		\label{sfig:A_Solution_PS_d}
	\vspace{0.5em}
    }
    {
    \includegraphics[scale=0.7]{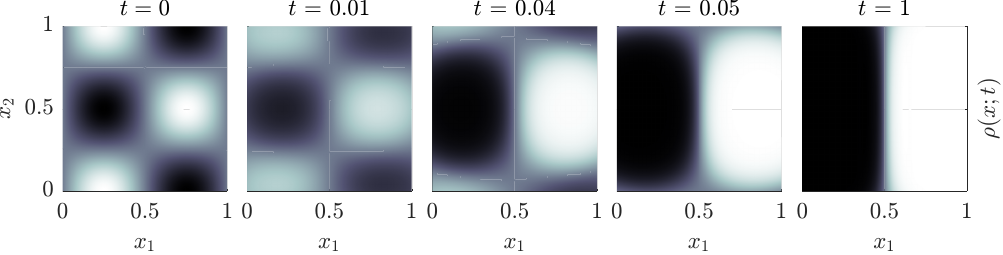}
    }
\caption{Evolution of \(\rho(x;t)\) from the initial condition \cref{eq:NLCH_Initial_Cond_SinCos} and different values of \( \eta\).}
\label{fig:A_Solution_PS}
\end{center}
\vspace{-0.5em}
\end{figure}

We obtained solutions of \cref{sys:NL-CH_Particular} up to \(T = 1\) for trajectories induced by the initial condition given by the periodic wave
\begin{equation}
	\label{eq:NLCH_Initial_Cond_SinCos}
	\rho_0(x) =  \sin(2\pi x_1) \cos(2\pi x_2),
	\qquad
	\overline{\rho}_0 = 0
	.
\end{equation}
We tested \( \eta\) taking values inside the interval \( [-150, 150] \). 
Whenever the solver struggled with the initial condition, we instead employed a fixed-point method to solve the boundary data \cite[\S 20.3.2]{AMT_Th_2024}.

In all cases, we observed that the mass of the system increased slightly to a value bounded by \( 10^{-10}\), which is of the same order of magnitude as the error from the convolution approximation, recall \cref{eq:CH_Conv_Neighbourhood_eps_min}, and smaller than the error tolerance of the DAE solver.
Moreover, we observed that for values of \(\eta\) greater than \(-30\), the convolution term was dominated by \(F'\) and \( \rho_0 \) was diffused to the stationary state \( \rho_\infty \equiv 0\). 
Here, we highlight that the case \( \eta = 0\) resulted in a stiff system, yet the same equilibrium was attained.
For values of \(\eta\) lower than \(-40\), the convolution was able to compete against the values of \(F'\) and other stationary points were attained. 
As \(\eta\) became increasingly negative, the system started to approximate the step function \( 2 \del{ \mathsf{1}_{ \R_{\geq 0} } (x_1 - \sfrac{1}{2} ) } - 1 \).
Note that this function does not depend on \( x_2\). 
Overall, we found that the system evolves faster towards a stationary state if we increase the absolute value of \(\eta\).

In \cref{fig:A_Solution_PS} the evolution of \(\rho\) is portrayed for the scaling factors \( \eta \in \{1,-50,-150\}\) (one value per panel). 
Here, values close to \(-1\) correspond to near-black shades, while values close to 1 appear as the brightest areas. Intermediate grayscale tones represent values around zero. Each panel displays a different set of time points. This is done to ensure that significant phenomena are portrayed.
Panel \textbf{(a)} showcases an example of diffusion, where the initial condition quickly vanishes towards the stationary state \( \rho_\infty \equiv 0\). 
In contrast, panels \textbf{(b)} and \textbf{(c)} display the phase separation property such that, already by the moment \(t\) takes the value of \(1\), an equilibrium that resembles the step function \(p\) is reached. 
More precisely, in panel \textbf{(b)}, we observe that the system first diffuses the initial state and by time \( t = 0.05\) reaches an almost-constant state. Then, the solution undergoes a separation phase, which is seen in the intensity values corresponding to \( t =0.3\) and \( t = 1\).
Panel \textbf{(c)} displays a similar behaviour that attains a more distinguishable separation at a faster rate.


\begin{figure}[htbp]
\begin{center}
	\subcaptionbox{Evolution of the nonlocal free energy \cref{eq:Nonlocal_Energy_NP}. \label{fig:A_NLCH_Energy_PS} }
	{\includegraphics[scale=0.45]{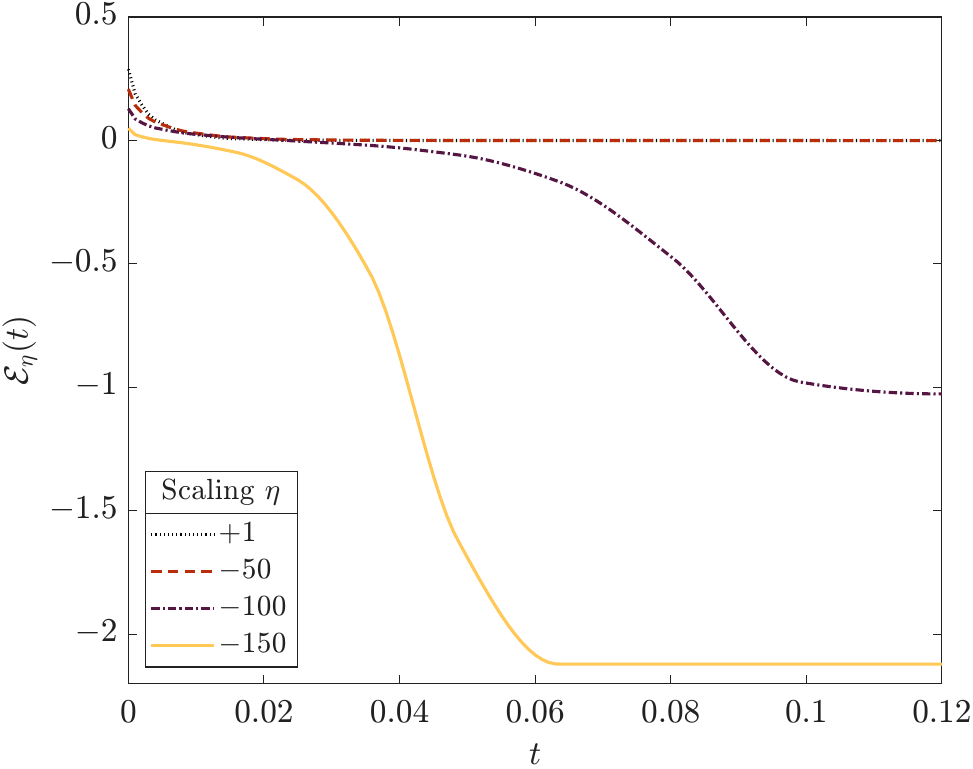}}
	\quad
	\subcaptionbox{Convergence towards equilibrium in the $L^2$ norm. \label{fig:A_NLCH_Convergence_PS} }
	{\includegraphics[scale=0.45]{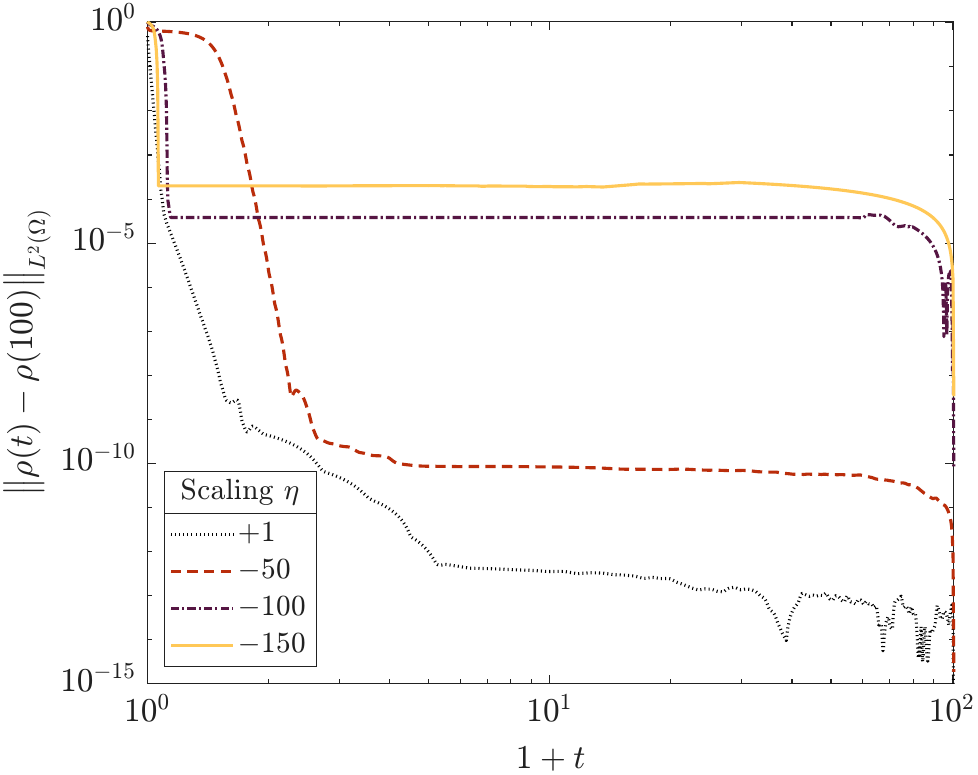}}
\caption{Free energy dissipation and convergence towards equilibrium for different values of the scaling parameter \(\eta\).}
\label{fig:A_NLCH_EnConv_PS}
\end{center}
\vspace{-0.5\baselineskip}
\end{figure}

The evolution of the nonlocal free energy \cref{eq:Nonlocal_Energy_NP} and the convergence towards equilibrium for this set of examples are reported in \cref{fig:A_NLCH_EnConv_PS}. 
In panel \cref{fig:A_NLCH_Energy_PS}, we display the free energy evolution over the time interval \(t \in [0,0.12]\) only, as all simulations rapidly reach an equilibrium that remains essentially unchanged thereafter. This restricted time window is therefore chosen solely for graphical clarity. As expected, the free energy is non-increasing in time. Moreover, we observe that the equilibrium free  energy decreases as the scaling parameter $\eta$ decreases. In particular, values of $\eta$ that induce phase separation lead to negative free energies, whereas the diffusive regime converging to the homogeneous state is characterised by a free energy bounded below by zero. Note that the free energy is only unique up to a constant in the sense that we can shift it without any repercussion in the dynamics.

The corresponding convergence behaviour is shown in panel \cref{fig:A_NLCH_Convergence_PS}. Here, we repeated our tests for \(T =100\), for which \( \rho_\infty \approx \rho(100)\). The error with respect to the final state is reported in the \(L^2\) norm. All cases exhibit an initial transient phase, after which the relaxation towards equilibrium slows down and enters a gradual decay regime. This behaviour is consistent with the polynomial convergence predicted by \cref{Conv_Rate}. We note that for large negative values of the scaling parameter, the convergence curves exhibit extended intervals during which the error remains nearly constant. This behaviour does not indicate a loss of convergence, but rather reflects the fact that, after a rapid initial separation, the solution reaches a near-stationary configuration. In this regime, the subsequent evolution is governed by very small corrections (the specialisation phase), for which any additional dynamics fall below the numerical resolution.

\begin{table}[h!]
\setlength{\tabcolsep}{0.4em}
\centering
\fontsize{8}{9}\selectfont
\def\arraystretch{1.5}
\begin{tabular}{c c rrrrrrrr}
\toprule
\( \eta \) && \multicolumn{1}{c}{150} & \multicolumn{1}{c}{50} & \multicolumn{1}{c}{1} & \multicolumn{1}{c}{0} & \multicolumn{1}{c}{$-1$} & \multicolumn{1}{c}{$-50$} & \multicolumn{1}{c}{$-100$} & \multicolumn{1}{c}{$-150$}
\\
\midrule
	\( \delta \)  
	&&
	\(1- \num{2e-9}\) & \(1 - \num{2e-10}\) & \(1- \num{3e-11}\)	& 
	\( 1 - \num{2e-9}\) & \( 1-\num{2e-12}\)  
	& 0.13\hphantom{00.}  & \(\num{2e-3}\)  & \(\num{6e-5}\)
	\\
	\( \mu_\infty \)  && 
	\num{9e-9} & \num{8e-10} &\(\num{6e-11}\) &  
	\(-\num{4e-9}\) &  \num{4e-12} 
	&  \num{3e-11} &  \num{e-10} &  \num{3e-10}
	\\
\bottomrule
\end{tabular}
\\[0.5em]
\caption{Approximation of the values \(\delta\) and \(\mu_\infty\) for different choices of \(\eta\).}
\vspace{-0.5\baselineskip}
\label{tb:A_CH_PS_param}
\end{table}


Finally, we computed the approximate values of \(\delta\) and \(\mu_\infty\), displayed in \cref{tb:A_CH_PS_param} for eight values of \(\eta\) (guaranteed by \cref{equi,sep1}). Once more, we used the extended timeline \(T=100\). Nevertheless, we highlight that by \(t\approx 1\) the solution has already entered a numerically stationary regime for all tested values of \(\eta\), as indicated by the free energy profile \cref{fig:A_NLCH_Energy_PS}. We notice that the values of \(\delta\) that correspond to solutions behaving as the stationary point \( \rho_\infty \equiv 0\) display a value of \(\delta\) close to the error of the convolution term. Then, as the phase separation kicks in, the values of \(\delta\) decrease as \(\rho\) starts to approach the separated state \(\rho_\infty\). To approximate the value of \(\mu_\infty\) we took the total average of \( \bfv =  F'(\rhob(T)) - \mathbb{K} \star \rhob(T)\), where \(\rhob\) is the numerical approximation of \(\rho\) as a function of time. We also noticed that \( \| \bfv - \overline{\bfv} \|_\infty \leq 10^{-13} \) and in most cases reached machine precision; i.e., in most cases \( \bfv \) behaves like a constant.

\subsubsection{A function of compact support}

Let us define the function
\(
	q(x) \coloneqq \del{ \sfrac{1}{2} \sin(3 \pi x_1) \cos(3 \pi x_2)  + \sfrac{1}{4}  } \, \,  \mathsf{1}_{ B_\infty (\sfrac{1}{2} \Ones[2] ; \sfrac{9}{25} ) } (x) .
\)
This function has compact support in \(\Omega\), yet it is not differentiable. We will regularise \(q\) using the mollifier \cite[Chapter 4]{Brezis_2011}:
\begin{equation}
	\label{eq:Mollifier}
	H(x) \coloneqq \exp\del{ - \del{1-\|x\|_2^2}^{-1} }
	\, \,  \mathsf{1}_{ B_2 (0;1) } (x) .
\end{equation}
We then select the initial condition
\begin{equation}
	\label{eq:NLCH_Initial_Cond_Conv}
	\rho_0(x) = 3 H_a \star q(x),
\end{equation}
where \(H_a (x) \coloneqq a^{-2} H( \sfrac{x}{a} ) \) for some \(a > 0\). In particular, let us select \( a = 10^{-1}\) which guarantees that \( \rho_0 \big|_{\partial \Omega} = 0\) and \( |\rho_0| < 1\).
Computationally, we also observe that \( \overline{\rho}_0 \approx 0.18 \).

\begin{figure}[ht]
\begin{center}
    \subcaptionbox{
    	Diffusion for \( \eta = 500\).		\label{sfig:C_Solution_PS_a}
	\vspace{0.5em}
    }
    {
    \includegraphics[scale=0.7]{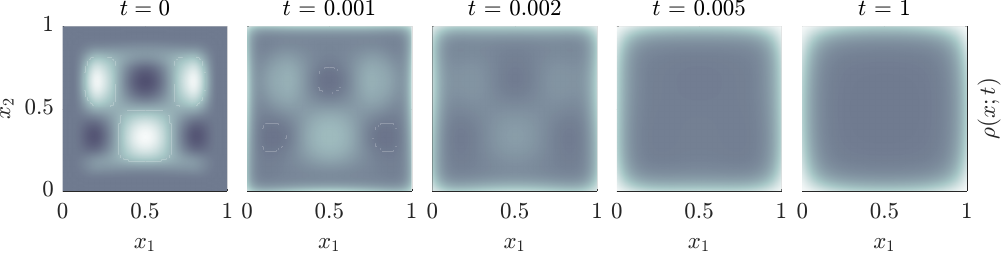}
    }
    \\[0.5\baselineskip]
    \subcaptionbox{
    	Slow phase separation for \( \eta = -100\).		\label{sfig:C_Solution_PS_b}
	\vspace{0.5em}
    }
    {
    \includegraphics[scale=0.7]{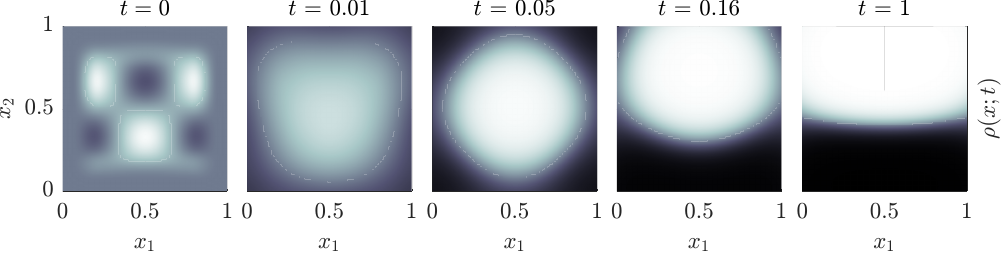}
    }
\caption{Evolution of \(\rho(x;t)\) from the initial condition \cref{eq:NLCH_Initial_Cond_Conv} and two choices of \( \eta\).}
\label{fig:C_Solution_PS}
\end{center}
\vspace{-0.5\baselineskip}
\end{figure}

Running the system for \( T = 1\) and \( \eta \in [-100,500] \), we obtained a similar behaviour to what we observed in the previous two examples: Energies increase as \(\eta\) increases, there is a plateau for \(\delta\), mass is conserved, and stationary states are reached in finite time. 
\cref{fig:C_Solution_PS} showcases two particular cases that reach states with the separation property. Panel \textbf{(a)} displays a state that concentrates mass around the boundary of \(\Omega\). In contrast, in panel \textbf{(b)} we see that the state undergoes two transitions: mass accumulates inside the unit ball, and then the system approximates the step function \( 2 \del{\mathsf{1}_{ \R_{\geq 0} } (x_2 - \sfrac{9}{20} )} - 1 \), which is independent of \( x_1\).

\subsection{A regular potential}\label{sec:NLCHnum_N_Regular_Potential}

We introduce a regularised potential \(F_\omega\in C^3(\mathbb{R})\) which, for any \(\omega\in(0,1)\), extends the logarithmic potential \(F\) outside the interval \([-1+\omega,\,1-\omega]\) by matching its third-order Taylor expansions at the cutoff points:
\begin{equation*}
	F_{\omega} (s) \coloneqq 
	\begin{cases}
		\sum\limits_{k=0}^3 \frac{1}{k!} F^{(k)} (-1+\omega)  (s+1-\omega)^k	
			& 	\text{if } s \leq -1 + \omega,
		\\
		F(s) 	& 	\text{if } s \in (-1+\omega, 1-\omega),
		\\
		\sum\limits_{k=0}^3 \frac{1}{k!} F^{(k)} (1-\omega)  (s-1+\omega)^k		
			& 	\text{if } s \geq 1 - \omega.
	\end{cases}
\end{equation*}
Recalling that \( F^{(3)} (s) = \sfrac{ 4s }{(s^2-1)^2} \), we obtain that
\begin{align*}
	F_{\omega} '(s) =
	\begin{cases}
		\log \frac{\omega}{2-\omega} + \frac{2}{\omega(2-\omega)} (s+1-\omega) - \frac{2(1-\omega)}{\omega^2 (2-\omega)^2}  (s+1-\omega)^2
			& 	\text{if } s \leq -1 + \omega,
		\\
		\log \frac{1+s}{1-s} 	& 	\text{if } s \in (-1+\omega, 1-\omega),
		\\
		\log \frac{2-\omega}{\omega} + \frac{2}{\omega(2-\omega)} (s-1+\omega) + \frac{2(1-\omega)}{\omega^2 (2-\omega)^2} (s-1+\omega)^2
			& 	\text{if } s \geq 1 - \omega.
	\end{cases}
\end{align*}
This regularisation of \(F\) was introduced and used in \cite{Gal2017} to show how the original strictly separated solution can be viewed as a solution to the same equation with a smooth potential. In particular, it is shown that if \(\rho\) solves the original system and \(\rho_\omega\) solves the corresponding regularised system with initial condition \(\rho_\omega(x,0) = \rho(x,3\sigma)\), then \(\rho_\omega(x,t) = \rho(x,t+3\sigma)\) for any \(\sigma>0\). The constant factor multiplying \(\sigma\) allows one to select \(\omega\) as small as the separation parameter \(\delta\); thus, it does not affect the results of this section, yet we will preserve it for consistency with the theory.

We now investigate this property numerically. We consider a solution \(\rho\) of \cref{sys:NL-CH_Particular} with initial condition \cref{eq:NLCH_Initial_Cond_Conv} and scaling parameter \(\eta=500\). The values of \(\sigma\) and the integration time \(T\) are chosen so that the simulation does not start from a stationary state; see \cref{fig:D_Solution_PS}. Specifically, we set \( \sigma = (\sfrac{2}{9}) \times 10^{-3} \)  and \(T = \sfrac{1}{375} \), which yield \( T + 3\sigma = (\sfrac{1}{3}) \times 10^{-2} \), and fix \( \omega = 10^{-3} \).

\begin{figure}[ht]
\begin{center}
    {
    \includegraphics[scale=0.7]{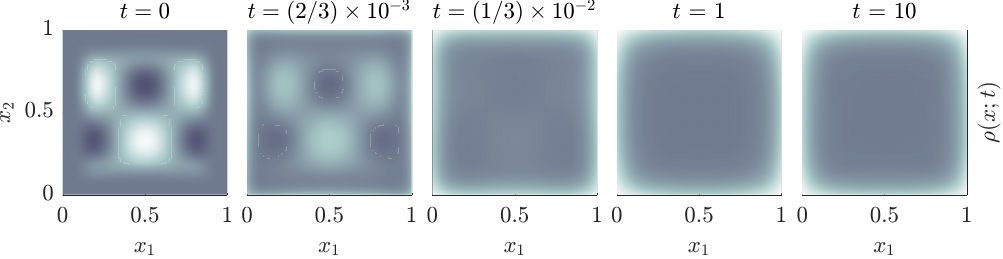}
    \vspace{-0.25\baselineskip}
    }
\caption{Evolution of \(\rho(x;t)\) from the initial condition \cref{eq:NLCH_Initial_Cond_Conv} at times \( 3\sigma\), \( T+3\sigma\), \(1\), and \(10\).}
\label{fig:D_Solution_PS}
\end{center}
\vspace{-0.5\baselineskip}
\end{figure}

The numerical values of \(\rho(x,3\sigma)\) and \(\rho(x,T+3\sigma)\) were computed using the DAE solver in \textsc{Matlab} with absolute and relative tolerances set to \num{e-11}. We then set \(\rho(x,3\sigma)\) as initial condition for the regularised problem and computed \(\rho_\omega(x,T)\). The numerical \(L^2(\Omega)\) error,
\(
	\big\| \,  \rhob_\omega (T) - \rhob( T + 3\sigma)  \big\|_{L^2(\Omega)} 
\)
was approximately \num{2e-11}.
As expected, the agreement improves once the solution approaches a stationary state. Repeating the experiment with integration up to time \(T=1-3\sigma\) yields an approximate error of \num{8e-15}.
We have thus obtained a good approximation that matches (to machine precision) the stationary state of \(\rho\) itself.
Motivated by this observation, we next assess whether the regularised problem can be used to accelerate long-time simulations. Reducing the solver tolerances to \num{e-7} and integrating up to time \(T=10-3\sigma\), we obtain an approximation error of around \num{9e-14},
indicating that the stationary state is still recovered to high accuracy at significantly reduced computational cost.

The reduction in computational cost is expected from the improved regularity of the potential. The blow up of \(F'\) near pure states \( (|\rho| \sim 1\)) adds stiffness into the semi-discrete system, which restricts admissible time steps and dampens the performance of the fixed point iteration used within the DAE solver. The regularised potential \(F_\omega\), being of class \(C^3\), removes this source of stiffness. As a result, larger adaptive time steps are accepted and fewer nonlinear iterations are required to reach the stationary state.

\begin{table}[h!]
\setlength{\tabcolsep}{0.4em}
\centering
\fontsize{8}{9}\selectfont
\def\arraystretch{1.5}
\begin{tabular}{r c c c c c c c}
\toprule
\multirow{2}{*}{ \bf Scaling $\eta$ } 
&&  \multicolumn{5}{c}{\bf Error \( \big\| \,  \rhob_\omega (10-3\sigma) - \rhob( 10)  \big\|_{L^2(\Omega)} \) }
\\  \cline{3-8}
\\[-1.1em]
&& \(\rho_0 = \) \cref{eq:NLCH_Initial_Cond_SinCos}  && \(\rho_0 \equiv -\sfrac{1}{2} \)  && \( \rho_0 = \) \cref{eq:NLCH_Initial_Cond_Conv}
%
%
%
\\
\midrule
	\hphantom{$-$}100	&& 	\num{2.39e-10}	&&	\num{2.12e-14}	&&	\num{3.79e-14}
	\\
	\hphantom{$-$0}50	&& 	\num{1.14e-10}	&&	\num{4.88e-13}	&&	\num{2.18e-13}
	\\
	\hphantom{$-$00}1	&& 	\num{3.28e-10}	&&	\num{5.62e-15}	&&	\num{7.65e-13}
	\\
	\hphantom{00}$-1$	&& 	\num{1.80e-10}	&&	\num{4.32e-15}	&&	\num{5.03e-13}
	\\
	\hphantom{0}$-50$	&& 	\num{2.18e-10}	&&	\num{4.92e-11}	&&	\num{1.01e-10}
	\\
	$-100$	&& 	\num{5.76e-9} \hphantom{.}	&&	\num{5.39e-12}	&&	\num{2.29e-12}
	\\
\bottomrule
\end{tabular}
\\[0.5em]
\caption{Approximation error for different values of \(\eta\) and initial conditions.}
\vspace{-0.5\baselineskip}
\label{tb:D_CH_PS_Errs_Long}
\end{table}

Finally, we repeated this test for several parameter configurations from the previous numerical experiments. We considered the initial conditions used earlier, together with the additional constant initial condition \(\rho_0\equiv \sfrac{-1}{2}\), and evaluated the quality of the approximation \(\rho_\omega\) for different values of the scaling parameter \(\eta\). For these experiments, we set  \( \sigma = (\sfrac{1}{3}) \times 10^{-3} \) and integrated up to time \(T = 10 - 3\sigma\) using a fixed solver tolerance of \num{e-7}.
The resulting approximation errors are reported in \cref{tb:D_CH_PS_Errs_Long}. In all cases, the error remains small (below \num{6e-9}, which already lies under the tolerance of the solver), and in several configurations, notably for the constant initial condition, it reaches machine precision. These results indicate that the regularised potential provides an effective and reliable tool for approximating stationary states of \cref{sys:NL-CH_Particular} while significantly reducing the computational effort required for long-time integration.

\subsection{A kernel mixture}\label{sec:NLCHnum_Mixed_Kernel}

As a final experiment, let us consider a nonlocal kernel that displays both positive and negative values at similar scales. 
Numerically, the discretisation \(\mathbb{K} \) of the Newtonian kernel defined over \( \Omega \) mostly displays negative entries lying inside the interval \( [-1.2 \times 10^{-3},  2.5 \times 10^{-5}] \).
The discretisation of the convolution for the mollifier \cref{eq:Mollifier}, denoted  \( \mathbb{H}_a\), instead displays non-negative entries bounded above by 
\( 6 \times 10^{-2} \) for \( a = 10^{-1}\). 
Let us now define the kernel
\[
	\mathbb{L}_\eta = \eta \del{ \mathbb{K} +  (\sfrac{1}{40}) \, \mathbb{H}_a }.
\]
The resulting kernel displays entries with values contained inside the interval \( [-6 \times 10^{-4} , 4 \times 10^{-4}] \). Thus, we have balanced the maximum and minimum values of the kernel entries.

\begin{figure}[ht!]
\centering
\subcaptionbox{
    	Formation of sharp structures for \( \eta = 1\,000\).		\label{sfig:E_Solution_PS_e}
	\vspace{0.5em}
    }
    {
    \includegraphics[scale=1.4]{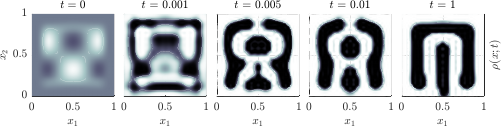}
    }
\\[0.5\baselineskip]
    \subcaptionbox{
    	Formation of sharp structures for \( \eta = 500\).		\label{sfig:E_Solution_PS_a}
	\vspace{0.5em}
    }
    {
    \includegraphics[scale=1.4]{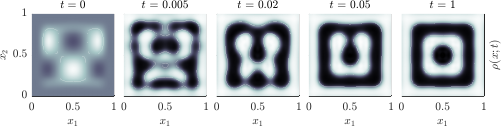}
    }
\\[0.5\baselineskip]
    \subcaptionbox{
	Formation of structures for  \( \eta = 300\).		\label{sfig:E_Solution_PS_d}
	\vspace{0.5em}
    }
    {
    \includegraphics[scale=1.4]{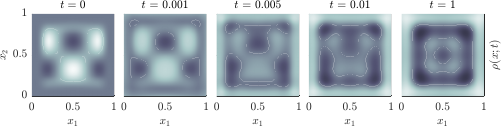}
    }
\caption{Evolution of \(\rho(x;t)\) from the initial condition \cref{eq:NLCH_Initial_Cond_Conv} and \( \eta \in  \{-5, -1, 2, 3, 5, 10\} \times 10^2 \).}
\end{figure}
\begin{figure}[ht!]\ContinuedFloat
\centering
    \subcaptionbox{
    	Concentration of values at corners for \( \eta = 200\).		\label{sfig:E_Solution_PS_c}
	\vspace{0.5em}
    }
    {
    \includegraphics[scale=1.4]{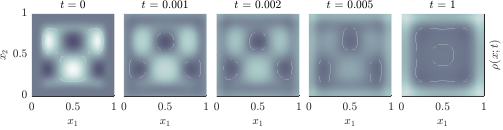}
    }
    \\[0.5\baselineskip]
    \subcaptionbox{
	Slow phase separation for \( \eta = -100\).		\label{sfig:E_Solution_PS_b}
	\vspace{0.5em}
    }
    {
    \includegraphics[scale=1.4]{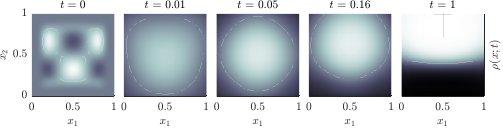}
    }
    \\[0.5\baselineskip]
    \subcaptionbox{
    	Fast phase separation for \( \eta = -500\).		\label{sfig:E_Solution_PS_f}
	\vspace{0.25em}
    }
    {
    \includegraphics[scale=1.4]{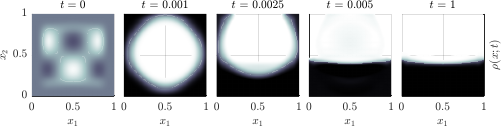}
    }
\caption{(Continued) Evolution of \(\rho(x;t)\) from the initial condition \cref{eq:NLCH_Initial_Cond_Conv} and \( \eta \in  \{-5, -1, 2, 3, 5, 10\} \times 10^2 \).}
\label{fig:E_Solution_PS}
\vspace{-0.5\baselineskip}
\end{figure}

We tested the effect of \( \mathbb{L}_\eta\) for several values of \(\eta\) inside the interval \( [-500,1\,000]\), the initial condition of compact support \cref{eq:NLCH_Initial_Cond_Conv}, and maximum time \( T = 1\).
The resulting dynamics displayed many familiar shapes that we observed in previous sections, but additional structures were formed for some values of \(\eta\). 
We collected a relevant set of examples in \cref{fig:E_Solution_PS}. 

In general, the dynamics for \( \eta < 0 \) behave similarly to what we observed for panel \embh{(b)} in \cref{fig:C_Solution_PS}. 
The only difference is that for \( \eta = -500\) the solution undergoes two separation phases, as in this case the solution quickly approaches the function \( p_1(x) = 2 \del{\mathsf{1}_{B_2( \sfrac{1}{2} \Ones[2], \sfrac{1}{2} )} (x) } - 1 \) to then refine to the step function \( p_2(x) = 2 \del{\mathsf{1}_{ \R_{\geq 0} } (x_2 - \sfrac{2}{5} )} - 1 \).

The dynamics for \( \eta > 0\) exhibit significant changes to what we observed in panel \embh{(a)} of \cref{fig:C_Solution_PS}. In that setting, we observed that the Newtonian potential did not promote the formation of additional structures besides the concentration of density at the corners. In this new setting, we observe instead that \( \mathbb{L}_\eta\) promotes the formation of additional energy-minimising structures. The most significant cases are the ones displayed for \( \eta \in \{500, 1\,000 \}\) where the phase separation property forms distinguishable and pattern-rich structures.

\section{Conclusions}\label{ch:NLCH_Conclusions}

We have successfully found solutions to the nonlocal Cahn--Hilliard system with constant mobility, featuring logarithmic and Newtonian potentials. Our study began with a review of the system, presenting an extension of the existence and uniqueness theory to polygonal domains. The Newtonian potential displayed a weak singularity at the origin, limiting the applicability of direct collocation methods. To overcome this limitation, we have developed a spectral element method to approximate convolution using a multishape approach and a piecewise-constant approximation. Analytic bounds on the error and extensive numerical tests allowed us to obtain error-controlled approximations that can be efficiently employed to solve differential systems. We then presented an extensive gallery of examples exhibiting the effectiveness of the method and the formation of rich structures that display the phase-separation property. For the first time, we have found high-resolution numerical solutions for this system that can be achieved with limited computational resources. Our future work will be centred on solving coupled systems of partial differential equations that 
include the nonlocal Cahn--Hilliard system to study control problems, e.g., the Hawkins--Daruud model for tumour growth or the Cahn--Hilliard--Navier--Stokes system, to recover biological data or mediate chemical reactions.

\subsubsection*{Funding}

A.M-T. acknowledges support from the MAC–MIGS CDT Scholarship under the Engineering and Physical Sciences (EPSRC) UK grant EP/S023291/1, and the Postdoctoral Pathways for Growth in the UK scheme, administered by the University of Edinburgh.
J.W.P. acknowledges support from the EPSRC grants EP/S027785/1 and EP/Z533786/1. The research of AP was funded in part by the Austrian Science Fund \href{https://doi.org/10.55776/ESP552}{10.55776/ESP552}.

\subsubsection*{Acknowledgments}

A.M-T. acknowledges that part of this work was done partially as a visiting researcher during the programme Multiscale Analysis and Methods for Quantum and Kinetic Problems at the Institute for Mathematical Sciences, National University of Singapore, in 2023.
MG and AP are members of Gruppo Nazionale per l'Ana\-li\-si Matematica, la Probabilit\`{a} e le loro Applicazioni (GNAMPA), Istituto Nazionale di Alta Matematica (INdAM). MG's research is part of the activities of ``Dipartimento di Eccellenza 2023--2027'' of Politecnico di Milano. 

\newpage
\addcontentsline{toc}{part}{Bibliography}
\setlength{\bibsep}{0pt plus 0.3ex}
\footnotesize
\bibliographystyle{siamplain}
\bibliography{manuscript.bbl}


\newpage
\begin{appendices}
\renewcommand{\theequation}{A--\arabic{section}.\arabic{equation}}
\setcounter{equation}{0}

\section{Diffeomorphic transformations}\label{app:diffs}

\subsection*{Unit square to general rectangle}

Any open rectangle \(\Theta = (a_1,b_1) \times (a_2,b_2) \) can be converted to \(\Omega\) through the inverse of the following affine transformation:
\begin{wrapfigure}[7]{r}{0.3\textwidth} 
    \centering
    \vspace{-1.2em}
    \includegraphics[scale=0.5,page=1]{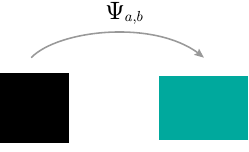}
    \caption{Square to rectangle map.}
    \label{sub:Map_Rectangle}
\end{wrapfigure}
\[
	\Psi: \overline{\Omega} \ni (x_1,x_2)  \longmapsto \del[1]{  [b_1 - a_1] x_1 + a_1, [b_2 - a_2] x_2 + a_2 } 
	\in \overline{\Theta},
\]
which is given by
%
%
\[
	\Psi^{-1}: \overline{\Theta}  \ni (y_1,y_2)  \longmapsto \del{ \frac{ y_1 - a_1 }{b_1 - a_1} , \frac{ y_2 - a_2 }{b_2 - a_2}  } \in \overline{\Omega}.
\]
For convolutions, this transformation yields 
\(
	\abs[1]{ \det\del[0]{J_\Psi } }(x) =  |b_1-a_1| |b_2 - a_2| 
\) for any \(x\in \overline{\Omega}\).


\subsection*{Unit square to bulged square}

In biological applications, individual cells often feature curved membranes and non-orthogonal structures that rectangular domains do not represent well. To illustrate how our framework extends to such geometries, we deform the unit square into a convex, ``bulged'' domain, which, although not used elsewhere in the paper, serves as a geometric template for future research on phase separation in irregular cellular domains.

A convex and open bulged square \(\Theta\) is a square with bulging (outer) sides and pointy corners. The bulge can be controlled by a parameter \(k\in \R\), where \( k \in [0,\sfrac{1}{2})\) returns a convex but cornered set, \(k \geq \sfrac{1}{2}\) gives a rounded set but \(\Theta\) is no longer one-to-one, \(k \in (-\sfrac{1}{2}, 0) \) gives a pillow square shape, and \(k \leq -\sfrac{1}{2}\) creates self-intersecting shapes. We are interested in the first case, with the map:
\begin{wrapfigure}[7]{r}{0.25\textwidth} 
    \centering
    \vspace{-1.2em}
    \includegraphics[scale=0.5,page=2]{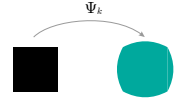}
    \caption{Bulged square map.} 
    \label{sub:Map_Bulge}
\end{wrapfigure}
\[
    \Psi: \overline{\Omega} \ni (x_1,x_2)  
    \longmapsto \del{  (2x_1-1) \left[ 1 + 4 k x_2 \del{1- x_2} \right], (2x_2-1) \left[ 1 + 4k x_1 \del{1- x_1} \right] } 
	\in \overline{\Theta},
\]
Here
\begin{align*}
	\det \del[0]{ J_\Psi } (x) 
	&= 
	\begin{vmatrix}
		2 + 8 k x_2 \del{1- x_2} & -4k (1-2x_1) (1-2x_2)
		\\
		-4k (1-2x_1) (1-2x_2) & 2 + 8 k x_1 \del{1- x_1}
	\end{vmatrix}
	\geq 4(1 - 4 k^2),
\end{align*}
%
and the latter is positive whenever \( k \in (- \sfrac 1 2,\sfrac 1 2) \)
Notice that from \cref{eq:Conv_Change_of_Vars}, there is no need to explicitly have \(\Psi^{-1}\) at hand.


\section{Minimal partition}\label{App:MinPart_MS}

\begin{figure}[h!]
\begin{center}
	\includegraphics[scale=0.6, page=1]{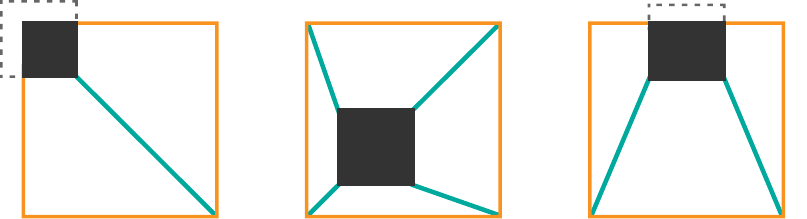}
	\caption{Minimal quadrilateral partitions based on 
	a neighbourhood around a collocation point.}
\label{fig:Multishape_Minimal}
\end{center}
\vspace{-0.5\baselineskip}
\end{figure}

We can consider a partition of \(\dsquare\) based on the minimal number of quadrilaterals required to determine a multishape. There are three scenarios to consider:  
\textbf{(a)}
\( \scaleobj{0.75}{\blacksquare} \) is located at a corner of \(\overline{\square}\). Then, we only require \embh{2} elements to partition \(\dsquare\). These elements are determined by tracing a line from the corner of \( \scaleobj{0.75}{\blacksquare} \) not in the border of \(\square\) towards the opposing corner of \(\square\).
\textbf{(b)}
\( \scaleobj{0.75}{\blacksquare} \) is completely embedded inside \(\square\) (i.e., \( \scaleobj{0.75}{\blacksquare} \Subset \square \)). Then we require \embh{4} elements determined by tracing a line from each corner of \( \scaleobj{0.75}{\blacksquare} \) to its correspondent in corner in \(\square\) (i.e., connect each pair of bottom-left, bottom-right, top-left, and top-right corners).
%
\textbf{(c)}
\( \scaleobj{0.75}{\blacksquare} \) shares one side, and one side only, with \(\overline{\square}\). Then we need \embh{3} elements, which can be determined similarly as in case \textbf{(b)}.

The procedure is depicted in \cref{fig:Multishape_Minimal}. There, a black-solid square represents \( \scaleobj{0.75}{\blacksquare} \) while the transversal segments inside \(\square\) (in teal) represent the additional lines required to determine a multishape in \(\dsquare\). 
Observe that if \( (\alpha N)^2\) is the number of collocation points used to discretise \( \scaleobj{0.75}{\blacksquare} \), then we will deal with at most \( 4 (\alpha N)^2 \) points at once for most iterations in the inner loop of \cref{alg:MultiShape}. This latter supposes an improvement in contrast to the \( 8 (\alpha N)^2 \) points needed for the maximal partition in \cref{ssec:Part_MS}. 

However, a validation step can showcase the limited accuracy of the approximation. To see this, we can run a similar numerical experiment as in \cref{ssec:Quality_test}. Under this partition, we computed the numerical error \cref{eq:MS_error_NP_conv_abs} for \( \alpha \) in the set \( \{4,5,8,10\} \), letting
 \( \varepsilon \in \{10^{-2}, 10^{-5}\}\) and \(N \in \{10,20\}\).

The results are presented in \cref{fig:K_Test_MinQ_NP_alpha_10}. As in \cref{ssec:Quality_test}, we observe that the refinement strategy improves the convergence of the method. In particular, we observe that the distribution of the error is concentrated around the median and mode values of each swarm. Furthermore, we observe that spectral convergence is attained for \(N=20\) and \(\varepsilon = 10^{-2}\). However, for \(\varepsilon = 10^{-5}\), the improvement, although noticeable, is not as drastic for any \(N\). This difference is not solely attributable to the size of \(N\). Observe that employing the same scaling factor for the two meshes implies that \(N=20\) utilises twice as many points for each experiment. This distinction is more pronounced in the case \(\varepsilon = 10^{-2}\) for the pairs of swarms \(10/8\) and \( 20/4\) or \( 10/10\) and \(20/5\), as these are essentially identical. We do not observe this behaviour for the other choice of  \(\varepsilon\). Furthermore, we observe that, in this case, for \(N=10\), the error does not go below \( 10^{-10}\), and for \( N = 20\) the bound decreases to \( 10^{-11}\). This raises the question of whether there might be some additional factor that prevents the error from falling below these observed thresholds.

\begin{figure}[ht!]
\centering
\vspace{1.75cm}
\setlength{\unitlength}{1cm}  
    \subcaptionbox{Swarm plots for scalings of \(N=10\).}[7.9cm]{
    \begin{picture}(8,5.2)  
	\put(0,0){
	    \includegraphics[scale=0.98]{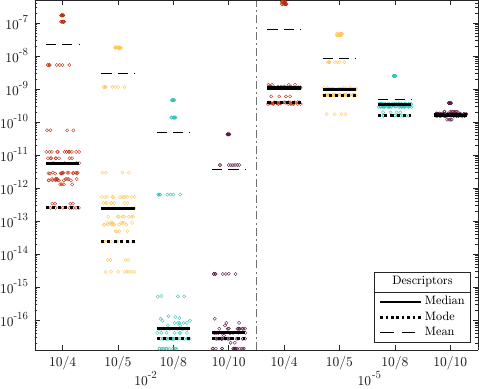}
	}
	\put(4.2,-0.3){\makebox(0,0)[c]{\fontsize{6.5}{7.5}\selectfont Instance configuration}}
	\put(-0.35,3){\rotatebox{90}{ \fontsize{6.5}{7.5}\selectfont Error $e_{\varepsilon}$} }
	\end{picture}
	\vspace{0.25em}
	}
	\hspace{2em}
    \subcaptionbox{Swarm plots for scalings of \(N=20\).}[7.9cm]{
    \begin{picture}(8,5.2)  
	\put(0,0){
	    \includegraphics[scale=0.98]{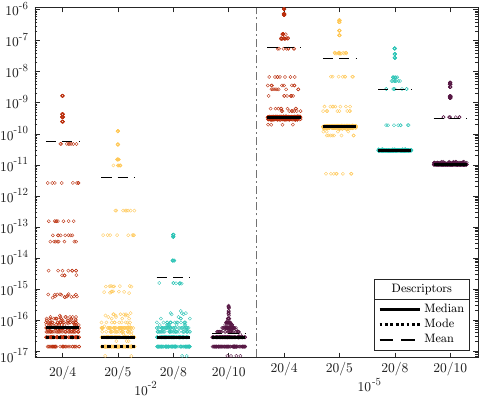}
	}
	\put(4.2,-0.3){\makebox(0,0)[c]{\fontsize{6.5}{7.5}\selectfont Instance configuration}}
	\put(-0.35,3.){\rotatebox{90}{ \fontsize{6.5}{7.5}\selectfont Error $e_{\varepsilon}$} }
	\end{picture}
	\vspace{0.25em}
	}
	%
	%
    \caption{
	Swarm plots of the relative errors \(e_{\varepsilon}\) for approximating the Newtonian potential under refinement. The horizontal labels are presented in the format \( N/\alpha\), corresponding to the grid resolution 
	$N \in \{10,20\}$ and the scaling factor $\alpha \in \{4,5,8,10\}$. Additionally, two options for the neighbourhood radius are presented, specifically $\varepsilon \in \{ \num{e-2}, \num{e-5} \}$.
    }
\label{fig:K_Test_MinQ_NP_alpha_10}
\end{figure}


The geometric construction in \cref{fig:Multishape_Minimal} presents an issue as \(\varepsilon\) becomes small: Some computational quadrilaterals start to degenerate into other shapes. Specifically, there are some quadrilaterals that behave like triangles and others behave like parallel lines to the boundary of the box. 
In principle, this is not a problem as long as the centre of the small box is not close to the boundary. However, if a collocation point is close to the boundary (e.g., minimum coordinate distance in \((2\varepsilon,3\varepsilon)\)), then the inner angles of at least one element in the partition can grow very close to \(\pi\) [rad]. This can generate a problem with the numerical structures used to compute quantities in the multishape. 
%
%
Indeed, a close examination of the pointwise errors reveals that the error displays higher values close to the boundary of the computational domain. 
%
%
We have seen that we can slightly overcome this issue by increasing the scaling factor \(\alpha\). However, even in such regimes, the error will still be concentrated close to the boundary where very sharp or very flat angles form inside the elements.

\section{Solution on the disc}\label{App:Disc}

We can extend our workings from \cref{ch:CH_WP_and_Setup} to the disc domain \(\Omega \coloneqq B_2(0;1) \), observing that the theory presented in \cref{ch:Spectral_Element_Newtonian_Potential} also holds. First, let us introduce the graphical notation
\( \Omega \eqqcolon \disc\), \( \Omega \setminus B_2 (x; \varepsilon)  \eqqcolon \ddisc \), and \( \Omega \cap B_2 (x; \varepsilon)  \eqqcolon \raisebox{.05em}{\smalldisc} \).

To apply our methods, we need two ingredients: a computational correction factor like \cref{eq:G_Diagonal_Approximation_Newtonian} and a partition strategy as in \cref{ssec:Part_MS}. The former aims to approximate 
\begin{equation}\label{eq:G_Disc}
	\mathtt{G}_\varepsilon(x) 
	\coloneqq
	\int\limits_{x - \smalldisc} K(u) \dif u.
\end{equation}
This is a particularly difficult quantity to obtain analytically for \( \operatorname{dist}(x,\partial\disc) < \varepsilon\), where a circular segment needs to be considered. Alternatively, we can approximate \( \mathtt{G}_\varepsilon \) by the relation
\begin{equation}
\label{eq:J_Disc}
	\mathtt{J}(x) \coloneqq 
	K \star 1_{\disc} (x) = \mathtt{G}_\varepsilon(x) + \int\limits_{\ddisc}  K(x-y) \dif y,
	\qquad \forall x \in \overline{\disc}.
\end{equation}
Thus, if we can numerically establish the value of the integral over \( \ddisc\), then we can approximate \( \mathtt{G}_\varepsilon \) from \cref{eq:J_Disc} and use this correction factor within the numerics. Notice that some additional numerical error could be introduced from the quadrature method. Notwithstanding, since integration in multishape is based on the Clenshaw--Curtis quadrature weights and \(K\) is smooth on the latter domain, we can assert that the approximation will have spectral accuracy \cite{Trefethen2008}. We tested this assumption in our numerical tests, suggesting a path for future work in cases where an exact \(\mathtt{G}_\varepsilon\) may not be available. 

It is possible to determine \cref{eq:J_Disc} analytically by realising that \(\mathtt{J}\) is radially symmetric since \(K\) is so as well. Then, we can reduce the convolution problem to \( x\in [-1,1] \times \{0\}\) and consider a change of variables to polar coordinates; this process involves the computation of log-cosine integrals \cite[\S 4.224]{Zwillinger2014a}. 
Alternatively, from potential theory, we have that \( J\) is the solution of \( \Delta J = 1\) in \(\disc\) with homogeneous Dirichlet boundary condition. Combining radial symmetry with a change of polar coordinates reveals that this is just an Euler--Cauchy equation yielding the solution
\begin{equation*}
	\mathtt{J}(x) = \frac{1}{4} \del{ \|x\|^2 -1 }.
\end{equation*}

An exact formula for \( \mathtt{G}_\varepsilon \) can be found by analysing the geometry of the intersection between the two discs that define {\raisebox{.05em}{\smalldisc}}. Thus, the convolution of the Newtonian potential over {\raisebox{.05em}{\smalldisc}} results in a highly nontrivial expression involving the dilogarithm \( \operatorname{Li}_2 :\C\to \C\). We refer to the following result from \cite{AMT2025a}, which also provides details on the efficient evaluation of \(\mathtt{G}_\varepsilon\) as \(\varepsilon\) becomes small:

\begin{lemma}[{\citealp[Theorem 1]{AMT2025a}}]\label{th:Conv_NonTrivial}
	Let \( \mathtt{G}_\varepsilon  \coloneqq K \star \Ones[ \raisebox{.05em}{\smalldisc} ] \), then for any \( x \in \disc\), we have that
	\begin{equation}
	\label{eq:Disc_Exact_G}
		\mathtt{G}_\varepsilon (x) = \frac{1}{2\pi} \int\limits_{ x - \smalldisc } \log |y| \dif y
		=
		\frac{1}{4}
		\begin{cases}
			\varepsilon^2 (\log \varepsilon^2 - 1) & \text{if } x \in B[0;1-\varepsilon],
			\\
			\dfrac{1}{\pi} (\pi - \varphi) \varepsilon^2 ( \log \varepsilon^2 - 1) + H\big(\|x\|,\varphi\big)  
			& \text{if } 
			\|x\| \in (1-\varepsilon, 1],
		\end{cases}
	\end{equation}
	where \(\theta \coloneqq \arccos \left( \frac{1 - \|x\|^2 - \varepsilon^2}{2 \|x\|\varepsilon} \right) \) and \( H: [1-\varepsilon, 1] \times [0,2\pi] \to \R\) is given by
	\begin{align*}
		H(a,\theta) &= \left. \frac{2}{\pi} \left[
		 \Im \big( \operatorname{Li}_2 (-a e^{2i\phi}) \big)
		 + (1-a^2) \left[ \phi - \frac{1}{2}  \arctan\left(\frac{ a \sin 2\phi}{1 + a \cos 2\phi} \right)  \right]
		+ a ( 1- \log \varepsilon ) \sin 2\phi \right] 
		\right|_{\phi = \Phi(\theta) } - (1-a^2),
	\end{align*}
	with \( \Phi(\theta) = \frac{1}{2} \arccos L(\varphi) \) and 
	\(
	L(\theta) = \frac{ \varepsilon^2  - 1 - a^2}{ 2 a }
	\).
\end{lemma}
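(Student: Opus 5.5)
The plan is to write $x - \raisebox{.05em}{\smalldisc}$ explicitly and integrate $\frac{1}{2\pi}\log|u|$ over it in polar coordinates centred at the origin. Since $\disc = B_2(0;1)$ is symmetric, $x - \disc = B_2(x;1)$ and $x - B_2(x;\varepsilon) = B_2(0;\varepsilon)$. Hence
\[
	x - \raisebox{.05em}{\smalldisc} = B_2(0;\varepsilon) \cap B_2(x;1).
\]
The integrand depends only on $r=|u|$, so the computation reduces to identifying, for each polar angle $\psi$ (measured from the direction of $x$), the radial extent of this lens-shaped set.

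\emph{Interior case.} If $\|x\| \le 1-\varepsilon$, then $B_2(0;\varepsilon) \subseteq B_2(x;1)$ and the set is the full disc. Thus
\[
	\mathtt{G}_\varepsilon(x) = \frac{1}{2\pi}\int_0^{2\pi}\!\!\int_0^\varepsilon r\log r \dif r\dif\psi = \frac{\varepsilon^2}{2}\log\varepsilon - \frac{\varepsilon^2}{4} = \frac14\varepsilon^2(\log\varepsilon^2-1),
\]
which is the first branch of \cref{eq:Disc_Exact_G}.

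\emph{Boundary case.} Let $a=\|x\|\in(1-\varepsilon,1]$. The circles $|u|=\varepsilon$ and $|u-x|=1$ meet where $1 = a^2+\varepsilon^2-2a\varepsilon\cos\psi$. By the law of cosines, this fixes the critical angle $\theta$ (up to the convention $\psi\mapsto\pi-\psi$ used in the statement).
\begin{enumerate}
	\item For directions inside the sector of half-width $\theta$ around the axis of $x$ (in the statement's convention, an angular measure $2(\pi-\varphi)$), the ray stays in $B_2(x;1)$ up to $r=\varepsilon$. Repeating the interior computation on this sector gives the term $\frac{1}{4\pi}(\pi-\varphi)\,\varepsilon^2(\log\varepsilon^2-1)$.
	\item For the remaining directions, the ray exits through the unit circle at
	\[
		r(\psi) = a\cos\psi+\sqrt{1-a^2\sin^2\psi} < \varepsilon,
	\]
	and this part contributes $\frac{1}{2\pi}\int \big[\tfrac{r^2}{2}\log r - \tfrac{r^2}{4}\big]_{r=r(\psi)}\dif\psi$.
\end{enumerate}

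The main obstacle is evaluating this last integral in closed form, and the plan is to avoid the square root. Rather than working in $\psi$, I would apply the divergence theorem with the field $\mathbf F(u) = u\big(\tfrac12\log|u|-\tfrac14\big)$, which satisfies $\dive \mathbf F = \log|u|$. This converts the region integral into a boundary integral over the arc of $\partial B_2(x;1)$ lying inside $B_2(0;\varepsilon)$, plus the arc of $|u|=\varepsilon$, whose flux is the sector term above. On the unit-circle arc I parametrise $u = x + e^{i\beta}$, written with the half-angle $\phi$ so that $|u|^2 = |1+a e^{2i\phi}|^2$ after rotation. The normal flux then becomes a trigonometric polynomial in $\phi$ times $\log|1+ae^{2i\phi}| = \Re\log(1+ae^{2i\phi})$.

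I would integrate term by term using the series $\log(1+z)=\sum_k(-1)^{k+1}z^k/k$. The pure logarithmic term integrates to $\Im\operatorname{Li}_2(-ae^{2i\phi})$. The products with $\cos 2\phi$ and $\sin 2\phi$ give, after integration by parts, the elementary pieces $(1-a^2)\big[\phi - \tfrac12\arctan\frac{a\sin2\phi}{1+a\cos2\phi}\big]$ and $a(1-\log\varepsilon)\sin 2\phi$. Evaluating at the endpoint $\phi=\Phi(\theta)$, which corresponds to the intersection point through $\cos 2\phi = L$, and using symmetry about the $x$-axis to double, should yield $H(\|x\|,\varphi)$. The constant $-(1-a^2)$ is fixed by the value at $\phi=0$, i.e.\ the ray opposite to $x$. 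Finally, I would check continuity at $a=1-\varepsilon$, where $\varphi\to\pi$ and the two branches of \cref{eq:Disc_Exact_G} must agree; this serves as a consistency test of the constants.
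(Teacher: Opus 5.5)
The paper gives no proof of this lemma. It only says the formula comes from analysing the intersection of the two discs, and defers to its companion reference. So there is no route to compare with.

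On its own merits your plan is sound. It reproduces the stated formula once the statement's notation is read with $\varphi\equiv\theta$ and $\Phi=\tfrac12\arccos L$, which depends only on $a=\|x\|$ and $\varepsilon$. The following pieces are all correct:
\begin{itemize}
	\item the reduction to $B_2(0;\varepsilon)\cap B_2(x;1)$;
	\item the interior branch;
	\item the sector term of angular measure $2(\pi-\theta)$;
	\item the field $\mathbf F(u)=u(\tfrac12\log|u|-\tfrac14)$.
\end{itemize}
Take $x=(a,0)$ and $u=x+e^{2i\phi}$ on the unit-circle arc. The flux density is then $(1+a\cos2\phi)(\tfrac12\ell-\tfrac14)$, with $\ell(\phi)=\log|1+ae^{2i\phi}|$. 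Differentiation confirms that
\[
	I(\phi)=-\tfrac14\Big[\Im\operatorname{Li}_2(-ae^{2i\phi})+(1-a^2)\big(\phi-\tfrac12\arg(1+ae^{2i\phi})\big)+a\big(1-\ell(\phi)\big)\sin2\phi\Big]
\]
is an antiderivative. At $\cos2\Phi=L$ one has $\ell=\log\varepsilon$, which gives exactly the bracket in $H$.

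The step that fails as written is your identification of the second endpoint. In this parametrisation $\phi=0$ is the point $x+x/\|x\|$. That is the point of the circle \emph{farthest} from the origin ($|u|=1+a$), not the ray opposite to $x$.

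The arc lying in $B_2(0;\varepsilon)$ is $\{\cos2\phi\le L\}$, i.e.\ $\phi\in[\Phi,\pi-\Phi]$. It is symmetric about $\phi=\pi/2$, and $\phi=\pi/2$ is the point $x-x/\|x\|=(a-1,0)$ on the ray opposite to $x$.

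Since $I(0)=0$, evaluating at $\phi=0$ cannot generate $-(1-a^2)$. Integrating over $[0,\Phi]$ does not help either: that is the complementary arc, which is not part of the boundary of the region. The correct bookkeeping is
\[
	\frac{1}{2\pi}\cdot 4\int_\Phi^{\pi/2}I'(\phi)\dif\phi=\frac{2}{\pi}\big[I(\tfrac\pi2)-I(\Phi)\big],\qquad I(\tfrac\pi2)=-\tfrac{\pi}{8}(1-a^2).
\]
The value of $I(\tfrac\pi2)$ follows because $\operatorname{Li}_2(-ae^{i\pi})=\operatorname{Li}_2(a)$ is real, $\arg(1-a)=0$, and $\sin\pi=0$. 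This gives $-\tfrac14(1-a^2)+\tfrac{1}{2\pi}[\cdots]_{\phi=\Phi}=\tfrac14H$.

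The same convention slip affects your continuity check at $a=1-\varepsilon$. There the statement's angle satisfies $\cos\theta=\frac{1-a^2-\varepsilon^2}{2a\varepsilon}=1$, so $\theta\to0$; it is your law-of-cosines angle that tends to $\pi$. Meanwhile $L=-1$ gives $\Phi=\pi/2$ and $H=0$, so the branches agree. With $\varphi\to\pi$, as you wrote, the first term would vanish and the branches would not match.
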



Notice that the error estimate from \cref{th:QualityApprox} still holds as two main properties from \cref{lem:G_analytical_expression} also apply to \cref{eq:G_Disc}; i.e., \( \mathtt{G}_\varepsilon(x) \) is strictly negative and symmetric. In particular, the monotony of \(\log |r|\) allows us to conclude that the disc \( B_2(0;1-\varepsilon)\) forms the set of global minima of \( \mathtt{G}_\varepsilon\). In particular, the first branch of \cref{eq:Disc_Exact_G} provides the relation
\( \mathtt{G}_\varepsilon (x) \sim O( \varepsilon^2 \log \varepsilon )
\).
Since \cref{eq:CH_Conv_Neighbourhood_eps_min} is a lower bound of \cref{eq:G_Disc}, it thus yields an upper bound on the approximation error of \( K\star \rho\).

\subsection{Partition method}

\begin{figure}[!ht]
\begin{center}
	\includegraphics[scale=0.6, page=1]{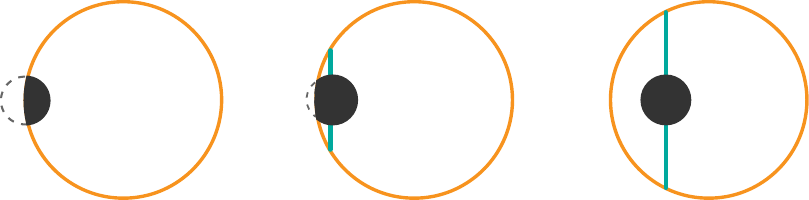}
	\caption{Proposed partition method for a disc based on a neighbourhood around a collocation point.}
\label{fig:MS_Disc}
\end{center}
\vspace{-0.5\baselineskip}
\end{figure}

Considering rotational symmetry, we can centre \(B_2(x;\varepsilon)\) at the horizontal axis, maybe after a rotation. We propose a partition scheme where we trace a vertical line passing through the poles of the \(\varepsilon\) ball, whenever these are included in the intersection. Thus, any partition of \(\ddisc\) can be cast as one of the three cases depicted in \cref{fig:MS_Disc}. From left to right, the first case occurs whenever  \( \operatorname{dist}(x,\partial\disc) = 0\). Here we obtain a shape that resembles a crescent moon, which we call a \embh{moon gap}. The second case occurs whenever \( \operatorname{dist}(x,\partial\disc) \in (0,\varepsilon]\), where we obtain three shapes: one moon gap with two straight edges and two triangular-resembling bits, which we call \embh{lune shards}. Finally, when \raisebox{.05em}{\smalldisc} is compactly contained inside \(\disc\), i.e., \( \operatorname{dist}(x,\partial\disc) \in (\varepsilon,1]\), we obtain a partition involving two moon gaps with straight edges. 


\begin{figure}[h!]
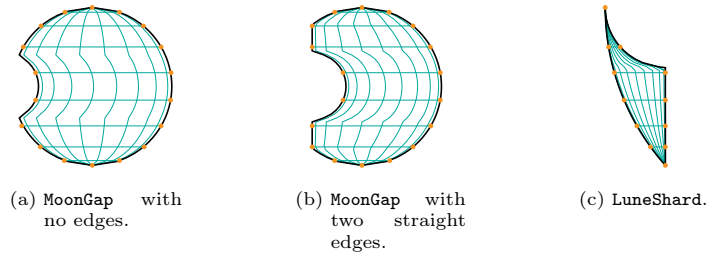

\centering
\captionsetup[subfloat]{format=hang,singlelinecheck=false}
    \subcaptionbox{\texttt{MoonGap} with no edges.}{
    \includegraphics[scale=0.6, page=2]{Graphics/MS_MoonGap.pdf}
	\vspace{0.25em}
	}
	\hspace{1.25cm}
    \subcaptionbox{\texttt{MoonGap} with two straight edges.}[2.25cm]{
    \includegraphics[scale=0.6, page=3]{Graphics/MS_MoonGap.pdf}
	\vspace{0.25em}
	}
	\hspace{1.25cm}
    \subcaptionbox{\texttt{LuneShard}.}[1.6cm]{
    \includegraphics[scale=0.6, page=4]{Graphics/MS_MoonGap.pdf}
	\vspace{0.25em}
	}
	%
	%
    \caption{New shapes added to MultiShape for disc partition.}
\label{fig:New_Shapes}
\end{figure}

To use this partition in conjunction with MultiShape, we created two new shapes: \texttt{MoonGap} and \texttt{LuneShard}. Both depend on the distance between the centres of the two discs \( d\) and a sense parameter \(s\) determining the direction (left/right or up/down) of the shape with respect to the vertical line that divides \(\disc\), if present.

\begin{remark}
	Observe that tracing two parallel lines from the intersection points between discs yields a partition of \(\disc\) consisting of two circular segments and at least one additional set that resembles a portion of the moon gap or the lune shard. We tested this additional partition scheme and obtained similar results to those of \cref{App:MinPart_MS}.
\end{remark}

\begin{figure}[!ht]
\centering
	\includegraphics[scale=0.98]{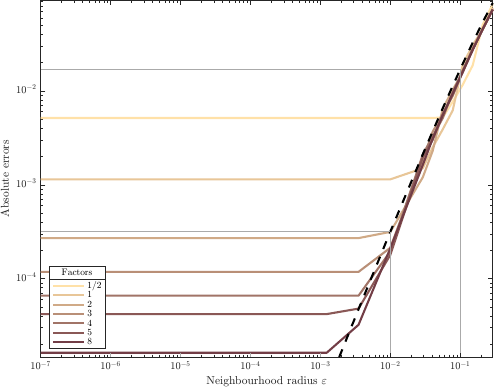}
	%
	%
    \caption{
	Evolution (in log-log scale) of the absolute error (vertical axes) as \(\varepsilon\) varies in \( [10^{-7}, \sfrac{3}{10}]\) (horizontal axes). The dashed line is given by the maximum of \( |\mathtt{G}_\varepsilon|\); see \cref{eq:Disc_Exact_G}.
    }
\label{fig:Disc_Error}
\vspace{-1\baselineskip}
\end{figure}


Replicating some of our tests from \cref{ssec:Quality_test}, we present in \cref{fig:Disc_Error} an analogue of panel (b) in \cref{fig:Multishape_Maximal_Q_MaxError}. Here, we computed the absolute error of computing \( \mathtt{J}(x) \) against the numerical approximation of 
the integral of \( K(x-\, \cdot) \) over \({\ddisc}\)
using the proposed partition method. We computed the absolute difference between these quantities as \(\varepsilon\) decreases for seven scaling factors \( \alpha \in \{ \sfrac 1 2, 1, 2,3,4,5,8 \}\).
Once more, we see that the approximated integral inside the multishape 
quickly approaches \(|\mathtt{G}_\varepsilon|\) for moderate values of \(\varepsilon\). Increasing the scaling factor reduces the correction term's impact for smaller values. Thus, we can select a suitable pair \( (\varepsilon,\alpha)\) to significatively reduce the error term while also reducing the contribution of \( \raisebox{.05em}{\smalldisc} \) as much as possible.

\subsection{Numerical tests}

We assembled approximate convolution kernels based on the following parameter configurations:
\(
	(N, \alpha) \in 
	\big\{ (20, 4), (40, 2), (60, \sfrac{3}{2}) \big\},
\)
and fixing \(\varepsilon = \num{e-3}\).
Using the resulting kernels, we computed solutions of \cref{sys:NL-CH_Particular}, employing the scaled kernel \(K_{\eta}\) defined in \cref{eq:Nonlocal_Energy_NP}, with
\(
\eta \in \{100, 50, -10, -20\}
\). We also set \(T=100\) to capture any long-time behaviours.

The simulations were carried out for the following two initial conditions:
\begin{equation}
\label{eq:App_C_Discs_Initial}
\begin{aligned}
	\rho_{0}^{(1)} (x) &= \frac{1}{2} G\del{ \frac{x_1}{2}, \frac{x_2}{2}, 0 } - \frac{1}{10},
	\qquad\text{where}\qquad
	G(x_1,x_2;a) \coloneqq \exp \del{ -\frac{1}{0.05^2} (x_1^2 + x_2^2 - a)^2},
	\\
	\rho_{0}^{(2)} (x) &= 
	\sqrt{ 1 - x_1^2 - x_2^2 } + \frac{1}{2} \del[1]{ G(x_1,x_2;1) + G(x_1,x_2;-1) - 1 }
	.
\end{aligned}
\end{equation}
The first condition, \(\rho_{0}^{(1)}\), labelled \embh{sombrero}, satisfies the prescribed boundary data by construction. In contrast, the second condition, \(\rho_{0}^{(2)}\), labelled \embh{bowler}, provides continuous initial data with reduced regularity at the boundary.

\begin{figure}[ht]
\centering
\subcaptionbox{
    	Initial condition and final values corresponding to $\rho(x;0) = \rho_0^{(1)}(x)$.
	\vspace{0.5em}
    }
    {
    \includegraphics[scale=1.4, page = 2]{Graphics/Disc/Hats}
    }
\\[0.5\baselineskip]
    \subcaptionbox{
	Initial condition and final values corresponding to $\rho(x;0) = \rho_0^{(2)}(x)$.
	\vspace{0.5em}
    }
    {
    \includegraphics[scale=1.4, page = 1]{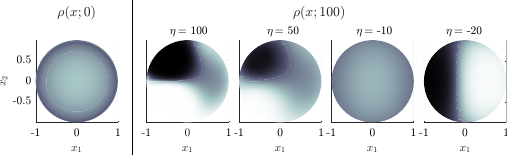}
    }
\caption{Final states of \(\rho(x;t)\) starting from the initial conditions \cref{eq:App_C_Discs_Initial} with \( \eta \in  \{10, 5, -1, -2\} \times 10 \).}
\label{fig:App_C_Sols_60}
\vspace{-0.5\baselineskip}
\end{figure}

Similar to the case of the box, the obtained solutions were consistent across mesh refinements and underwent a regularisation and specialisation phase that minimised the nonlocal energy associated with the kernel $K_\eta$. In \cref{fig:App_C_Sols_60}, the terminal states \( \rho(T=100)\) are depicted for the \(N=60\) multishapes. Following the same colour convention as in the box, values close to \(-1\) correspond to near-black shades, while values close to 1 appear as the brightest areas. Intermediate grayscale tones represent values around zero. In all cases but \( \eta = -10\), we observe the phase separation property that either resembles a step function over a quadrant or a half plane.

\section{Solution on the cube}\label{App:Cube}

The pseudospectral \texttt{2DChebClass} package has a three-dimensional extension, developed in the same spirit as its two-dimensional counterpart. It provides Chebyshev--Gauss--Lobatto collocation grids together with the associated differentiation, interpolation, and integration operators on the cube. Since the multishape framework has not yet been extended to the three-dimensional class, all computations in this appendix are performed directly on the cube without geometric splitting. This choice allows us to obtain high-quality solutions to the nonlocal system \cref{PDS:NL-CH} without an explicit singular kernel.


The three-dimensional Newtonian potential is given by
\begin{equation}\label{eq:Newtonian_potential_3D}
	K: \R^3 \setminus\{0\} \ni x  \longmapsto  -\frac{1}{4\pi \|x\|}  \in \R_{< 0}.
\end{equation}
The kernel \(K\) is radially symmetric and differentiable, with a weak and integrable singularity at the origin. The admissibility conditions (see \cref{Re:AlternativeAdmissibility}) follow directly from the three-dimensional analogue of \cref{lem:Suitability_of_Kernel}.

To approximate the quadrature values of the convolution, we introduce the regularised kernel
\[
	K_\sigma : \R^3 \setminus\{0\} \ni x  \longmapsto 
	-\frac{1}{4\pi \max\{ \sigma, \|x\| \} }
	\in \R_{< 0}.
\]
A simple choice for \(\sigma\) is to select a value smaller than the minimum distance between any two collocation points. Here we recall that, for a line of \(N\) Chebyshev--Gauss--Lobatto points, the smallest distance between two pseudospectral points corresponds to \( \Delta x = 1 - \cos ( \sfrac{\pi}{(N-1)} ) \), and this is replicated in the Chebyshev cube. 

The next result shows us the quality of approximating the convolution action of \cref{eq:Newtonian_potential_3D} with $K_\sigma$:
\begin{theorem}\label{th:QualityApprox_Cube}
	Let \(\rho \in {L^\infty(\text{\faCube})} \) be bounded inside \([0, \nu]\) for some \(\nu > 0\). 
	Also, let \( \sigma = \kappa \Delta x\), with \( \kappa \in (0,1)\), and \( p \in [1,\infty]\). Then
	\begin{align}
		\norm{ K\ast \rho - K_\sigma \ast \rho }_{L^p (\text{\faCube})}
		\leq \frac{1}{6} \nu \sigma^2 8^{\sfrac 1 p}
		\leq
		\frac{\nu \kappa^2}{24} \frac{\pi^4}{ (N-1)^4 } 8^{\sfrac 1 p} \sim O(N^{-4}).
		\label{eq:Approx_Error_Estimate}
	\end{align}
\end{theorem}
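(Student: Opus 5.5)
The plan is to bound the convolution of the kernel difference pointwise and then integrate over the cube. Set \(D \coloneqq K - K_\sigma\). By definition of \(K_\sigma\), the two kernels coincide whenever \(\|x\| \geq \sigma\), so \(D\) is supported in the closed Euclidean ball \(B_2(0;\sigma)\). On that ball
\[
	D(x) = -\frac{1}{4\pi\|x\|} + \frac{1}{4\pi\sigma} \leq 0 .
\]
As in \cref{eq:Convolution_CH_Extension}, I interpret the convolution on \(\text{\faCube}\) through the zero extension of \(\rho\). Linearity then gives \(K\ast\rho - K_\sigma\ast\rho = D\ast\rho\).

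The first step is to compute \(\|D\|_{L^1(\R^3)}\) exactly in spherical coordinates. Using radial symmetry,
\[
	\|D\|_{L^1}
	= \int_0^\sigma \frac{1}{4\pi}\Big(\frac{1}{r} - \frac{1}{\sigma}\Big)\, 4\pi r^2 \dif r
	= \int_0^\sigma \Big(r - \frac{r^2}{\sigma}\Big) \dif r
	= \frac{\sigma^2}{2} - \frac{\sigma^2}{3}
	= \frac{\sigma^2}{6}.
\]
The second step is a pointwise bound. Since \(0 \leq \rho \leq \nu\) almost everywhere, for every \(x\) we get
\[
	|(D\ast\rho)(x)| \leq \int_{\R^3} |D(x-y)|\,|\rho(y)| \dif y \leq \nu\, \|D\|_{L^1} = \frac{\nu\sigma^2}{6} .
\]
This is essentially the same estimate as in the proof of \cref{th:QualityApprox}, or equivalently Young's inequality with exponent pair \((1,\infty)\).

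The third step integrates over \(\text{\faCube} = B_\infty(0;1)\), which has Lebesgue measure \(8\). This gives
\[
	\|D\ast\rho\|_{L^p(\text{\faCube})} \leq 8^{1/p}\, \|D\ast\rho\|_{L^\infty(\text{\faCube})} \leq \frac{1}{6}\,\nu\sigma^2\, 8^{1/p},
\]
with the convention \(8^{1/\infty} = 1\). This is the first inequality of \cref{eq:Approx_Error_Estimate}.

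For the second inequality, I substitute \(\sigma = \kappa\Delta x\) with \(\Delta x = 1 - \cos\big(\pi/(N-1)\big)\). I use the elementary bound \(1 - \cos t \leq t^2/2\), which follows from Taylor's theorem or from \(1 - \cos t = 2\sin^2(t/2)\) together with \(|\sin s| \leq |s|\). This yields \(\sigma^2 \leq \kappa^2\pi^4/\big(4(N-1)^4\big)\). Inserting this gives \(\frac{\nu\kappa^2}{24}\frac{\pi^4}{(N-1)^4}8^{1/p}\), which is \(O(N^{-4})\).

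No step is a genuine obstacle. The only point needing care is to confirm that the support of \(D\) is exactly the Euclidean ball, so that the radial integral is legitimate. For \(\sigma < 1\) this ball lies well inside the difference set \(\text{\faCube} - \text{\faCube}\), so no truncation of \(D\) by the domain geometry can enlarge the bound. Truncation can only shrink the integral of \(|D|\), so the estimate remains an upper bound for every \(x\), including points near the boundary of the cube.
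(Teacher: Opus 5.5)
Your proposal is correct and follows the paper's proof: you compute $\int_{\R^3}|K-K_\sigma| = \sigma^2/6$ in spherical coordinates, deduce the pointwise bound $\nu\sigma^2/6$, and use $\Delta x \le \pi^2/(2(N-1)^2)$. You also spell out two steps the paper leaves implicit: the factor $8^{1/p}$, which comes from the cube having measure $8$, and the inequality $1-\cos t \le t^2/2$.
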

\begin{proof}
	The result follows from a standard truncation argument. Observe that, by definition, \(K - K_\sigma\) is zero except on the Euclidean neighbourhood \( \|u\| \leq \sigma\). There, a transformation to spherical coordinates yields
	\[
		\int\limits_{\R^3} \abs{ K(u) - K_\sigma (u) } \dif u = \frac{1}{4\pi} \int\limits_{B_2(0;\sigma)} \frac{1}{\|u\|} - \frac{1}{\sigma} \dif u
		=
		\int\limits_{0}^\sigma r - \frac{r^2}{\sigma} \dif r = \frac{\sigma^2}{6}.
	\]
	Then, for any \( x \in \text{\faCube}\), we can obtain the general bound
	\[
		\abs{ K\ast \rho (x) - K_\sigma \ast \rho (x) }
		\leq \| \rho \|_{L^\infty} \hspace{-0.5em} \int\limits_{x - \text{\faCube}} \abs{ K(u) - K_\sigma (u) } \dif u
		\leq \nu \int\limits_{\R^3} \abs{ K(u) - K_\sigma (u) } \dif u = \frac{\nu \sigma^2}{6}.
	\]
	The final approximation error follows from the upper bound \( \Delta x \leq \sfrac{\pi^2}{2(N-1)^2} \).
\end{proof}

The value of \cref{eq:Approx_Error_Estimate} decreases as \(N\) grows. However, each pseudospectral matrix (e.g., differentiation, interpolation, or convolution) is of size \( (3N)^3\). Consequently, only relatively small values of \(N \sim 20\) are computationally feasible. The rôle of \(\sigma\) is two-fold. On the one hand, taking a small \(\sigma\) guarantees that the discretised convolution matrix becomes a column-diagonally dominant matrix. On the other hand, selecting \(\sigma\) to be an arbitrarily small value significantly dampens the effects of the other entries of the matrix by spectrally reducing the action of the kernel to the product of the regularised diagonal values and the quadrature weights. An immediate effect is the loss of long-range effects, which are at the core of employing the nonlocal terms. Thus, selecting \(\sigma\) to be of the same order of magnitude as \(\Delta x\) serves as a practical heuristic for preserving long-range effects while simultaneously reducing the approximation error.

\begin{figure}[ht]
\centering
    {
    \includegraphics[scale=1.4, page = 1]{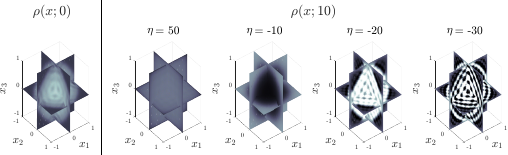}
    }
\caption{Final states \(\rho(x;T)\) obtained from the adapted initial condition \cref{eq:App_C_Discs_Initial}, for \( \eta \in \{5, -1, -2, -3\} \times 10 \).}
\label{fig:App_D_Sols_20}
\vspace{-0.5\baselineskip}
\end{figure}

We performed a numerical test using the scaled kernel \( K_\eta \coloneqq \eta K_\sigma\). We fixed \( \sigma = (\sfrac{1}{2}) \Delta x\) with \(N = 20\) and considered the values \( \eta \in \{50,-10,-20,-30\}\). As an initial condition, we extended the two-dimensional bowler \( \rho_0^{(2)} \) from \cref{eq:App_C_Discs_Initial} to employ a three-dimensional Gaussian. To capture the long-time behaviour of the system, we set \(T=10\).
The results of the numerical integration are presented in \cref{fig:App_D_Sols_20}, where we compare the initial condition with the final state of the solution \( \rho(x;T)\). Each panel in the plot shows cross-sectional intensity slices of \(\rho\) taken through the origin. Following the colour convention of the previous sections, values close to \(-1\) correspond to near-black shades, while values close to \(1\) appear as the brightest areas. Intermediate grayscale tones represent values around zero. We observe that for positive scalings the kernel diffuses, leading \(\rho(T)\) to become nearly constant, whereas for negative values of \(\eta\) the solution undergoes a separation and specialisation phase.



\end{appendices}

\end{document}